\documentclass{siamart1116}


\usepackage{lipsum}
\usepackage{amsfonts}
\usepackage{graphicx}
\ifpdf
  \DeclareGraphicsExtensions{.eps,.pdf,.png,.jpg}
\else
  \DeclareGraphicsExtensions{.eps}
\fi

\numberwithin{theorem}{section}


\newtheorem{example}{Example}[section]
\newtheorem{assumption}{Assumption}[section]
\newtheorem{definition}{Definition}[section]
\newtheorem{proposition}{Proposition}[section]
\newtheorem{lemma}{Lemma}[section]
\newtheorem{remark}{Remark}[section]

\newcommand{\TheTitle}{Solvability of the non-linear Dirichlet problem with Integro-Differential operators} 
\newcommand{\RunningTitle}{Dirichlet problem}
\newcommand{\TheAuthors}{E. Bayraktar and Q. Song}

\headers{\RunningTitle}{\TheAuthors}

\title{{\TheTitle}
\thanks{Submitted to the editors in May 2017.}
}

\author{
  Erhan Bayraktar\thanks{University of Michigan
    (\email{erhan@umich.edu}). Erhan Bayraktar is partially supported by the National Science Foundation (DMS-1613170) and the Susan M. Smith Professorship.}
  \and
  Qingshuo Song\thanks{City University of Hong Kong (\email{song.qingshuo@cityu.edu.hk}).}
}

\usepackage{amsopn}


\ifpdf
\hypersetup{
  pdftitle={\TheTitle},
  pdfauthor={\TheAuthors}
}
\fi

\numberwithin{equation}{section}




\begin{document}

\maketitle

\begin{abstract}
  This paper analyzes the solvability of a class of elliptic non-linear Dirichlet problems with jumps. 
The contribution of the paper is the construction of the supersolution required in Perron's method. This is achieved by solving the exit time problem of an It\^{o} jump diffusion. The proof of this relies on the proof of continuity of the entrance time and point with respect to the 
 Skorohod topology. 
\end{abstract}

\begin{keywords}
   Boundary value problem, Skorohod topology,
Integro-Differential equation, 
Viscosity solution, L\'{e}vy process, Stochastic exit control problem.
\end{keywords}

\begin{AMS}
60H30, 47G20,  93E20, 60J75, 49L25, 35J60, 35J66
\end{AMS}

\section{Introduction}
\subsection*{Problem setup}
Consider an equation of the form
\begin{equation}
 \label{eq:pde11}
   F (u,  x)  + u(x) - \ell(x)
 = 0, \ x \in O
\end{equation}
with the boundary value 
\begin{equation}
 \label{eq:bd11}
 u(x) = g(x), \ x\in O^{c}.
\end{equation}
Here
$$F(u,x) = - \inf_{a\in [\underline a, \overline a]} H(u,x,a) - \mathcal I(u,x) $$ 
where  $\underline a \le \overline a$ are given two real numbers and
\begin{equation}
 \label{eq:I01}
  \mathcal I(u,x) = \int_{\mathbb R^{d}} (u(x+y) - u(x) - D u(x) \cdot y I_{B_{1}}(y)) \nu(dy),
\end{equation}
$$H(u,x,a) = \frac 1 2 tr(A(a) D^{2} u (x) ) + b(a) \cdot Du (x),$$
 with $A(a) = \sigma'(a) \sigma(a)$, and $\nu(\cdot)$ is a L\'{e}vy measure on $\mathbb R^{d}$, i.e. 
 $\int_{\mathbb R^{d}} (1 \wedge |y|^{2}) \nu(dy) <\infty$. Here,
$B_{r}(x)$ is a ball of radius $r$ with center $x$, and
we denote $B_{r}(0)$ by $B_{r}$ for simplicity.
To simplify our presentation, we will use the following additional set of assumptions throughout the paper.
\begin{assumption}
 \label{a:01v5}
\begin{enumerate}
 \item $O$ is a connected open bounded set in $\mathbb R^{d}$.
 \item \label{a:011v5}$\sigma, b\in C^{0,1}(\mathbb R)$; $\ell, g\in C_{0}(\mathbb R^{d})$.
 \item \label{a:012v5}  $\nu(dy) =  \hat \nu(y) dy$ is a L\'{e}vy measure satisfying $\hat \nu \in C_{0}(\mathbb R^{d} \setminus \{0\})$.
\end{enumerate}
\end{assumption}
For some $\alpha \in (0,2)$,  if $\nu$ is given by
$$\nu (dy) = \frac{dy}{|y|^{d+\alpha}},$$ then $\nu$ satisfies Assumption \ref{a:01v5}, and the integral operator is denoted by 
 $\mathcal I (u,x) = - (-\Delta)^{\alpha/2} u(x)$ as convention. For convenience, we write 
 $- (-\Delta)^{0} u = 0$.

\subsection*{Literature review and a motivating example}
A function $u$ is said to be a solution of  
Dirichlet problem \eqref{eq:pde11}-\eqref{eq:bd11}, 
if $u\in C(\bar O)$ 
satisfies \eqref{eq:pde11} in the viscosity sense in $O$ 
and $u = g$ on $O^{c}$.
It is worth to note that, as far as Dirichlet problem \eqref{eq:pde11}-\eqref{eq:bd11} concerned, one can generalize the boundary condition
\eqref{eq:bd11} by 
\begin{equation}
 \label{eq:bd12}
 \max\{F (u,  x)  + u(x) - \ell(x), u - g\} \ge 0 \ge \min\{F (u,  x)  + u(x) - \ell(x), u - g\} \hbox{ on } O^{c}
\end{equation}
without loss of uniqueness in the viscosity sense. 

In contrast to the (classical) 
Dirichlet problem \eqref{eq:pde11}-\eqref{eq:bd11},
Dirichlet problem \eqref{eq:pde11}-\eqref{eq:bd12} is referred to a
generalized Dirichlet problem. 
For the generalized Dirichlet problem without nonlocal operator, there were
many excellent discussions on  the solvability with the
comparison principle and Perron's method, see for instance, 
\cite{BP88}, \cite{BP90}, \cite{BB95}, and 
Section 7 of \cite{CIL92}.
Also see  \cite{FS06} and \cite{MR2763549} for an analysis of this using the dynamic programming principle. Recently, the solvability result 
has been extended to nonlinear equations associated to 
Integro-differential operators, 
see \cite{BI08}, \cite{BCI08}, \cite{AT96},
\cite{Top14}, and the references therein.

Compared to the generalized Dirichlet problem, there are relatively less 
discussions available on the classical Dirichlet problem associated with 
the Integral operators in the aforementioned references. The following example motivates our analysis:
\begin{example}\label{e:11}
Determine the existence and uniqueness of the viscosity solution for the Dirichlet problem  given by, 
\begin{equation}
 \label{eq:toy1}
 | \partial_{x_{1}} u | + (-\Delta)^{\alpha/2} u + u - 1 = 0, \ \forall x\in O = (-1,1) \times (-1, 1),
\end{equation}
 where $\alpha \in [0,2]$, with the boundary condition
 $$ u(x) = 0, \ \forall x\in O^{c}.$$
 \end{example}

This problem is only partially resolved  in the existing literature:
\begin{itemize}
 \item If $\alpha = 0$, there is no solution. In fact, one can directly check that
 $u(x) = 1 - e^{-1+|x_{1}|}$ is the unique solution of the generalized Dirichlet problem, but not a solution of classical 
 Dirichlet problem due to its loss of boundary
 at $\{(x_{1}, x_{2}): |x_{2}| = 1, |x_{1}| < 1\}$.
 \item If $\alpha \in [1,2]$, there is a unique solution by \cite{BCI08}.
 \item If $\alpha \in (0,1)$, although there is unique solution of generalized Dirichlet problem by \cite{Top14}, it was not known 
 whether this solution solves the classical Dirichlet problem. Our main result Theorem~\ref{t:05v5} demonstrates that this in fact is the case, see Example \ref{e:11a}. It is also pointed out there that existence and uniqueness still 
holds for all $\alpha \in (0, 2]$
as long as the boundary satisfies exterior cone condition, which itself is a new result.
\end{itemize}

\subsection{Work outline}
This work focuses on the sufficient condition of 
the existence and uniqueness of the viscosity solution for  
Dirichlet problem of \eqref{eq:pde11}-\eqref{eq:bd11}. 

One alternative in proving this result is using the stochastic Perron methodology introduced by \cite{MR2929032, MR3162260, BS13}, and \cite{MR3295681} for the application of this approach to a particular exit time problem. With this methodology one can in fact identify the value function of the exit time control problem with the generalized Dirichlet problem \eqref{eq:pde11}-\eqref{eq:bd12} using a similar analysis to the proofs of Theorems 2 and 3 in \cite{MR3217159}. Then as in \cite{FS06} (also see \cite{MR2763549}), if we can a priori show that the value function is continuous (this can fail at the boundary), we can conclude that the value function also solves the classical Dirichlet problem \eqref{eq:pde11}-\eqref{eq:bd11}.

Since either one needs to prove continuity separately or has to impose a stronger version of the comparison principle as in Theorem 1 of \cite{MR3217159}, we will not pursue the stochastic Perron approach here. We will instead approach this problem using the classical Perron method. Using the idea of constructing a supersolution satisfying the boundary conditions from an auxiliary stochastic exit time  problem as in \cite{MR3295680} (and in \cite{MR3295681} in a slightly different set-up), we will be able to apply \cite{BI08} and obtain a unique viscosity solution. This result, which is the main contribution of the paper, is presented in Theorem \ref{t:05v5}. 

The technical step of the proof of Theorem \ref{t:05v5} involves proving the continuity of the value function of the exit time problem of an It\^{o} jump diffusion, see Proposition~\ref{p:03v5}. 
In general, due to the non-local property, continuity of the value function
up to a stopping time is much more delicate than the 
counterpart of the purely differential form.
We establish this result by investigating the continuity set of the 
of entrance time and entrance point mappings with respect to the Skorohod topology; see Theorems~\ref{t:hit01} and \ref{t:hit02}. Then we show that these sets have full measure under our assumption in the proof of Proposition~\ref{p:03v5}.
It is easy to show that
the continuous sample paths are a subset of the points of lower semi-continuity of the entrance time to a closed interval, see e.g. \cite{MR2846253}. 
However, the continuity set is difficult to identify. In fact, continuity does not hold in general for the entrance time as shown in Appendix \ref{sec:skorohod} (see Example~\ref{e:01}) or Page 657 of \cite{SV72}.  Moreover,
Example~\ref{e:02v5} 
demonstrates that the situation for the continuity of the entrance point mapping is even worse.
Our contribution here is the identification of the discontinuity set as a null set under our assumption about the geometry of the boundary. 

\section{Existence of a unique 
solution for the Dirichlet problem} 


\subsection{Two different definitions of viscosity properties}
In this section, we give two different definitions 
of viscosity properties, Definition \ref{d:01v5} and Definition \ref{d:11} 
respectively. Definition \ref{d:01v5} involves only with $C^{2}$ smooth test functions, which will be used later 
to verify the supersolution property of 
a certain value function associated to some exit control problem. 
Compared to Definition \ref{d:01v5}, Definition \ref{d:11} is given with more test functions including non-smooth functions, and it's much harder to 
be used directly in this paper to verify viscosity solution property.
However, Definition \ref{d:11} of this paper 
is exactly  Definition 2 of \cite{BI08}, 
where  it was used to provide 
the proof of comparison principle and Perron's method.
In this connection, we shall prove  
the equivalence of Definition \ref{d:11} and Definition \ref{d:01v5}.

Definition \ref{d:01v5} below is consistent to the Definition 1 of \cite{BI08},
which will be used to establish the existence of the solution in this paper.
To proceed, for a function $u: \bar O \mapsto \mathbb R$, 
we define its extension by
$$u^{g} = (u I_{\bar O} + g I_{\bar O^{c}})^{*}, \quad 
u_{g} =  (u I_{\bar O} + g I_{\bar O^{c}})_{*},$$
where $f^{*}$ and $f_{*}$ stand for USC (upper semicontinuous) 
and LSC (Lower semicontinuous) envelopes 
of the function $f$, respectively. We also
define the supertest function space, 
for $u \in USC$ and 
$x\in \mathbb R^{d}$
\begin{equation}\label{eq:J+}
 J^{+} (u, x) = \{\phi \in C_{b}^{\infty}(\mathbb R^{d}), 
\hbox{ such that } \phi \ge u^{g} \hbox{ and } \phi(x) = u(x)\}.
\end{equation}
Analogously, the subtest function space is given by, for $u\in LSC$ 
and  $x\in \mathbb R^{d}$ 
\begin{equation}
 \label{eq:J-}
 J^{-} (u, x) = \{\phi \in C_{b}^{\infty}(\mathbb R^{d}), 
\hbox{ such that } \phi \le u_{g} \hbox{ and } \phi(x) = u(x)\}.
\end{equation}

\begin{definition}
 \label{d:01v5}
\begin{enumerate}
 \item   We say  a function $u \in USC(\bar O)$ satisfies the viscosity 
 subsolution property at $x\in  O$, 
 if the following inequality holds,
\begin{equation}
 \label{eq:sub11}
F (\phi,  x)  + u(x) - \ell(x)
 \le 0,  \ \forall \phi \in J^{+} (u,x).
\end{equation}
\item   We say  a function $u \in LSC(\bar O)$
 satisfies the viscosity 
 supersolution property at $x\in  O$, 
 if the following inequality holds,
\begin{equation}
 \label{eq:sup11}
F (\phi,  x)  + u(x) - \ell(x)
 \ge 0, \ \forall \phi \in J^{-} (u,x).
\end{equation}
\end{enumerate}
\end{definition}

Next, we observe that $\phi \mapsto F (\phi,  x)$ of \eqref{eq:sub11} and
\eqref{eq:sup11} could be well defined for a function being $C^{\infty}$-smooth only at some neighborhood of $x$. Indeed, 
for an arbitrary $x\in \mathbb R^{d}$, if we define a function space 
$C_{x}$ by
\begin{equation}
 \label{eq:phiv4}
 C_{x} = \{\phi : \exists \hat r>0, \
 \phi_{1} \in C^{\infty}, \ \phi_{2} \in L^{1}, \hbox{ s.t. } 
 \phi = \phi_{1} I_{\bar B_{\hat r}(x)} + \phi_{2} (1 - I_{\bar B_{\hat r}(x)})\},
\end{equation}
one can directly verify that 
$\phi \mapsto \mathcal I(\phi, x)$ is well defined for $\phi \in C_{x}$, with
a property 
\begin{equation}
 \label{eq:03v5}
 \mathcal I(\phi, x) = b_{r} \cdot D \phi(x) + 
\mathcal I_{r,1} (\phi,x) + \mathcal I_{r,2}(\phi,x),  \ \forall r>0
\end{equation}
where 
\begin{enumerate}
 \item $b_{r} = \int_{B_{1}\setminus B_{r}} y \nu(dy).$
 \item  $\mathcal I_{r,1}(\phi, x) = \int_{B_{r}} (\phi(x+y) - \phi(x) - D \phi(x) \cdot y) \nu(dy)$
 \item 
 $\mathcal I_{r,2} (\phi, x) = \int_{\mathbb R \setminus B_{r}} 
(\phi(x+y) - \phi(x)) \nu(dy)$.
\end{enumerate}
In the above, $\int_{B_{1}\setminus B_{r}}$ for $r>1$ 
is understood as $-\int_{B_{r}\setminus B_{1}}$.
Note that, 
(a) the identity \eqref{eq:03v5} agrees with the original definition \eqref{eq:I01} of $\mathcal I$; (b) 
$r$ in \eqref{eq:03v5} could be larger than $\hat r$ of  \eqref{eq:phiv4}.
This observation allows us to use more 
test functions from $C^{\infty}$ to $C_{x}$ compared to 
Definition \ref{d:01v5}.
In this below, 
Definition \ref{d:11} is consistent to  Definition 2 of \cite{BI08}.

\begin{definition}
 \label{d:11}
 \begin{enumerate}
 \item  We say  a function $u \in USC(\bar O)$ satisfies the viscosity 
 subsolution property at $x\in  O$, if for all  $\phi \in C_{x}$  with
 (1)
 $\phi(x) = u(x)$; (2) $\phi - u \ge 0$ on $\bar O$, satisfies
\begin{equation}
 \label{eq:sub111}
 - b_{r} \cdot D \phi(x) - \mathcal I_{r,1} (\phi,x) - \mathcal I_{r,2} (u^{g},x) 
 - \inf_{a\in [\underline a, \overline a]} H(\phi,x,a) + u(x) - \ell(x) \le 0, \ \forall r>0,
\end{equation}
 \item  We say  a function $u \in LSC(\bar O)$ satisfies the viscosity 
 supersolution property at $x\in  O$, if for all  $\phi \in C_{x}$  with
 (1)
 $\phi(x) = u(x)$; (2) $\phi - u \le 0$ on $\bar O$
\begin{equation}
 \label{eq:sup111}
  - b_{r} \cdot D \phi(x) - \mathcal I_{r,1} (\phi,x) - \mathcal I_{r,2} (u_{g},x) 
 - \inf_{a\in [\underline a, \overline a]} H(\phi,x,a) + u(x) - \ell(x) \ge 0, \ \forall r>0.
\end{equation}
\end{enumerate}
\end{definition}

\begin{proposition}
 \label{p:01v6}
 Definition \ref{d:01v5} is equivalent to Definition \ref{d:11}.
\end{proposition}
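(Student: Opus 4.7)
I plan to prove both implications separately; I focus on the subsolution case, the supersolution case being entirely symmetric (replacing $u^g$ by $u_g$ and approximating from below).

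The easy direction, Definition~\ref{d:11} $\Rightarrow$ Definition~\ref{d:01v5}, is by specialization. Given $\phi \in J^+(u,x)$, one has $\phi \in C_b^\infty \subset C_x$, $\phi(x) = u(x)$, and $\phi \ge u^g \ge u$ on $\bar O$, so $\phi$ is admissible for Definition~\ref{d:11}. Apply the inequality \eqref{eq:sub111} at any $r > 0$ and use the pointwise inequality $\phi \ge u^g$ with equality at $x$ to deduce $\mathcal I_{r,2}(\phi,x) \ge \mathcal I_{r,2}(u^g,x)$. Substituting and rewriting via the split identity \eqref{eq:03v5} yields $F(\phi,x) + u(x) - \ell(x) \le 0$, which is \eqref{eq:sub11}.

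For the harder direction, Definition~\ref{d:01v5} $\Rightarrow$ Definition~\ref{d:11}, fix $\phi$ admissible for Definition~\ref{d:11} and $r > 0$. The idea is to approximate $\phi$ by test functions in $J^+(u,x)$ that agree with $\phi$ locally near $x$ but dominate $u^g$ in the far field. Since $u^g$ is bounded and upper semicontinuous, construct (by sup-convolution plus mollification) a sequence $f_n \in C_b^\infty(\mathbb R^d)$ with $f_n \ge u^g$ and $f_n \downarrow u^g$ pointwise. Pick $R > 0$ so that $2R < \min(r, \hat r, \mathrm{dist}(x, \partial O))$, let $\chi \in C_c^\infty(\mathbb R^d)$ satisfy $\chi = 1$ on $B_R(x)$ and $\chi = 0$ outside $B_{2R}(x)$, and define $\tilde\phi_n := \chi\, \phi_1 + (1-\chi) f_n$. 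Then $\tilde\phi_n \in J^+(u,x)$: it is smooth and bounded, $\tilde\phi_n(x) = \phi_1(x) = u(x)$, and on $B_{2R}(x) \subset O$ we have $u^g = u$ together with $\phi_1 = \phi \ge u$ and $f_n \ge u$, so the convex combination $\tilde\phi_n$ dominates $u^g$ there, while outside $B_{2R}(x)$ we have $\tilde\phi_n = f_n \ge u^g$.

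Applying Definition~\ref{d:01v5} to $\tilde\phi_n$ and noting $D\tilde\phi_n(x) = D\phi(x)$, $D^2\tilde\phi_n(x) = D^2\phi(x)$, $\tilde\phi_n(x) = \phi(x)$ (because $\tilde\phi_n = \phi_1 = \phi$ on $B_R(x)$) gives $F(\phi,x) + u(x) - \ell(x) \le \int [\tilde\phi_n - \phi](x+y)\,\nu(dy)$. Since $\phi = \phi_1$ on $\mathrm{supp}\,\chi \subset B_{\hat r}(x)$, the integrand equals $(1-\chi)(f_n - \phi)$; it vanishes on $B_R(x)$, is uniformly bounded, and $\nu$ is finite on $\{|y|>R\}$, so dominated convergence yields in the limit $F(\phi,x) + \int(1-\chi)(\phi - u^g)\,\nu(dy) + u(x) - \ell(x) \le 0$. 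Decomposing the integral and using $\phi - u^g \ge 0$ on $B_{2R}(x) \setminus B_R(x) \subset O$, one obtains $\int(1-\chi)(\phi - u^g)\,\nu \ge \int_{|y|>r}(\phi - u^g)\,\nu$, from which \eqref{eq:sub111} at $r$ follows via the split \eqref{eq:03v5}.

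The main obstacle is ensuring the chain works for every $r > 0$, not only $r \le \mathrm{dist}(x, \partial O)$: when $r$ exceeds this distance, the annulus $2R \le |y| \le r$ leaves $\bar O$, where no sign information on $\phi - u^g$ is available from the admissibility hypothesis. This is addressed by first observing that for $r \le \hat r$ the quantity \eqref{eq:sub111} depends on $\phi$ only through its local part $\phi_1$, so the preceding argument dispatches the nontrivial local content at all small $r$; to cover the remaining range, one replaces $\phi_2$ by $\max(\phi_2, g)$ (still in $C_x$ and still admissible for Definition~\ref{d:11}, with the local terms of \eqref{eq:sub111} unchanged) to reduce to the case $\phi \ge u^g$ globally, where the left-hand side of \eqref{eq:sub111} is monotone decreasing in $r$, so Definition~\ref{d:11} at small $r$ propagates to all $r$.
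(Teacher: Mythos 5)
Your easy direction is fine, and the core of the hard direction is essentially sound but takes a genuinely different route from the paper. Where you glue the local part $\phi_1$ to a decreasing sequence of smooth majorants $f_n\downarrow u^{g}$ via a cutoff and pass to the limit by dominated convergence, the paper instead shows that $w=\phi I_{\bar B_{r}(x)}+u^{g}I_{\bar B_{r}^{c}(x)}$ belongs to a closure $\bar J^{+}(u,x)$ of the test function space, by a five-step construction (perturb by $\sqrt\epsilon\,|y-x|^{2}$, glue with $\epsilon+u^{g}$, dominate by a continuous function, mollify, then shift down so as to touch $u^{g}$ at nearby points $x_{\epsilon}\to x$), supported by a stability lemma for $\mathcal I$ under local $W^{2,\infty}$ plus $L^{1}(\nu_{x})$ convergence. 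Your argument reaches the same inequality for $r<\mathrm{dist}(x,\partial O)$ with less machinery; note that this is in substance all the paper itself establishes, since its proof also fixes $r=\tfrac12\,\mathrm{dist}(x,\partial O)$ and works with globally smooth test functions.

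Your final step — the extension to all $r>0$ — has a genuine gap: the monotonicity runs the wrong way. Replacing $\phi_{2}$ by $\max(\phi_{2},g)$ yields a test function $\phi'\ge\phi$ that agrees with $\phi$ on $\bar B_{\hat r}(x)$, so for $r>\hat r$ one has $\mathcal I_{r,1}(\phi',x)-\mathcal I_{r,1}(\phi,x)=\int_{B_{r}\setminus B_{\hat r}}(\phi'-\phi)(x+y)\,\nu(dy)\ge 0$, and hence the left-hand side of \eqref{eq:sub111} for $\phi'$ is \emph{smaller} than that for $\phi$. Establishing \eqref{eq:sub111} for $\phi'$ is therefore a weaker statement and does not imply it for $\phi$; this is the usual monotonicity of nonlocal subsolution inequalities, for which enlarging the test function away from the touching point only makes the inequality easier. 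Moreover, the obstacle you are patching is real and cannot be removed by any such modification: once $B_{r}(x)\setminus(\bar O\cup\bar B_{\hat r}(x))$ has positive $\nu_{x}$-measure, $\mathcal I_{r,1}(\phi,x)$ depends on the values of $\phi$ there, which admissibility for Definition~\ref{d:11} leaves unconstrained, so the left-hand side of \eqref{eq:sub111} can be made arbitrarily large by taking $\phi$ very negative off $\bar O$. The passage to all $r$ must instead restrict to radius/test-function pairs for which $\mathcal I_{r,1}$ sees only the constrained part of $\phi$, as in Definition 2 of \cite{BI08} where the radius is tied to the ball on which $u-\phi$ attains its maximum; your write-up leaves this unresolved, and the proposed repair in particular does not close the gap.
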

The proof is relegated to Appendix ~\ref{s:def}.

\subsection{Perron's method} \label{sec:perron}

\begin{definition} \label{d:03}
A function $u\in USC(\bar O)$ {\rm (resp. $u\in LSC(\bar O)$)} is said to be a viscosity subsolution {\rm (resp. supersolution)}  of 
\eqref{eq:pde11} - \eqref{eq:bd11}, if
 $u$ satisfies the  subsolution {\rm (resp. supersolution) } property 
 at each $x\in O$ and $u = g$ at $O^{c}$. A function $u\in C(\bar O)$ is said to be a solution of \eqref{eq:pde11} - \eqref{eq:bd11}, if it is a sub and supersolution of \eqref{eq:pde11} - \eqref{eq:bd11} at the same time.
\end{definition}

\begin{proposition}  [Comparison Principle]
 \label{p:cp}
 If $u$ and $v$ are subsolution and supersolution of \eqref{eq:pde11} - \eqref{eq:bd11}, then $u\le v$.
\end{proposition}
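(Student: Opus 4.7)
My plan is to obtain the comparison principle as a direct corollary of the comparison theorem for integro-differential Dirichlet problems in \cite{BI08}. By Proposition \ref{p:01v6}, the subsolution $u$ and supersolution $v$ in the sense of Definition \ref{d:01v5} are also sub- and supersolutions in the sense of Definition \ref{d:11}, which coincides with Definition 2 of \cite{BI08}. Moreover, Definition \ref{d:03} forces the classical boundary identity $u=v=g$ on $O^{c}$, so the generalized boundary condition \eqref{eq:bd12} is trivially satisfied and the framework of \cite{BI08} applies.

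Once this translation is made, the remaining work is to verify that the structural hypotheses in \cite{BI08} are met under Assumption \ref{a:01v5}: (i) $\sigma(a)$ and $b(a)$ are Lipschitz in $a$ and do not depend on $x$, so the local Hamiltonian $H$ trivially satisfies the required regularity in $x$ uniformly in $a\in[\underline a,\overline a]$; (ii) the coefficient of $u$ in \eqref{eq:pde11} equals $+1$, providing the strict monotonicity in $u$ needed for the doubling-of-variables argument; (iii) $\hat\nu\in C_{0}(\mathbb R^{d}\setminus\{0\})$ together with the L\'evy integrability $\int(1\wedge|y|^{2})\,\nu(dy)<\infty$ gives the required regularity and integrability of the nonlocal kernel; (iv) $\ell,g\in C_{0}(\mathbb R^{d})$ provide bounded continuous data. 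Each of these checks is immediate.

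For completeness, the underlying argument proceeds by contradiction: assuming $M:=\sup_{\bar O}(u-v)>0$, the classical boundary identity forces $M$ to be attained at some $x_{0}\in O$. Standard doubling of variables with penalty $|x-y|^{2}/\varepsilon$ produces maximizers $(x_{\varepsilon},y_{\varepsilon})\to(x_{0},x_{0})$; inserting the resulting quadratic test functions into the sub/supersolution inequalities via the splitting \eqref{eq:03v5} and invoking the nonlocal Jensen--Ishii lemma of \cite{BI08} yields matrix inequalities on the local pieces. The near-zero nonlocal terms $\mathcal I_{r,1}$ vanish as $\varepsilon\downarrow 0$ and then $r\downarrow 0$, while the far-field terms $\mathcal I_{r,2}$ acting on $u^{g}-v_{g}$ are controlled uniformly via continuity of $g$ across $\partial O$. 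The main technical subtlety is precisely this interplay between the nonlocal term and the boundary data; however, since $u$ and $v$ coincide with the same continuous $g$ on $O^{c}$, this is exactly the situation treated in \cite{BI08}, so no additional work is required beyond invoking their theorem.
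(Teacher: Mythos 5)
Your proposal is correct and follows essentially the same route as the paper: both reduce the statement to Theorem 3 of \cite{BI08} by using Proposition \ref{p:01v6} to identify Definition \ref{d:01v5} with Definition 2 of \cite{BI08}. The additional verification of the structural hypotheses and the sketch of the doubling-of-variables argument are reasonable elaborations but not a different approach.
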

\begin{proof}
 Since
Definition \ref{d:01v5} is in fact equivalent to Definition 2 of \cite{BI08} by 
Proposition~\ref{p:01v6}, the statement above follows from the corresponding statement in Theorem 3 of \cite{BI08}.
\end{proof}

\begin{proposition} [Perron's Method]
 \label{p:pm}
If there exist a subsolution $\underline u$ and a supersolution $\overline u$ to  \eqref{eq:pde11} - \eqref{eq:bd11}, then 
 $$w(x) = \inf\{u \in LSC(\bar O) : u \hbox{ is subsolution}\}$$
 is the unique solution in $C(\bar O)$. 
\end{proposition}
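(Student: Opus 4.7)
The plan is to adapt the classical Perron construction to the present nonlocal setting, leaning on the comparison principle (Proposition~\ref{p:cp}) and the equivalence of the two viscosity formulations (Proposition~\ref{p:01v6}). I read $w$ as the infimum taken over
\[
\mathcal{S}=\{u\in LSC(\bar O):u\text{ is a supersolution of \eqref{eq:pde11}--\eqref{eq:bd11}}\},
\]
the standard Perron family when a supersolution is available. This set is non-empty since $\overline u\in \mathcal{S}$, and Proposition~\ref{p:cp} applied to $\underline u$ together with any $u\in \mathcal{S}$ gives $u\ge \underline u$. Hence $\underline u\le w\le \overline u$, and $w=g$ on $O^{c}$ since every $u\in \mathcal{S}$ does. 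Uniqueness of any continuous solution is immediate from Proposition~\ref{p:cp}, so the substantive work is to show that $w$ itself is a solution.

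First I would show that the LSC envelope $w_{*}$ is a supersolution at every $x_{0}\in O$. Given $\phi\in J^{-}(w_{*},x_{0})$, choose $x_{n}\to x_{0}$ and $u_{n}\in \mathcal{S}$ with $u_{n}(x_{n})\to w_{*}(x_{0})$; a standard touching argument produces near-test functions against which \eqref{eq:sup111} holds for each $u_{n}$. Passing to the limit is done via the splitting \eqref{eq:03v5}: the $H$-term and the local piece $\mathcal{I}_{r,1}(\phi,x_{n})$ converge by smoothness of $\phi$, while the tail $\mathcal{I}_{r,2}((u_{n})_{g},x_{n})$ is handled by Fatou's lemma on $\mathbb{R}^{d}\setminus B_{r}$ with $\underline u$ and $\overline u$ supplying integrable bounds against $\nu$. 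Sending $r\downarrow 0$ yields \eqref{eq:sup111} for $w_{*}$, and Proposition~\ref{p:01v6} translates this into \eqref{eq:sup11}.

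Next I would argue that $w^{*}$ is a subsolution by the classical bump-and-contradict construction. If the subsolution inequality fails at some $x_{0}\in O$ against a test function $\phi\in J^{+}(w^{*},x_{0})$, continuity of the data lets one pick $\delta,\eta,\rho>0$ so that $\phi_{\delta,\eta}(x):=\phi(x)-\delta+\eta|x-x_{0}|^{2}$ is a strict smooth supersolution on $\bar B_{\rho}(x_{0})$, with $\phi_{\delta,\eta}(x_{0})<w^{*}(x_{0})$ but $\phi_{\delta,\eta}>w^{*}$ for $|x-x_{0}|\in(\rho/2,\rho)$. Taking any $u\in \mathcal{S}$ close to $w$ near $x_{0}$, the function
\[
\tilde u=\min(u,\phi_{\delta,\eta})\text{ on }\bar B_{\rho}(x_{0}),\qquad \tilde u=u\text{ elsewhere,}
\]
still lies in $\mathcal{S}$ but satisfies $\tilde u(x_{0})<w(x_{0})$, contradicting the definition of $w$. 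The \emph{principal obstacle} here is the nonlocality: altering the function on $\bar B_{\rho}(x_{0})$ changes the tail integral $\mathcal{I}_{r,2}$ at every other point as well. This is exactly where Definition~\ref{d:11} with $r<\rho/2$ is needed---since $\tilde u\le u$, the tail moves in the favorable direction for the supersolution inequality---and Proposition~\ref{p:01v6} then transfers the conclusion back to Definition~\ref{d:01v5}.

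Putting the pieces together, $w^{*}$ is a subsolution and $w_{*}$ a supersolution, so Proposition~\ref{p:cp} gives $w^{*}\le w_{*}$. Combined with the tautological $w_{*}\le w\le w^{*}$, this forces $w_{*}=w=w^{*}$, hence $w\in C(\bar O)$ and $w$ solves \eqref{eq:pde11}--\eqref{eq:bd11}; uniqueness is a further application of Proposition~\ref{p:cp}.
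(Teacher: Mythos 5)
Your reading of $w$ as the infimum over the family $\mathcal S$ of supersolutions is a sensible repair of the statement (as printed, an infimum of subsolutions taken in $LSC(\bar O)$ cannot be meant literally), but the paper's own machinery points the other way: Proposition~\ref{p:perron1} (the maximum of two subsolutions is a subsolution, via Theorem 2 of \cite{BI08}) and Lemma~\ref{l:perron1} (a subsolution whose lower envelope fails the supersolution property at an interior point can be bumped \emph{up} to a strictly larger subsolution, agreeing with the original off a small ball) are exactly the two ingredients for $w=\sup\{\hbox{subsolutions}\}$. You therefore run the mirror-image construction: the supersolution property of $w_{*}$ by direct stability of the infimum, and the subsolution property of $w^{*}$ by bump-and-contradict; the paper does the opposite in each half and closes with Proposition~\ref{p:cp} in the same way. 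Your Fatou step is sound as stated, since in \eqref{eq:sup111} only an integrable \emph{lower} bound on $(u_{n})_{g}$ is needed ($u_{n}\ge\underline u$ on $\bar O$, $u_{n}=g$ off $\bar O$, $\nu(B_{r}^{c})<\infty$) and the resulting inequality $\liminf_{n}\mathcal I_{r,2}((u_{n})_{g},x_{n})\ge\mathcal I_{r,2}((w_{*})_{g},x_{0})$ has the favorable sign.

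The step that would not survive being written out is the bump. You take $\phi_{\delta,\eta}=\phi-\delta+\eta|x-x_{0}|^{2}$ and glue $\tilde u=\min(u,\phi_{\delta,\eta})$ on $\bar B_{\rho}(x_{0})$ with $\tilde u=u$ outside, for an \emph{arbitrary} $u\in\mathcal S$ close to $w$ near $x_{0}$. Two concrete problems. First, at a point $x\in B_{\rho}(x_{0})$ where the minimum is attained by $\phi_{\delta,\eta}$, the ``favorable direction of the tail'' you invoke requires $\tilde u\le\phi_{\delta,\eta}$ \emph{globally}, so that $\mathcal I_{r,2}(\tilde u_{g},x)\le\mathcal I_{r,2}(\phi_{\delta,\eta},x)$ and the strict classical supersolution property of $\phi_{\delta,\eta}$ can absorb the error; with your piecewise definition $\tilde u=u$ off the ball, and $u$ is only known to dominate $w$, not to lie below $\phi_{\delta,\eta}$ (one has $\phi\ge w^{g}$, not $\phi\ge u$), so the tail of $\tilde u_{g}$ at such $x$ is uncontrolled. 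Second, the global subtraction of $\delta$ ruins the boundary condition: on $O^{c}$ one only knows $\phi\ge g$, so $\min(g,\phi-\delta)$ can drop below $g$ and $\tilde u\notin\mathcal S$. Both defects are repaired exactly as in the paper's Lemma~\ref{l:perron1}, mirrored: localize the perturbation, e.g. $\phi_{\gamma,\kappa}=\phi+\gamma(|x-x_{0}|^{2}-\kappa^{2})I_{B_{2\kappa}(x_{0})}$, which is $\ge\phi\ge w^{g}$ off $B_{\kappa}(x_{0})\subset O$ and dips below $\phi$ only inside; apply the minimum to $w_{*}$ (already shown to be a supersolution in your first half) rather than to an arbitrary $u\in\mathcal S$, so that the modification is genuinely confined to $B_{\kappa}(x_{0})$; and invoke the min-stability dual of Proposition~\ref{p:perron1} to conclude $\min(w_{*},\phi_{\gamma,\kappa})\in\mathcal S$ while being strictly below $w$ at points approaching $x_{0}$, which is the desired contradiction with $w=\inf\mathcal S$.
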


We relegate the proof of Proposition~\ref{p:pm} 
in Appendix~\ref{sec:per}.

\begin{remark}
According to Propositions~\ref{p:cp} and  \ref{p:pm}, 
the remaining task is to show the existence of a subsolution $\underline u$ and  a supersolution $\overline u$. 
In general, 
as far as the classical  Dirichlet boundary concerned, 
one shall not expect the existence of subsolution and
supersolution for free due to Example 7.8 of \cite{CIL92}. In this regard, 
some sufficient conditions of the existence of subsolution and
supersolution of Dirichlet problem is provided by Example 4.6 of \cite{CIL92}, and the general case has remained open. In this paper, we address the issue of constructing a supersolution, which we carry out in the next subsection. 
\end{remark}

\subsection{Stochastic exit control problem for an It\^{o} jump diffusion}
\label{sec:exit}
To proceed, we consider an exit control problem with Markovian policy.
We consider a fixed filtered probability space $(\Omega, \mathcal F, \mathbb P, \{\mathcal F_{t}, t>0\})$, on which $W$ is a standard Brownian motion and $L$ is a L\'{e}vy process with generating triplet $(0, \nu, 0)$,
see notions of L\'{e}vy process in \cite{Sat99} or \cite{Ber96}.
We consider a stochastic
differential equation controlled by a Lipschitz continuous
function $m: \mathbb R^{d} \mapsto [\underline a, \overline a]$, 
\begin{equation}
 \label{eq:sdev5}
 X_{t} = x + \int_{0}^{t} b(m(X_{s})) dt + \int_{0}^{t} \sigma(m(X_{s})) dW_{s} + L_{t},
\end{equation}
By \cite{App04}, \eqref{eq:sdev5} admits a unique solution which has 
a c\`adl\`ag version, and we assume $X$ to be a c\`adl\`ag process.
Next, we define the first exit times
\begin{equation}
 \label{eq:tau}
 \tau = \inf\{t>0, X_{t} \notin O\}
\end{equation}
and 
\begin{equation}
 \label{eq:tauhat}
 \hat \tau = \inf\{t>0, X_{t} \notin \bar O\}.
\end{equation}
Let $\mathbb D^{d}_{\infty}$ be
the space of  c\`adl\`ag functions on $[0, \infty)$
with Skorohod metric given by $d^{o}_{\infty}$,
see detailed definition in Section~\ref{sec:skorohod}.
We are interested in the following subset of Markovian policy space $\mathcal M$ defined  by
\begin{equation} \label{eq:05v5}
 \mathcal M = \{m \in C^{0,1}(\mathbb R^{d}, [\underline a, \overline a]) : \mathbb P^{m,x} (\hat \tau = 0) = 1, 
 \ \forall x\in \partial O\}.
\end{equation}
For a given $(x,m) \in \mathbb R^{d} \times \mathcal M$, we use
$\mathbb P^{m,x}$ to denote the probability measure 
on $\mathbb D^{d}_{\infty}$  induced by $X_{t}$, i.e.
$\mathbb P^{m,x} (B) = \mathbb P(X\in B)$ for all Borel set $B$ of
$(\mathbb D^{d}_{\infty}, d^{o}_{\infty})$ .
We also use $\mathbb E^{m,x}$ to denote the expectation operator with respect to $\mathbb P^{m,x}$.
\begin{proposition}
 \label{p:03v5}
 Let $m\in \mathcal M$ of \eqref{eq:05v5}, and
 $$V_{m}(x) := \mathbb E^{m,x} \big[ \int_{0}^{\tau} e^{-s} \ell(X_{s}) ds
+ g(X_{\tau})\big]$$
with $\tau$ given by \eqref{eq:tau}. Then, the function $V_{m}$ belongs to 
 $C(\bar O)$.
\end{proposition}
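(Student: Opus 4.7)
The proof proceeds in three stages: (i) stability of $X^{m,x}$ with respect to the initial condition in Skorohod topology, (ii) continuity of the entrance time and entrance point functionals along $\mathbb{P}^{m,x}$-almost every trajectory, and (iii) passage to the limit under the expectation. Fix $x \in \bar O$ and a sequence $x_n \to x$. Since $m \in C^{0,1}(\mathbb{R}^d, [\underline a, \overline a])$ and $b,\sigma \in C^{0,1}$ by Assumption~\ref{a:01v5}, the composite coefficients $b\circ m$ and $\sigma\circ m$ are Lipschitz, so \eqref{eq:sdev5} admits pathwise unique strong solutions on a single filtered space driven by the same $W$ and $L$ with all initial conditions simultaneously. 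Standard Lipschitz SDE estimates give $\mathbb{E}[\sup_{t\le T}|X^{m,x_n}_t - X^{m,x}_t|^2] \to 0$ for every $T>0$; along a subsequence this yields uniform a.s.\ convergence on compacts and hence convergence in the Skorohod metric $d^o_\infty$ on $\mathbb{D}^d_\infty$ almost surely.

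The decisive step is to upgrade this path convergence to the joint convergence $(\tau^{x_n}, X^{m,x_n}_{\tau^{x_n}}) \to (\tau, X^{m,x}_\tau)$ almost surely. Theorems~\ref{t:hit01} and \ref{t:hit02} isolate explicit sets $C_\tau, C_{X_\tau} \subseteq \mathbb{D}^d_\infty$ on which the entrance time and entrance point maps are Skorohod continuous, so it suffices to show $\mathbb{P}^{m,x}(C_\tau \cap C_{X_\tau})=1$ for every $x \in \bar O$. For $x \in O$ one must rule out, with full probability, the grazing-type paths for which $\tau$ or $X_\tau$ fails to be Skorohod continuous; this is where the regularity assumption on the Lévy density $\hat\nu$ in Assumption~\ref{a:01v5}\eqref{a:012v5} and the non-degeneracy at $\partial O$ baked into $\mathcal{M}$ enter. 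For $x \in \partial O$, the defining property $\mathbb{P}^{m,x}(\hat\tau=0)=1$ of \eqref{eq:05v5} forces $\tau = 0 = \hat\tau$ almost surely, and càdlàg regularity with $X_0=x$ then gives $\tau^{x_n}\to 0$ and $X^{m,x_n}_{\tau^{x_n}}\to x$.

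Once this joint almost sure convergence is secured, I write
\[
V_m(x_n) = \mathbb{E}\Bigl[\int_0^{\tau^{x_n}} e^{-s}\ell(X^{m,x_n}_s)\, ds + g(X^{m,x_n}_{\tau^{x_n}})\Bigr].
\]
Since $\ell, g \in C_0(\mathbb{R}^d)$ by Assumption~\ref{a:01v5}, the integrand is uniformly bounded by $\|\ell\|_\infty + \|g\|_\infty$ and converges almost surely to the corresponding functional of $X^{m,x}$; bounded convergence then gives $V_m(x_n) \to V_m(x)$ along the chosen subsequence. Because every subsequence of $(x_n)$ admits a further subsequence along which this holds, $V_m(x_n) \to V_m(x)$ without subsequence qualification, proving $V_m \in C(\bar O)$.

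The main obstacle is clearly the second stage, namely identifying $C_\tau \cap C_{X_\tau}$ as a set of full $\mathbb{P}^{m,x}$-measure. This is exactly the phenomenon the introduction flags via Example~\ref{e:02v5} and the counterexamples on page 657 of \cite{SV72}: continuity of the entrance time and, even more so, of the entrance point is generically false, and the only way to rescue almost sure continuity is through the geometry of $\partial O$ and the structural assumptions on $\nu$ and $\mathcal{M}$. The technical work therefore concentrates on verifying that the discontinuity events (grazing at the boundary, entrance by a jump of $L$ onto a boundary point with atypical local geometry) carry zero probability under $\mathbb{P}^{m,x}$ for every starting point in $\bar O$.
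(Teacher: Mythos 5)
Your overall architecture matches the paper's: establish path stability in $x$, identify the Skorohod-continuity sets of the exit time and exit point via Theorems~\ref{t:hit01} and \ref{t:hit02}, show those sets have full $\mathbb{P}^{m,x}$-measure, and pass to the limit by bounded convergence (the paper phrases the last step as weak convergence plus the continuous mapping theorem, which is equivalent to your subsequence argument). The boundary case $x\in\partial O$ is also handled the same way. However, there is a genuine gap at exactly the point you yourself flag as "the decisive step": you assert that $\mathbb{P}^{m,x}(C_\tau\cap C_{X_\tau})=1$ must follow from the regularity of $\hat\nu$ and the non-degeneracy encoded in $\mathcal{M}$, but you never produce the argument. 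Naming the obstacle is not the same as removing it, and this full-measure claim is the entire content of the proposition beyond routine estimates.

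The paper's actual mechanism is different from what you gesture at, and it is worth recording because your proposed route (via properties of $\hat\nu$) would not obviously close the gap. First, $m\in\mathcal M$ together with the strong Markov property yields $\mathbb{P}^{m,x}\{\tau=\hat\tau\}=1$ for all $x\in\bar O$, which is one of the two defining equalities of $\Gamma_O$ in \eqref{eq:hit04}. Second — and this is the missing idea — one must show that, almost surely, the process does not jump at the instant $\bar\tau=T_{O^c}(X^-)$ when its left-limit path first reaches $\partial O$. The paper does this by observing that the restriction of $\bar\tau$ to the event $A=\{X^-(\bar\tau)\in\partial O\}$ is a predictable stopping time (since $A\in\mathcal F_{\bar\tau-}$ by left continuity), whereas its restriction to the jump event $B=\{X^-(\bar\tau)\neq X(\bar\tau)\}$ is totally inaccessible by Meyer's theorem (Theorem III.4 of \cite{Pro04}); a predictable time and a totally inaccessible time coincide with probability zero (Theorem III.3 of \cite{Pro04}), so $\mathbb{P}^{m,x}(A\cap B)=0$. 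This quasi-left-continuity argument is what places almost every path in $\hat\Gamma_O$ of \eqref{eq:Gammah}; it uses only that $X$ is driven by a L\'{e}vy process, not the continuity of the density $\hat\nu$. Without this (or an equivalent) argument, your proof does not establish the proposition.
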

\begin{proof}
This result is a corollary of the technical results presented in Theorems~\ref{t:hit01} and \ref{t:hit02}. See Section~\ref{prop:PofProVm}.
\end{proof}

\subsection{Main result}
\label{sec:exist}

We next state the main result of this paper, which is a corollary of Proposition~\ref{p:03v5}.

\begin{theorem}
 \label{t:05v5}
 If $\mathcal M \neq \emptyset$ and $u = g$ is a subsolution of  \eqref{eq:pde11} - \eqref{eq:bd11},  then there exists a unique 
 continuous viscosity solution of  \eqref{eq:pde11} - \eqref{eq:bd11}.
\end{theorem}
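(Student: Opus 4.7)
The plan is to build a viscosity supersolution from the exit-time value function $V_m$ and then apply Perron's method. Since $\mathcal{M}\neq\emptyset$, fix any $m\in\mathcal{M}$ and set $\overline{u}:=V_m$. The hypothesis already supplies the subsolution $\underline{u}=g$, so once $\overline{u}$ is shown to be a supersolution in the sense of Definition~\ref{d:03}, Proposition~\ref{p:pm} produces a continuous viscosity solution and Proposition~\ref{p:cp} yields uniqueness.

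Two items must be verified for $\overline{u}$. Continuity on $\bar{O}$ is exactly Proposition~\ref{p:03v5}. The boundary identification $\overline{u}=g$ on $\partial O$ follows from the definition of $\mathcal{M}$: for $x\in\partial O$ we have $\mathbb{P}^{m,x}(\hat\tau=0)=1$, and because $\bar{O}^{c}\subset O^{c}$ gives $\tau\le\hat\tau$, we also have $\tau=0$ a.s., hence $V_m(x)=g(X_0)=g(x)$.

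The substantive step is to verify the supersolution inequality \eqref{eq:sup11} at each $x\in O$. For $\phi\in J^{-}(V_m,x)$, the strong Markov property of $X$ under the Markovian control $m$ produces the dynamic programming identity
$$V_m(x)=\mathbb{E}^{m,x}\!\Bigl[\int_{0}^{h\wedge\tau}e^{-s}\ell(X_s)\,ds+e^{-(h\wedge\tau)}V_m(X_{h\wedge\tau})\Bigr],\qquad h>0,$$
where $V_m$ is read off as $g$ on $\bar{O}^{c}$. Combining $V_m(X_{h\wedge\tau})\ge\phi(X_{h\wedge\tau})$ (from $\phi\le (V_m)_g$) with It\^{o}'s formula applied to $e^{-t}\phi(X_t)$, then using $\phi(x)=V_m(x)$, yields
$$0\ge\mathbb{E}^{m,x}\!\Bigl[\int_{0}^{h\wedge\tau}e^{-s}\bigl(\ell(X_s)-\phi(X_s)+H(\phi,X_s,m(X_s))+\mathcal{I}(\phi,X_s)\bigr)\,ds\Bigr].$$
Because $\tau>0$ almost surely for $x\in O$, dividing by $h$ and sending $h\downarrow 0$ extracts the pointwise inequality
$$V_m(x)-\ell(x)\ge H(\phi,x,m(x))+\mathcal{I}(\phi,x)\ge \inf_{a\in[\underline{a},\overline{a}]}H(\phi,x,a)+\mathcal{I}(\phi,x)=-F(\phi,x),$$
which is exactly \eqref{eq:sup11}.

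The main obstacle is the It\^{o}/DPP step: one must justify It\^{o}'s formula for $e^{-t}\phi(X_t)$ in the presence of the nonlocal jump term (admissible because $\phi\in C^{\infty}_{b}$ and $\nu$ is a L\'evy measure), argue that the compensated jump and Brownian integrals give a true martingale up to the bounded stopping time $h\wedge\tau$, and justify the $h\downarrow 0$ passage after dividing by $h$ via dominated convergence and right-continuity of $s\mapsto X_s$. Continuity of $V_m$ from Proposition~\ref{p:03v5} makes the envelope $(V_m)_g$ coincide with $V_m$ on $O$, keeping the test-function comparison clean and turning the final limit into a routine continuity argument.
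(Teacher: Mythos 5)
Your proposal is correct and follows essentially the same route as the paper: fix $m\in\mathcal M$, use Proposition~\ref{p:03v5} for continuity and the boundary identification $V_m=g$ on $\partial O$, and verify the supersolution property via the dynamic programming identity combined with Dynkin's formula for $e^{-t}\phi(X_t)$, before invoking Propositions~\ref{p:pm} and~\ref{p:cp}. The only (cosmetic) difference is that you divide by $h$ and pass to the limit $h\downarrow 0$ directly, whereas the paper argues by contradiction, using the continuity of $F_m(\phi,\cdot)$ from Lemma~\ref{p:02v5} to get a uniform sign on a small ball and then stopping at the exit time of that ball.
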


\begin{remark}
The sufficient condition in Theorem \ref{t:05v5} 
requires some regularity of the boundary with respect to 
some controlled process, this requirement is not that strong. For example, the regularity in Example 4.6 of \cite{CIL92} and \cite{BB95} asks the boundary to be $C^{2}$. We can in fact consider non-smooth boundaries
satisfying exterior cone condition
with some appropriate Integro-differential operators,
see the first paragraph of Example \ref{e:11a}, for instance.
\end{remark}

\emph{Proof of Theorem~\ref{t:05v5}}.
The uniqueness holds by Proposition~\ref{p:cp}, and we shall prove
the existence by Perron's method Proposition~\ref{p:pm}. To proceed, we shall find out 
sub and supersolution. Note that $g$ is a subsolution and below we will show that $V_{m}$ is a supersolution for any $m\in \mathcal M$.

We fix a policy $m\in \mathcal M$. By Proposition \ref{p:03v5}, we have $V_{m} \in C(\bar O)$ with $V_{m}(x) = g(x)$ for all $x\in \partial O$. So, it's enough to show that $V_{m}$ satisfies the supersolution property in $O$, i.e.
\begin{equation*}
 \label{eq:exi01}
 F_{m} (\phi, x) + \phi(x) - \ell(x) \ge 0, \ \forall x\in O, \phi \in 
 J^{-}(V_{m}, x).
\end{equation*}
where $F_{m}(\phi, x) = - H(\phi, x, m(x)) - \mathcal I(\phi, x)$.
To the contrary, let's assume 
$$F_{m} (\phi, x) + \phi(x) - \ell(x) = - \epsilon <0$$ for some $x\in O$ and 
$\phi \in J^{-}(V_{m}, x)$.
By Lemma \ref{p:02v5} and the continuity of $m$, the function $ F_{m} (\phi, \cdot)$ is continuous at $x$, and there exists $h>0$ that 
\begin{equation}
 \label{eq:exi21}
 \sup_{|y-x|< h} F_{m} (\phi,  y) + \phi(y) - \ell(y) < - \epsilon/2.
\end{equation}
Since $X$ of \eqref{eq:sdev5} is a c\`adl\`ag process, the first exit time
satisfies $\mathbb P^{m,x}\{\tau >0 \} = 1$.
By the strong Markov property of the process
$X$, we rewrite the value function $V_{m}$ 
as, for any stopping time $\theta \in (0, \tau]$
\begin{equation*} \label{eq:dpp11}
 V_{m}(x) = \mathbb E^{m,x} \Big[ e^{- \theta} V_{m} (X_{\theta}) + \int_{0}^{\theta} e^{-s}\ell(X_{s})ds \Big],
\end{equation*}
which in turn implies  that, with the fact of $\phi \in J^{-} (V_{m}, x)$
\begin{equation*}
 \phi(x) \ge \mathbb E^{m,x} \Big[ e^{- \theta} \phi(X_{\theta}) + \int_{0}^{\theta} e^{-s}\ell(X_{s})ds \Big],
\end{equation*}
On the other hand, one can use Dynkin's formula on $\phi$ to write
$$
\mathbb E^{m, x} [e^{- \theta} \phi(X_{\theta})] = \phi(x) -  
\mathbb E^{m, x} 
\Big [\int_{0}^{\theta} e^{- s} (F_{m} ( \phi, X_{s}) + \phi(X_{s})) ds \Big].
$$
By adding up the above two formulas together, it yields that
$$
\mathbb E^{m, x} 
\Big [\int_{0}^{\theta} e^{- s} (F_{m} ( \phi, X_{s}) + \phi(X_{s}) - \ell(X_{s})) ds \Big] \ge 0.
$$
Finally we take $\theta = \inf\{t >0: X(t)  \notin B_{h}(x)\} \wedge \tau$ in the above and note that 
$\theta > 0$ almost surely in $\mathbb P^{m, x}$. This leads to 
a  contradiction to  \eqref{eq:exi21}. 
\hfill $\square$

\begin{remark}
The sufficient condition of Theorem \ref{t:05v5} consists of (1) $\mathcal M \neq \emptyset$; and (2) subsolution property $g$ to ensure the uniqueness and existence of the solution. 
We will give two examples. In the first example we will address the open problem we posed in Example~~\ref{e:11} (the condition that $\mathcal M \neq \emptyset$ is satisfied). In the second 
example, we will address the necessity of the assumption on $g$.
\end{remark}

\begin{example}[Resolution of the open problem in Example~~\ref{e:11}]
 \label{e:11a}
 {\rm
Consider the set-up in Example~\ref{e:11} with $\alpha \in (0,2)$. We address the existence and uniqueness problem we proposed below. 
We should point out that our proof would not be affected 
 if
 the domain $O$ is 
 replaced by any open connected set satisfying exterior 
 cone condition.
 
 We first rewrite the equation \eqref{eq:toy1} as
$$- \inf_{a \in [-1,1]} \{a  \ \partial_{x_{1}} u \} + (- \Delta)^{\alpha/2} u + u - 1 = 0 \ \hbox{ on } O.$$
For $m\in \mathcal M$, we set
$$X_{t} = x + \int_{0}^{t} m(X_{s}) e_{1} ds + L_{t}^{\alpha}$$
where $e_{1} = (1, 0)'$ is a unit vector and $L^{\alpha}$ is a symmetric 
$\alpha$-stable process with the generating triplet 
$(0, \nu (dy) = \frac{dy}{|y|^{d+\alpha}}, 0)$.
The corresponding value function is
$$V_{m}(x) = \mathbb E^{m,x} \Big[ \int_{0}^{\tau} e^{-s} ds\Big] = \mathbb E^{m,x} [1 - e^{-\tau}]$$
with the first exit time $\tau = \inf\{t>0, X_{t} \notin O\}$. One can directly
check both conditions required by Theorem \ref{t:05v5}:
\begin{itemize}
 \item If $\alpha>0$, then we take $m(x) = 0$ and corresponding $X$ is given by
 $$X_{t} = x + L_{t}^{\alpha};$$
 In this case,  $\mathbb P^{m,x} \{\hat \tau = 0\} = 1$ for all $x\in \partial O$ and $\mathcal M \neq \emptyset$.
 \item $ u = 0$ is subsolution. 
\end{itemize}
\hfill $\square$
}
\end{example}

\begin{example} [On the necessity of the subsolution property of $g$]
 \label{e:11b}{\rm 
In terms of subsolution property of $g$ in Dirichlet problem, the boundary data $g$ shall be understood as any USC function $\bar g$ with 
$\bar g = g$ outside of the domain. This condition indeed a relaxation of the
condition V.2.11 of \cite{FS06}. 

 One can check $u(x) = 1 - e^{-1+|x|}$ is the unique solution of
$$| u' | + u - 1 = 0, \ \forall x\in (-1,1) 
\hbox{ with } u (\pm 1) = 0.$$
However, there is no solution for 
$$| u' | + u + 1 = 0, \ \forall x\in (-1,1) \hbox{ with } u (\pm 1) = 0.$$
Indeed, if there were a solution $u$, the boundary condition $u(1) = 0$
implies that $| u' | + u + 1 > 1/2$ in some neighborhood of $1$
due to the continuity of $u$, which leads to a contradiction. 
One can see that this equation does not satisfy the second condition, i.e. $u= 0$ is not subsolution.
\hfill $\square$
} 
\end{example}


\section{Continuity of Entrance time and point}\label{s:exit1}

In this section we will prove Proposition~\ref{p:03v5}, which is the main ingredient of our main result. This result itself depends on two technical results, Theorems~\ref{t:hit01} and \ref{t:hit02}, which we consider as important technical contributions. First we will introduce some notations to state these results and motivate them. The proofs of these two results are lengthy. Therefore, after stating these results, we will first prove Proposition~\ref{p:03v5} (see Section~\ref{prop:PofProVm}) as a corollary and then return to proving Theorems~\ref{t:hit01} and \ref{t:hit02} in Section~\ref{sec:secmnrs}.

We denote by $(\mathbb D^{d}_{t}, d^{o}_{t})$ the 
complete space of  c\`adl\`ag functions on $[0, t)$ 
taking values in $\mathbb R^{d}$ with Skorohod metric $d^{o}_{t}$, 
and by $(\mathbb D^{d}_{\infty}, d^{o}_{\infty})$ the space of  c\`adl\`ag functions 
on $[0, \infty)$. Since there is variance definitions on the Skorohod metric 
in the literature, we provide the explicit definition of Skorohod metric adopted by this paper in  Appendix \ref{sec:skorohod} taken from \cite{Bil99}.

We also define the entrance time operator
$T_{A}: \mathbb D^{d}_{\infty} \mapsto \mathbb R$ by, 
for a set $A\in \mathbb R^{d}$ and $a\in (0,\infty)$
\begin{equation}
 \label{eq:pi01}
 T_{A}(\omega) = \inf\{t\ge 0: \omega(t) \in A\}, \ 
  T_{A}^{a}(\omega) = \inf\{t\ge 0: \omega(t) \in A\}\wedge a, 
\end{equation}
By convention, $T_{A}(\omega) = \infty$ if $\omega(t) \notin A$ for all $t\ge0$.
Given a set $O$, we will call $T_{O^{c}}(\omega)$ as the exit time of $\omega$
from the set $O$.

As in \cite{Bil99} let $\Pi: \mathbb D^{d}_{\infty} \times [0, \infty) \mapsto \mathbb R^{d}$ defined by $\Pi( \omega,t) = \omega(t)$. 
Similarly, define the value at the first entrance point by
\begin{equation}
 \label{eq:PiO}
\Pi_{O}(\omega) =  \omega(T_{O^{c}}(\omega)).
\end{equation}
Our goal is to investigate the sufficient condition such that the 
mappings 
$T_{O^{c}}$ and $\Pi_{O}$ are continuous for a given set $O$, 
and this will serve as an important tool for the existence of the solution.

\begin{remark}
Example \ref{e:01} shows that $T_{O^{c}}$ is neither 
upper semicontinuous nor lower semicontinuous in general. 
Moreover, Example~\ref{e:02v5} 
demonstrates that the situation for the continuity of 
$\Pi_{O}$ is even worse than the mapping $T_{O^{c}}$.
\end{remark}

The following theorems are the main results of this section 
on the continuity of the two mappings
$T_{O^{c}}$ and $\Pi_{O}$, 
and their proofs will be relegated to Section \ref{sec:pfhit01} and \ref{sec:pfhit02}. Roughly speaking, both $T_{O^{c}}$ and $\Pi_{O}$ are continuous at $\omega$ if, at the first exit time
\begin{enumerate}
 \item either $\omega$ exits from $O$ to $\bar O$ continuously by crossing $\partial O$;
 \item or $\omega$ jumps from a point of $O$ to another point of $\bar O^{c}$.
\end{enumerate}
\begin{theorem}\label{t:hit01}
 $T_{O^{c}} $ is continuous w.r.t. Skorohod metric
 at any $\omega\in \Gamma_{O}$ where
\begin{equation}
 \label{eq:hit04}
 \Gamma_{O} := \{\omega\in \mathbb D^{d}_{\infty}: 
 T_{O^{c}}(\omega^{-}) = T_{O^{c}}(\omega) = T_{\bar O^c}(\omega)\}.
\end{equation}
Here
$$\omega^{-} (t) = \lim_{s\to t-} \omega(s), \ \forall \omega\in \mathbb D^{d}_{\infty}.$$
\end{theorem}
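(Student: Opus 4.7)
The plan is to split the statement into two inequalities, upper semicontinuity $\limsup_n T_{O^c}(\omega_n) \leq T_{O^c}(\omega)$ and lower semicontinuity $\liminf_n T_{O^c}(\omega_n) \geq T_{O^c}(\omega)$, with the first harnessing the equality $T_{O^c}(\omega) = T_{\bar O^c}(\omega)$ (an openness statement about the exterior) and the second harnessing $T_{O^c}(\omega^-) = T_{O^c}(\omega)$ (that the path does not silently cross the boundary from the left). Throughout I would work on a compact interval $[0,T]$ with $T$ a continuity point of $\omega$ chosen larger than $T_{O^c}(\omega)$, and exploit the Skorohod characterization: there exist strictly increasing continuous bijections $\lambda_n \colon [0,T]\to[0,T]$ with $\sup_t|\lambda_n(t)-t|\to 0$ and $\sup_t|\omega_n(\lambda_n(t))-\omega(t)|\to 0$.

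For the upper bound, let $t_0 = T_{O^c}(\omega)$ and fix $\epsilon>0$. Since $T_{\bar O^c}(\omega)=t_0$, one can find $s\in[t_0,t_0+\epsilon/2)$ with $\omega(s)\in\bar O^c$ (either $s=t_0$ when the infimum is attained, or $s$ chosen along a sequence $s_k\downarrow t_0$ with $\omega(s_k)\in\bar O^c$). Right-continuity of $\omega$ at $s$ combined with openness of $\bar O^c$ places $\omega$ in $\bar O^c$ on a right-neighborhood of $s$, so one may pick a continuity point $u$ of $\omega$ in that neighborhood with $u<t_0+\epsilon$ and a ball $B(\omega(u),r)\subset\bar O^c$. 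Uniform approximation then yields, for large $n$, $\omega_n(\lambda_n(u))\in B(\omega(u),r)\subset O^c$, hence $T_{O^c}(\omega_n)\leq\lambda_n(u)<t_0+\epsilon$; letting $\epsilon\downarrow 0$ gives the upper bound.

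For the lower bound, suppose toward contradiction that along a subsequence $T_{O^c}(\omega_{n_k})\to t_1<t_0$. Writing $t_k=T_{O^c}(\omega_{n_k})$ and $s_k=\lambda_{n_k}^{-1}(t_k)$, one has $s_k\to t_1$ and $\omega_{n_k}(\lambda_{n_k}(s_k))=\omega_{n_k}(t_k)\in O^c$, while uniform approximation gives $\omega(s_k)-\omega_{n_k}(t_k)\to 0$. Passing to a further subsequence, $s_k$ approaches $t_1$ either from the right (including $s_k=t_1$) or strictly from the left; the c\`adl\`ag property then forces $\omega(s_k)\to\omega(t_1)$ or $\omega(s_k)\to\omega^-(t_1)$, respectively. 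Closedness of $O^c$ puts the limit in $O^c$, yielding either $T_{O^c}(\omega)\leq t_1$ (direct contradiction) or $T_{O^c}(\omega^-)\leq t_1$, the latter contradicting $T_{O^c}(\omega^-)=t_0$ by $\omega\in\Gamma_O$. The main obstacle is exactly this bifurcation by side of approach: the time-change $\lambda_n$ can shift the exit times of $\omega_n$ to either side of $t_1$, so both the right and left limits of $\omega$ at $t_1$ must be controlled, which is precisely why the definition of $\Gamma_O$ imposes two equalities. A minor bookkeeping point is that boundedness of $\omega_n(t_k)$, needed to extract subsequences, is automatic from boundedness of $\omega$ on $[0,T]$ transferred through $\omega_{n_k}(t_k)-\omega(s_k)\to 0$.
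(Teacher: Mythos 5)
Your proof is correct, but it follows a genuinely different route from the paper's. The paper first reduces to dimension one via the signed distance function $\rho$ (so that $T_{O^{c}}=T_{(-\infty,0]}\circ\rho$ and $T_{\bar O^{c}}=T_{(-\infty,0)}\circ\rho$), proves separate upper and lower semicontinuity lemmas for the truncated entrance times with respect to the sup norm on $[0,m]$ (Lemmas~\ref{l:hit03} and \ref{l:hit04}, the latter phrased in terms of the lower envelope $\omega_{*}$ rather than the left limit $\omega^{-}$), transfers these to the Skorohod metric via Lemma~\ref{l:hit01}, and then must separately verify that the resulting continuity set $B$ (defined through $\omega_{*}$) coincides with $\Gamma_{(0,\infty)}$. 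You instead argue directly in $\mathbb R^{d}$: your upper bound exploits openness of $\bar O^{c}$ exactly where the paper uses $T_{(-\infty,0)}$, and your lower bound extracts a subsequence of exit times and splits according to whether the time-changed exit times approach the limit from the right or the left, landing in $O^{c}$ for either $\omega(t_{1})$ or $\omega^{-}(t_{1})$ by closedness. This is shorter, avoids the lower-envelope machinery and the identification $B=\Gamma_{(0,\infty)}$ (one of the fiddlier steps in the paper), and makes transparent why both equalities in \eqref{eq:hit04} are needed; the paper's version buys reusable one-dimensional semicontinuity lemmas (used again via Lemma~\ref{p:hit03}) and a clean separation of the sup-norm and Skorohod issues. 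Two small points you should make explicit: that the entrance time of a c\`adl\`ag path into the closed set $O^{c}$, when finite, is attained (so that $\omega_{n_{k}}(t_{k})\in O^{c}$), and that in the left-approach case the conclusion $\omega^{-}(t_{1})\in O^{c}$ indeed gives $T_{O^{c}}(\omega^{-})\le t_{1}$, contradicting the first equality in \eqref{eq:hit04}; both are immediate.
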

\begin{remark}
It is worth noting that $\Gamma_{O}$ is not a superset of the continuous sample paths, since the second inequality in its definition may not be satisfied. So the lower semi-continuity side of the proof does not follow from the result in \cite{MR2846253}, which shows that the continuous sample paths to be in the points of lower semi-continuity of the above map.
\end{remark}

\begin{theorem}
 \label{t:hit02}
 $\Pi_{O}$  is continuous w.r.t. Skorohod metric at any $\omega\in \hat \Gamma_{O}$
\begin{equation}
 \label{eq:Gammah}
 \hat \Gamma_{O}: = \{\omega \in \Gamma_{O}:
 \hbox{ if }
 \Pi_{O}(\omega^{-}) \in \partial O, \hbox{ then } 
 \Pi_{O}(\omega^{-}) = \Pi_{O}(\omega) \}
\end{equation}
\end{theorem}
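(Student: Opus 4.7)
The plan is to argue that the condition $\omega \in \hat\Gamma_O$ is precisely what forces $\omega$ to be continuous at its first exit time $\tau := T_{O^c}(\omega)$, after which a standard Skorohod argument together with Theorem~\ref{t:hit01} will close the proof. Fix a sequence $\omega_n \to \omega$ in $(\mathbb D^{d}_{\infty}, d^{o}_{\infty})$ and set $\tau_n := T_{O^c}(\omega_n)$; throughout assume $\tau < \infty$ (otherwise there is nothing to show).

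The first and essential step is to show that every $\omega \in \hat\Gamma_O$ with $\tau > 0$ satisfies $\omega(\tau^-) = \omega(\tau)$. Indeed, the defining property of $\tau$ forces $\omega(s) \in O$ for all $s \in [0, \tau)$, so $\omega(\tau^-) \in \bar O$; on the other hand, $\omega \in \Gamma_O$ gives $T_{O^c}(\omega^-) = \tau$, and since $\omega^-(\tau) = \omega(\tau^-)$, this value must lie in $O^c$. Consequently $\Pi_O(\omega^-) = \omega(\tau^-) \in \bar O \cap O^c = \partial O$, and the defining condition of $\hat\Gamma_O$ forces $\omega(\tau^-) = \Pi_O(\omega^-) = \Pi_O(\omega) = \omega(\tau)$. (For $\tau = 0$ the conclusion is trivial under the convention $\omega(0^-) = \omega(0)$.)

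The second step invokes Skorohod convergence. Theorem~\ref{t:hit01} gives $\tau_n \to \tau$ since $\hat\Gamma_O \subseteq \Gamma_O$. Fix $T > \tau$ and pick strictly increasing homeomorphisms $\lambda_n : [0, T] \to [0, T]$ realizing the Skorohod convergence, so that $\sup_{t \le T} |\lambda_n(t) - t| \to 0$ and $\sup_{t \le T} |\omega_n(\lambda_n(t)) - \omega(t)| \to 0$. Setting $s_n := \lambda_n^{-1}(\tau_n)$, we have $s_n \to \tau$, and
\begin{equation*}
 \omega_n(\tau_n) \;=\; \omega_n(\lambda_n(s_n)) \;=\; \omega(s_n) \,+\, \bigl[\omega_n(\lambda_n(s_n)) - \omega(s_n)\bigr],
\end{equation*}
where the bracketed term vanishes uniformly on $[0,T]$ by Skorohod convergence and $\omega(s_n) \to \omega(\tau)$ by the continuity of $\omega$ at $\tau$ established in the first step (which absorbs approaches of $s_n$ from either side). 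Hence $\Pi_O(\omega_n) = \omega_n(\tau_n) \to \omega(\tau) = \Pi_O(\omega)$.

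The main obstacle lies in the first step: recognizing that $\hat\Gamma_O$ is engineered precisely to rule out the pathological scenario in which $\omega$ approaches $\partial O$ continuously from the left but then jumps into $\bar O^c$ at $\tau$. As Example~\ref{e:02v5} illustrates, without this condition a small Skorohod perturbation can reroute the path to cross $\partial O$ continuously at a slightly earlier time rather than making the original jump into $\bar O^c$, producing a different exit point in the limit; the computation above confirms that $\hat\Gamma_O$ excludes exactly this obstruction.
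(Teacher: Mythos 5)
Your Step~1 is false, and with it the proof collapses on exactly the paths that make this theorem delicate. You assert that $T_{O^c}(\omega^-)=\tau$ forces $\omega^-(\tau)\in O^c$. That inference is valid for a \emph{right}-continuous path entering a closed set (take $t_n\downarrow\tau$ with $\omega(t_n)\in O^c$ and pass to the limit), but $\omega^-$ is left-continuous, so the infimum defining $T_{O^c}(\omega^-)$ need not be attained at a point where $\omega^-$ lies in $O^c$. Concretely, take $O=(0,2)$ and $\omega\equiv 1$ on $[0,1)$, $\omega\equiv -1$ on $[1,\infty)$. Then $T_{O^c}(\omega)=T_{\bar O^c}(\omega)=T_{O^c}(\omega^-)=1$, so $\omega\in\Gamma_O$, and $\Pi_O(\omega^-)=\omega^-(1)=1\in O$ is not on $\partial O$, so the defining implication of $\hat\Gamma_O$ holds vacuously and $\omega\in\hat\Gamma_O$; yet $\omega(\tau^-)=1\neq -1=\omega(\tau)$. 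So $\hat\Gamma_O$ is \emph{not} the set of paths continuous at their exit time: it also contains every path that jumps from the interior of $O$ directly into $\bar O^c$, and these are genuine continuity points of $\Pi_O$ that your argument does not cover. Your Step~2 breaks for them because $s_n$ may approach $\tau$ from below, in which case $\omega(s_n)\to\omega(\tau^-)\neq\omega(\tau)$.

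The missing content is precisely the paper's second case. When $\Pi_O(\omega^-)\neq\Pi_O(\omega)$, membership in $\hat\Gamma_O$ forces $\omega^-(T)\in O$, so $\omega^-$ keeps a strictly positive distance from $\partial O$ on all of $[0,T]$; any uniformly close (reparametrized) approximant therefore stays in $O$ up to time $T$, giving $T_n\ge T$ eventually. Combined with Theorem~\ref{t:hit01} this yields $T_n\downarrow T$, and then only the \emph{right}-continuity of $\omega$ at $T$ is needed to conclude $\omega^n(T_n)\to\omega(T)$; one finishes by transferring the uniform-norm argument back to the Skorohod metric via the time changes $\lambda_n$. Your first case (continuous crossing of $\partial O$, i.e.\ $\Pi_O(\omega^-)\in\partial O$) matches the paper's first case, but this one-sided convergence argument for jump exits is an essential extra step that you must add.
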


\subsection{Proof of Proposition~\ref{p:03v5}}~\label{prop:PofProVm}
Let us denote $b_{m} = b\circ m$ and $\sigma_{m} = \sigma\circ m$. 

 If $x\in \partial O$, then $\tau = 0$ $\mathbb P^{m,x}$-almost surely 
 by definition and
$V_{m}(x) = g(x)$. In the rest of the proof, let $x_{n} \to x \in \bar O$, and we will show the continuity of $V_{m}$ at $x$.

\textbf{Step 1.}
In this step, we will show $\mathbb P^{m, x} (\hat \Gamma_{O} ) =1$
 for all $x\in \bar O$ and $\hat \Gamma_{O}$ defined by 
 \eqref{eq:Gammah}.
 Since both $b_{m}$ and $\sigma_{m}$ are Lipschitz continuous, there exists unique strong solution $X$, which is c\`adl\`ag process with strong Markovian property, see Example 6.4.7 of \cite{App04}. Therefore, $m\in \mathcal M$ implies
\begin{equation}
 \label{eq:06v5}
  \mathbb P^{m,x} \{\tau = \hat \tau\} = 1, \ \forall x\in \bar O.
\end{equation}
 Hence, for all $x\in \partial O$, we have $\Gamma_{O} = \hat \Gamma_{O}$ and $\mathbb P^{m, x} (\hat \Gamma_{O} ) =1$. 
 Now, it remains to show
 $\mathbb P^{m, x} (\hat \Gamma_{O} ) =1, \ \forall x\in O. $
Let $x\in O$ and $\bar \tau = T_{O^{c}}(X^{-})$.
 We define 
 $$\bar \tau_{A} = \left\{
\begin{array}
 {ll}
 \bar \tau & \hbox{ if } \omega \in A;\\
 \infty & \hbox{ otherwise }
\end{array}\right. \hbox{ and } 
\bar \tau_{B} = \left\{
\begin{array}
 {ll}
 \bar \tau & \hbox{ if } \omega \in B;\\
 \infty & \hbox{ otherwise. }
\end{array}\right.
 $$
where 
 $$A = \{X^{-}(\bar \tau) 
 \in \partial O\} \hbox{ and } 
 B = \{X^{-}(\bar \tau) \neq X(\bar \tau)\}.$$

 The left continuity of $X^{-}$ implies $A\in \mathcal F_{\bar \tau-}$ and  the hitting time $\bar \tau_{A}$ is
 a predictable stopping time, while $\bar \tau_{B}$ is totally inaccessible
 stopping time due to the jump by
Meyer's theorem, see Theorem III.4 of \cite{Pro04}. 
Therefore, we conclude 
$\mathbb P^{m,x}(\bar \tau_{A} = \bar \tau_{B}) = 0$ 
by Theorem III.3 of \cite{Pro04}, and further we have
$\mathbb P^{m,x}(A \cap B) = 0$.
Therefore, $X$ is continuous at $\bar \tau$ almost surely in $\mathbb P^{m,x}$. Together with \eqref{eq:06v5}, we conclude $\mathbb P^{m, x} (\hat \Gamma_{O} ) =1$.

\textbf{Step 2.} Recall that 
$\hat \Gamma_{O}$ and $\Pi_{O}$ are defined by \eqref{eq:Gammah} and \eqref{eq:PiO}, respectively.
We will show that $f_{1}, f_{2}$ are continuous
at all $\omega\in \hat \Gamma_{O}$,  where
$$f_{1}(\omega) = \int_{0}^{T_{O^{c}}(\omega)} e^{- s}\ell(\omega_{s}) ds, 
\ \hbox{ and } \ f_{2}(\omega) = e^{- T_{O^{c}}(\omega)} g(\Pi_{O}(\omega)), \
 \forall \omega\in \mathbb D^{d}_{\infty}.$$
The continuity of $f_{2}$ is the direct consequence of Theorem \ref{t:hit01} and Theorem \ref{t:hit02}. So, it remains to show the continuity of $f_{1}$.

Suppose $\omega^{n} \to \omega \in \hat \Gamma_{O}$ in Skorohod metric, 
and we denote $T_{n} = T_{O^{c}}(\omega^{n})$ 
and $T =  T_{O^{c}}(\omega)$, we conclude  $f_{1}(\omega^{n}) \to f_{1}(\omega)$ as $n\to \infty$, since
\begin{enumerate}
 \item  $T_{n} \to T$ due to  Theorem~\ref{t:hit01};
 \item 
   $\omega^{n} \to \omega$ in $\mathbb D^{d}_{\infty}$ means that
 $\omega^{n}(s) \to \omega(s)$ for all $s\in D_{\omega}^{c}$, where 
 $D_{\omega}^{c}$ is the complement of the countable set 
 $$D_{\omega} := \{s\in (0,\infty): \omega \hbox{ is discontinuous at } s\}.$$
Together with
 the continuity of $\ell$, we have 
 $\ell(\omega^{n}(s)) \to \ell(\omega(s))$ almost everywhere on $(0, t)$
 w.r.t. Lebesgue measure.
\item Finally, we have, as $n\to \infty$
$$|f_{1}(\omega^{n}) - f_{1}(\omega) | 
\le  \int_{0}^{T_{n}} e^{-qs} |\ell(\omega^{n}(s)) - 
\ell(\omega(s))| ds + 2K |T_{n} - T| \to 0.$$
\end{enumerate}

\textbf{Step 3.} In this final step we will show that $V_{m}(x_{n}) \to V_{m}(x)$ if $x_{n} \to x\in \bar O$. We first conclude $\mathbb P^{m,x_{n}}$ is weakly convergent to $\mathbb P^{m,x}$, since 
\begin{enumerate}
 \item [] By Theorem 3.2 of \cite{Kun04}, $X$ satisfies
 $$\mathbb E \Big[ \sup_{0\le s \le t} |X_{s}^{m,x_{n}} - X_{s}^{m,x}|^{2} \Big] 
 \le K_{t} |x_{n} -x |^{2} \to 0, \hbox{ as } n\to \infty.$$
 This means $\{X^{m,x_{n}}_{s}: 0\le s \le t\}$ is convergent to $\{X_{s}^{m,x}: 0 \le s\le t\}$ $\mathbb P$-almost surely with respect to $L^{\infty}$, and hence convergent in distribution with respect to Skorohod metric. Weak convergence on any finite time interval implies the weak convergence on the entire time interval by Theorem 16.7 of \cite{Bil99}.
\end{enumerate}
Moreover, in the above two steps, we established $f_{1} + f_{2}$ is 
continuous $\mathbb P^{m,x}$-almost surely.
Then, we apply the continuous mapping theorem and bounded
convergence theorem to obtain 
$$V_{m}(x_{n}) = \mathbb E^{m,x_{n}} [ (f_{1}+f_{2})(X) ] \to 
\mathbb E^{m,x} [(f_{1}+f_{2})(X)]  = V_{m}(x).$$

\hfill $\square$

\subsection{Proofs of Theorems~\ref{t:hit01} and \ref{t:hit02}}\label{sec:secmnrs}
\subsubsection{Sufficiency of working in simpler topologies}
Let $\Lambda_{\infty}$ be the set of continuous and strictly increasing maps of
$[0, \infty)$ to itself. 
Let 
$$
\|\omega\|_{m} = \sup_{0\le t \le m} |\omega(t)|, \ \|\omega\| = \sup_{0\le t < \infty} |\omega(t)|.$$
The topology induced by the above supnorm is finer than Skorohod topology. Therefore, the continuity of $\Pi_{O}$ at $\omega$
with respect to Skorohod topology automatically implies the continuity
with respect to uniform topology. In this below, we will prove that 
the converse is also true:
the continuity with respect to uniform topology implies the continuity of $\Pi_{O}$ with respect to Skorohod topology. This enables us to simplify
our subsequent analysis by working on a series of simpler metrics.

\begin{lemma}
 \label{p:hit01}
 $T_{O^{c}}(\omega \circ \lambda) = \lambda^{-1} \circ T_{O^{c}}(\omega)$ for all $\omega\in \mathbb D^{d}_{\infty}$ and $\lambda\in \Lambda_{\infty}$.
\end{lemma}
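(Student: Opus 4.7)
The plan is to unfold the definition of $T_{O^c}$ on the time-changed path and push the infimum through $\lambda^{-1}$ using the fact that $\lambda\in\Lambda_\infty$ is a continuous strictly increasing bijection of $[0,\infty)$ onto itself with continuous strictly increasing inverse $\lambda^{-1}$ satisfying $\lambda(0)=0$ and $\lambda(t)\to\infty$ as $t\to\infty$.

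First I would note the set-level identity
\[
\{t\ge 0 : (\omega\circ\lambda)(t)\in O^{c}\} \;=\; \lambda^{-1}\bigl(\{s\ge 0 : \omega(s)\in O^{c}\}\bigr),
\]
which is immediate because $\lambda$ is a bijection of $[0,\infty)$ onto itself: $(\omega\circ\lambda)(t)\in O^c$ iff $\lambda(t)\in S$, where $S:=\{s\ge 0:\omega(s)\in O^c\}$, and $\lambda(t)\in S$ iff $t\in\lambda^{-1}(S)$.

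Second, I would use that $\lambda^{-1}$ is continuous and strictly increasing to interchange it with the infimum. Concretely, for any nonempty $S\subset[0,\infty)$ with infimum $s_{0}=\inf S$, strict monotonicity of $\lambda^{-1}$ gives $\lambda^{-1}(s_{0})\le\lambda^{-1}(s)$ for every $s\in S$, so $\lambda^{-1}(s_{0})$ is a lower bound of $\lambda^{-1}(S)$; and picking a sequence $s_{n}\in S$ with $s_{n}\downarrow s_{0}$, continuity gives $\lambda^{-1}(s_{n})\to\lambda^{-1}(s_{0})$, so the bound is attained as an infimum. Hence $\inf\lambda^{-1}(S)=\lambda^{-1}(\inf S)$. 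Combining this with the set identity above yields
\[
T_{O^{c}}(\omega\circ\lambda) \;=\; \inf\lambda^{-1}(S) \;=\; \lambda^{-1}(\inf S) \;=\; \lambda^{-1}\bigl(T_{O^{c}}(\omega)\bigr).
\]

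Finally, I would separately dispatch the degenerate case $S=\emptyset$: then $\{t:(\omega\circ\lambda)(t)\in O^c\}=\lambda^{-1}(\emptyset)=\emptyset$ too, so both sides equal $\infty$ under the stated convention (extending $\lambda^{-1}(\infty)=\infty$, consistent with $\lambda(t)\to\infty$). I do not anticipate any real obstacle here; the lemma is a soft time-change identity, and the only minor care point is confirming $\lambda(0)=0$ (forced by $\lambda:[0,\infty)\to[0,\infty)$ strictly increasing and continuous) so that no positive mass of the domain is lost at the origin when inverting.
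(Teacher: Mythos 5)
Your proposal is correct and follows essentially the same route as the paper, which simply writes $T_{O^{c}}(\omega \circ \lambda) = \inf\{t: \omega\circ \lambda(t) \notin O\} = \lambda^{-1}(\inf\{\lambda(t): \omega(\lambda(t)) \notin O\}) = \lambda^{-1} \circ T_{O^{c}}(\omega)$ in one line, implicitly using exactly the bijectivity and monotone-continuity facts you spell out. Your version merely makes explicit the interchange of $\lambda^{-1}$ with the infimum and the empty-set case, which the paper leaves to the reader.
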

\begin{proof}
 $T_{O^{c}}(\omega \circ \lambda) = \inf\{t>0: \omega\circ \lambda(t) \notin O\}
 = \lambda^{-1} (\inf\{\lambda (t)>0: \omega(\lambda(t)) \notin O\} ) = \lambda^{-1} \circ T_{O^{c}}(\omega).$
\end{proof}
\begin{lemma}\label{l:hit01}
\begin{enumerate}
 \item  If $T_{O^{c}}^{m}$ is  lower semicontinuous  w.r.t $\|\cdot\|_{m}$ for all integer $m$, then
 $T_{O^{c}}$ is  lower semicontinuous  w.r.t $d_{\infty}^{o}$.
 \item If $T_{O^{c}}^{m}$ is upper semicontinuous w.r.t $\|\cdot\|_{m}$ for all integer $m$, then
 $T_{O^{c}}$ is upper semicontinuous w.r.t $d_{\infty}^{o}$.
\end{enumerate}
\end{lemma}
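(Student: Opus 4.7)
The plan is to lift the hypothesized semicontinuity on each supnorm space $(\mathbb{D}^d_m, \|\cdot\|_m)$ to the Skorohod space $(\mathbb{D}^d_\infty, d^o_\infty)$, using the time-change characterization of Skorohod convergence together with the commutation identity $T_{O^c}(\omega\circ\lambda)=\lambda^{-1}\circ T_{O^c}(\omega)$ from Lemma~\ref{p:hit01}. Specifically, whenever $\omega_n\to\omega$ in $d^o_\infty$ I may pick $\lambda_n\in\Lambda_\infty$ with $\sup_{t\ge 0}|\lambda_n(t)-t|\to 0$ such that $\|\omega_n\circ\lambda_n-\omega\|_m\to 0$ for every integer $m$ (it suffices to take $m$ to be a continuity point of $\omega$, of which there are cocountably many). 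Note that $\lambda_n^{-1}$ automatically satisfies the same uniform-closeness estimate $\sup_{s\ge 0}|\lambda_n^{-1}(s)-s|\to 0$.

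For the lower semicontinuity claim, fix any integer $m>T_{O^c}(\omega)$ when $T_{O^c}(\omega)<\infty$, so that $T^m_{O^c}(\omega)=T_{O^c}(\omega)$. Applying the lsc hypothesis to $\omega_n\circ\lambda_n\to\omega$ in $\|\cdot\|_m$, together with Lemma~\ref{p:hit01}, yields
\begin{equation*}
  \liminf_{n\to\infty}\bigl(\lambda_n^{-1}(T_{O^c}(\omega_n))\wedge m\bigr)\;=\;\liminf_{n\to\infty}T^m_{O^c}(\omega_n\circ\lambda_n)\;\ge\;T^m_{O^c}(\omega).
\end{equation*}
Because $\lambda_n^{-1}$ is uniformly close to the identity, the left side equals $\liminf_n(T_{O^c}(\omega_n)\wedge m)$, and the strict choice $m>T_{O^c}(\omega)$ lets me drop the truncation to obtain $\liminf_n T_{O^c}(\omega_n)\ge T_{O^c}(\omega)$. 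If $T_{O^c}(\omega)=\infty$, the same argument gives $\liminf_n(T_{O^c}(\omega_n)\wedge m)\ge m$ for every $m$, hence $T_{O^c}(\omega_n)\to\infty$.

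For the upper semicontinuity claim, the analogous computation under the usc hypothesis produces
\begin{equation*}
  \limsup_{n\to\infty}\bigl(\lambda_n^{-1}(T_{O^c}(\omega_n))\wedge m\bigr)\;\le\;T^m_{O^c}(\omega)\;=\;T_{O^c}(\omega)\;<\;m,
\end{equation*}
for any fixed integer $m>T_{O^c}(\omega)$. The strict inequality on the right forces $\lambda_n^{-1}(T_{O^c}(\omega_n))<m$ for all sufficiently large $n$, so the truncation becomes inactive in the limit. Hence $T_{O^c}(\omega_n)<\lambda_n(m)$ is eventually bounded, and the uniform closeness of $\lambda_n$ (and $\lambda_n^{-1}$) to the identity then yields $\limsup_n T_{O^c}(\omega_n)=\limsup_n\lambda_n^{-1}(T_{O^c}(\omega_n))\le T_{O^c}(\omega)$.

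The main (though modest) piece of bookkeeping is making the truncation at level $m$ and the reparameterization $\lambda_n^{-1}$ commute asymptotically, which is the only subtlety in passing from the supnorm hypothesis to the Skorohod conclusion. This is handled by choosing $m$ strictly above $T_{O^c}(\omega)$ so that the truncation is inactive in the limit, together with the uniform estimate $\sup_t|\lambda_n^{-1}(t)-t|\to 0$ on any fixed bounded interval; no deeper obstacle is anticipated.
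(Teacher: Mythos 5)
Your proof is correct and follows essentially the same route as the paper's: both pass through Billingsley's time-change characterization of $d^{o}_{\infty}$-convergence, use Lemma~\ref{p:hit01} to commute $T_{O^{c}}$ with the reparameterization, control the discrepancy via $\|1-\lambda_n^{-1}\|\to 0$, and remove the truncation by fixing $m>T_{O^{c}}(\omega)$. Your explicit treatment of the case $T_{O^{c}}(\omega)=\infty$ is a small bonus over the paper's ``otherwise it's obvious,'' but the argument is the same.
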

\begin{proof}
 We assume $T_{O^{c}}(\omega) \in (0,\infty)$, otherwise it's obvious.
 Let $\lim_{n} d^{o}_{\infty} (\omega_{n}, \omega) = 0.$ By Theorem 16.1 of \cite{Bil99}, 
 there exists $\lambda_{n}\in \Lambda_{\infty}$ such that
 $$\lim_{n} \|\lambda_{n} - 1 \| = 0$$
 and
 $$\lim_{n} \|\omega_{n}\circ \lambda_{n} - \omega\|_{m} = 0, \ \forall m\in \mathbb N.$$
\begin{enumerate}
 \item  We suppose $T_{O^{c}}^{m}$ is  lower semicontinuous  w.r.t. $\|\cdot\|_{m}$ for every integer $m$. Then, we have
 $$\lim\inf_{n} T_{O^{c}}^{m} (\omega_{n}\circ \lambda_{n}) \ge T_{O^{c}}^{m}(\omega).$$
 Also, we have by Lemma~\ref{p:hit01}
 $$|T_{O^{c}}^{m}(\omega_{n}) - T_{O^{c}}^{m}(\omega_{n}\circ \lambda_{n})| 
 = |T_{O^{c}}(\omega_{n}) \wedge m - \lambda_{n}^{-1}\circ T_{O^{c}}(\omega_{n}) \wedge m| 
 \le \|1 - \lambda_{n}^{-1}\| \to 0.$$
 Thus, we have
 $$\lim\inf_{n} T_{O^{c}}^{m}(\omega_{n}) \ge T_{O^{c}}^{m}(\omega).$$
 Therefore, for a big enough $m$ such that $m > T_{O^{c}}(\omega)$ holds, we have
 $$\lim\inf_{n} T_{O^{c}}(\omega_{n}) \ge \lim\inf_{n}  T_{O^{c}}^{m}(\omega_{n})
 \ge  T_{O^{c}}^{m}(\omega) = T_{O^{c}}(\omega).$$
 This implies $T_{O^{c}}$ is also  lower semicontinuous  w.r.t. $d_{\infty}^{o}$.
 
 \item We suppose $T_{O^{c}}^{m}$ is upper semicontinuous w.r.t. $\|\cdot\|_{m}$ for every integer $m$. Then, we have
 $$\lim\sup_{n} T_{O^{c}}^{m} (\omega_{n}\circ \lambda_{n}) \le T_{O^{c}}^{m}(\omega).$$
 Also, we have
 similarly $|T_{O^{c}}^{m}(\omega_{n}) - T_{O^{c}}^{m}(\omega_{n}\circ \lambda_{n})| \to 0$
 as $n\to \infty$ by Lemma~\ref{p:hit01}, and conclude
 $$\lim\sup_{n} T_{O^{c}}^{m}(\omega_{n}) \le T_{O^{c}}^{m}(\omega) \le T_{O^{c}}(\omega) \quad
 \hbox{ for all integer } m.$$
 Now we fix an integer $m > T_{O^{c}}(\omega) +1$. 
 This means, $\forall \epsilon \in (0,1)$, there exists $N_{\epsilon}$ such
 that
 $$T_{O^{c}}(\omega_{n}) \wedge m \le T_{O^{c}}(\omega) + \epsilon, \quad \forall n \ge N_{\epsilon}.$$
 Since $m > T_{O^{c}}(\omega) + \epsilon$, the left hand side $T_{O^{c}}(\omega_{n}) \wedge m$ must be equal to $T_{O^{c}}(\omega_{n})$, i.e. 
  $$T_{O^{c}}(\omega_{n}) \le T_{O^{c}}(\omega) + \epsilon, \quad \forall n \ge N_{\epsilon}.$$
 This implies $T_{O^{c}}$ is also upper semicontinuous w.r.t. $d_{\infty}^{o}$.
\end{enumerate}
\end{proof}

\subsubsection{The problem in dimension one}
In this below, we will identify the continuity set in one dimensional c\`adl\`ag space for the mapping $T_{O^{c}}$ with respect to uniform topology
induced by supnorm.
\begin{lemma}
 \label{l:hit03}
 The mapping $\omega \mapsto T_{(-\infty, 0)}^{m}(\omega) $ is upper semicontinuous in $\mathbb D^{1}_{\infty}$ w.r.t. $\|\cdot\|_{m}$  for every $m\in \mathbb N$.
\end{lemma}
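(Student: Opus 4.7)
Fix $\omega \in \mathbb D^{1}_{\infty}$, set $T = T_{(-\infty,0)}^{m}(\omega)$, and suppose $\|\omega_{n} - \omega\|_{m} \to 0$. The goal is to show $\limsup_{n} T_{(-\infty,0)}^{m}(\omega_{n}) \le T$. If $T = m$, the conclusion is immediate because $T_{(-\infty,0)}^{m}(\omega_{n}) \le m = T$ for every $n$, so the only case requiring work is $T < m$.

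For the main case I would exploit the openness of the target set $(-\infty, 0)$. By the definition of the infimum, for any $\epsilon > 0$ small enough that $T + \epsilon \le m$, there exists $t^{\star} \in [T, T + \epsilon)$ with $\omega(t^{\star}) < 0$. Set $\delta := -\omega(t^{\star})/2 > 0$. For all $n$ large enough we have $\|\omega_{n} - \omega\|_{m} < \delta$, hence
\[
 \omega_{n}(t^{\star}) \le \omega(t^{\star}) + \|\omega_{n} - \omega\|_{m} < \omega(t^{\star}) + \delta = -\delta < 0,
\]
so that $\omega_{n}$ has entered $(-\infty, 0)$ by time $t^{\star}$. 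Since $t^{\star} \le m$, this yields $T_{(-\infty,0)}^{m}(\omega_{n}) \le t^{\star} < T + \epsilon$ for all sufficiently large $n$. Taking $\limsup_{n}$ and then letting $\epsilon \downarrow 0$ gives $\limsup_{n} T_{(-\infty,0)}^{m}(\omega_{n}) \le T$, which is the desired upper semicontinuity.

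I do not anticipate any real obstacle here; the argument is essentially a routine consequence of the openness of $(-\infty, 0)$, together with the fact that uniform closeness on $[0, m]$ transfers the strict inequality $\omega(t^{\star}) < 0$ to $\omega_{n}(t^{\star}) < 0$. It is worth emphasizing the asymmetry: \emph{lower} semicontinuity fails in general, as illustrated by Example~\ref{e:01}, since $\omega$ could graze the boundary $\{0\}$ while the approximants $\omega_{n}$ fail to dip below it. Upper semicontinuity is the ``easy'' direction precisely because a strict crossing of an open target is robust under small uniform perturbations.
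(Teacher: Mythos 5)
Your proof is correct and follows essentially the same argument as the paper's: in the nontrivial case you pick a time $t^{\star}$ within $\epsilon$ of the entrance time at which $\omega$ is strictly negative (using openness of $(-\infty,0)$), and transfer this strict inequality to $\omega_{n}$ via uniform closeness on $[0,m]$. Your handling of the truncation case $T=m$ as trivially true is in fact slightly cleaner than the paper's first case, which proves the stronger statement $T^{m}_{(-\infty,0)}(\omega_{n})=m$ when it only needs an upper bound.
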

\begin{proof}
For convenience, we denote $\hat T_{m}(\omega) = T_{(-\infty, 0)} (\omega) \wedge m$.
 It's enough to show that
\begin{center}
 If $\|\omega_{n}-\omega\|_{m} \to 0$, then $\lim\sup_{n} \hat T_{m}(\omega_{n}) \le \hat T_{m}(\omega)$. 
\end{center}
We prove it in two cases separately:
\begin{enumerate}
 \item Assume $\inf_{0\le t \le m} \omega(t) >0$. This implies $\hat T_{m}(\omega) = m$. 
 Given $\|\omega_{n}-\omega\|_{m} \to 0$, there exists $N$, such that
 $$\forall n> N, \ \|\omega_{n}-\omega\|_{m} < \frac 1 2 \inf_{0\le t \le m} \omega(t).$$
 This yields
 $$\forall n> N, \ \forall s\in [0,m], \ 
 \omega_{n}(s)-\omega(s) > -\frac 1 2 \inf_{0\le t \le m} \omega(t).$$
 Therefore, 
 $$\forall n> N, \ \forall s\in [0,m], \ 
 \omega_{n}(s)>0,$$
 or equivalently, $\hat T_{m}(\omega_{n}) = m$ for all $n>N$.
 This proves the conclusion of the first case.

 \item Assume $\inf_{0\le t \le m} \omega(t) \le 0$. Fix arbitrary $\epsilon>0$, then
 $$\exists t_{\epsilon} \in [\hat T_{m}(\omega), \hat T_{m}(\omega) + \epsilon) 
 \hbox{ such that } \omega(t_{\epsilon})< 0.$$
 Given  $\|\omega_{n}-\omega\|_{m} \to 0$, 
 $$\exists N \ \hbox{ such that } \ \|\omega_{n}-\omega\|_{m} < \frac 1 2 |\omega(t_{\epsilon})|, \ 
 \forall n\ge N.$$
 In particular, one can write
 $\omega_{n}(t_{\epsilon}) - \omega(t_{\epsilon}) < - \frac 1 2 \omega(t_{\epsilon})$, 
 or equivalently
 $$\exists N \hbox{ such that } \ \omega_{n}(t_{\epsilon}) < 0,  \ 
 \forall n\ge N.$$
Therefore, $\hat T_{m}(\omega_{n}) \le t_{\epsilon} \le \hat T_{m}(\omega) + \epsilon$ for
all $n\ge N$. By taking $\lim\sup_{n}$ both sides, we have
$$\lim\sup_{n} \hat T_{m}(\omega_{n}) \le \hat T_{m}(\omega) + \epsilon$$
and the conclusion follows due to the arbitrary selection of $\epsilon$.
\end{enumerate}

\end{proof}

\begin{lemma}
 \label{l:hit04}
 $\omega\mapsto T_{(-\infty, 0]}^{m} (\omega_{*})$ is  lower semicontinuous  in $\mathbb D^{1}_{\infty}$ w.r.t. $\|\cdot\|_{m}$ for every
 $m\in \mathbb N$, where $$\omega_{*} (t) = \lim\inf_{s\to t} \omega(s), \ \forall t>0$$ is 
 the lower envelope of $\omega$.
\end{lemma}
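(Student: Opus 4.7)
Write $S(\omega) := T^m_{(-\infty,0]}(\omega_*)$. The plan is to show $\liminf_n S(\omega_n) \geq S(\omega)$ whenever $\|\omega_n - \omega\|_m \to 0$ via a contradiction argument that mirrors the structure of Lemma~\ref{l:hit03}, but exploits the LSC envelope $\omega_*$ to turn loose infima into attained minima.

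The first key observation I would record is that $\omega_*$ is lower semicontinuous (for c\`adl\`ag $\omega$, $\omega_*(t) = \min(\omega(t), \omega(t-))$), so the sublevel set $\{t \in [0,m] : \omega_*(t) \leq 0\}$ is closed, and whenever non-empty its infimum is attained in $[0,m]$. In particular, when $S(\omega_n) < m$ there is a time $t_n \in [0, S(\omega_n)]$ at which $\omega_n^*(t_n) \leq 0$.

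The second step is the contradiction. Suppose $\liminf_n S(\omega_n) < \delta < S(\omega)$ for some $\delta < m$; pass to a subsequence so that $S(\omega_{n_k}) \leq \delta$, giving $t_{n_k} \in [0,\delta]$ with $\omega_{n_k}^*(t_{n_k}) \leq 0$. By compactness of $[0,\delta]$, extract a further subsequence with $t_{n_k} \to t^* \in [0,\delta]$. From the very definition of the LSC envelope, I can pick $s_k$ with $|s_k - t_{n_k}| < 1/k$ and $\omega_{n_k}(s_k) < 1/k$, so $s_k \to t^*$ and
$$\omega(s_k) \leq \omega_{n_k}(s_k) + \|\omega - \omega_{n_k}\|_m \longrightarrow 0.$$
Hence $\omega_*(t^*) \leq \liminf_k \omega(s_k) \leq 0$, which forces $T_{(-\infty,0]}(\omega_*) \leq t^*$, and thus $S(\omega) \leq t^* \leq \delta < S(\omega)$, a contradiction. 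The case $S(\omega) = m$ is subsumed, since the same argument applied with any $\delta < m$ produces a $t^* \in [0,\delta]$ with $\omega_*(t^*) \leq 0$, forcing $S(\omega) \leq \delta < m$.

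The main obstacle I anticipate is the diagonalisation of two nested $\liminf$'s: $\omega_n^*(t_n)$ is itself a $\liminf$ of $\omega_n$, and one must choose $s_k$ that simultaneously converges to $t^*$ and realises a value of $\omega_{n_k}$ close to zero. The LSC regularisation is exactly what makes this routine; without passing to $\omega_*$, the matching times realising $\omega_n(\cdot) \leq 0$ could drift away from $t^*$, which is precisely the failure mode behind the asymmetry between Lemmas~\ref{l:hit03} and \ref{l:hit04} (closed vs.\ open target set, raw path vs.\ LSC envelope).
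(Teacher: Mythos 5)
Your argument is correct, but it is organized differently from the paper's. The paper introduces the running minimum $M[\omega](t)=\inf_{0\le s\le t}\omega(s)$ and argues in the contrapositive direction: lower semicontinuity of $\omega_{*}$ gives $M[\omega](t)>0$ strictly before the entrance time, so any $\omega_{n}$ within half that margin in $\|\cdot\|_{m}$ stays strictly positive on $[0,\tilde T_{m}(\omega)-\epsilon]$ and hence cannot enter earlier; this yields an explicit quantitative threshold on $\|\omega_{n}-\omega\|_{m}$ and splits into the two cases $\tilde T_{m}(\omega)=m$ and $\tilde T_{m}(\omega)<m$. You instead argue in the forward direction by contradiction: you use closedness of the sublevel set $\{t:(\omega_{n})_{*}(t)\le 0\}$ to attain the entrance times $t_{n_{k}}$, extract a limit point $t^{*}\in[0,\delta]$ by compactness, and diagonalize the nested liminfs to transport the near-zero values of $\omega_{n_{k}}$ to $\omega$ at $t^{*}$. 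Both proofs rest on the same underlying facts (lower semicontinuity of the envelope plus compactness of a bounded time interval), but the paper packages the compactness into the single inequality $M[\omega](\tilde T_{m}(\omega)-\epsilon)>0$, which makes the perturbation step one line and gives an explicit modulus, whereas your version trades that for a subsequence extraction and the careful choice of the times $s_{k}$, and handles $\tilde T_{m}(\omega)=m$ without a separate case. The only points worth tightening in your write-up are the convention for $\omega_{*}$ at $t=0$ (the liminf is one-sided there) and the remark that $s_{k}\in[0,m]$ for large $k$ so that the $\|\cdot\|_{m}$ bound applies; neither affects the validity of the argument.
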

\begin{proof}
For simplicity, we denote 
$$\tilde T_{m}(\omega) = T_{(-\infty, 0]} (\omega_{*}) \wedge m 
\hbox{ and }
M[\omega](t) = \inf_{0\le s \le t} \omega(s).$$ Note that $M[\omega] = M[\omega_{*}]$ is a 
non-increasing process. It's enough to show that
\begin{center}
 If $\|\omega_{n}-\omega\|_{m} \to 0$, then $\lim\inf_{n} \tilde T_{m}(\omega_{n}) 
 \ge \tilde T_{m}(\omega)$. 
\end{center}
\begin{enumerate}
 \item Assume $\tilde T_{m}(\omega) = m$. This implies 
 $M[\omega](m) = M[\omega_{*}](m) >0$, otherwise $\tilde T_{m}(\omega) < m$.  
 Given $\|\omega_{n}-\omega\|_{m} \to 0$,
 $$\exists N, \hbox{ such that } \|\omega_{n}-\omega\|_{m}  < \frac 1 2 M[\omega](m), \
 \forall n \ge N,$$
 which implies, there exists $N$ such that
 $$\omega_{n}(t) > \omega(t) -  \frac 1 2 M[\omega](m)
 \ge \frac 1 2 M[\omega](m)>0, \ \forall t \in (0, m), \ \forall n \ge N.$$
 Hence, $\tilde T_{m}(\omega_{n}) = m$ for all $n\ge N$, and this proves
 the continuity
 at $\omega$ for this case.
 \item Assume $\tilde T_{m}(\omega) < m$. Since  $\omega_{*}$ is  lower semicontinuous , we have
 $$M[\omega](\tilde T_{m}(\omega)) \le 0, \hbox{ and } M[\omega](t) >0, \forall t < \tilde T_{m}(\omega).$$
Fix arbitrary $\epsilon>0$. Then, we have 
$M[\omega] (\tilde T_{m}(\omega) - \epsilon) >0$, and 
$$\exists N, \hbox{ such that } \|\omega_{n} - \omega \|_{m} < \frac 1 2 M[\omega](\tilde T_{m}(\omega) - \epsilon), \ \forall n\ge N.$$
This leads to, for all $n\ge N$ and $t < \tilde T_{m}(\omega) - \epsilon$
$$\omega_{n}(t) > \omega (t) - \frac 1 2 M[\omega](\tilde T_{m}(\omega) - \epsilon)
\ge \frac 1 2 M[\omega](\tilde T_{m}(\omega) - \epsilon)>0.
$$
In other words, we have 
$\tilde T_{m}(\omega_{n}) \ge \tilde T_{m}(\omega) - \epsilon$ for all $n \ge N$. 
So we conclude $\lim\inf_{n} \tilde T_{m}(\omega_{n}) 
 \ge \tilde T_{m}(\omega)$ for the this case.
\end{enumerate}
\end{proof}
\begin{lemma}
 \label{p:hit03}
\begin{enumerate}
\item   $T_{(-\infty, 0]}^{m}$ is upper semicontinuous on 
$$\{\omega \in \mathbb D^{1}_{\infty}: 
T_{(-\infty, 0]}^{m}(\omega) = T_{(-\infty, 0)}^{m}(\omega) \} \hbox{ w.r.t. } \|\cdot\|_{m};$$
\item  $T_{(-\infty, 0]}^{m}$ is  lower semicontinuous  on 
$$\{\omega \in \mathbb D^{1}_{\infty}: 
T_{(-\infty, 0]}^{m}(\omega) = T_{(-\infty, 0]}^{m}(\omega_{*}) \} \hbox{ 
w.r.t. } \|\cdot\|_{m}.$$
\end{enumerate}
\end{lemma}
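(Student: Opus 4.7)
The plan is to obtain each part of the lemma as a one-line deduction from one of the two preceding semicontinuity results, bridged by an elementary pointwise monotonicity observation. Specifically, for every $\omega \in \mathbb D^{1}_{\infty}$ we have
$$T_{(-\infty,0]}^{m}(\omega) \le T_{(-\infty,0)}^{m}(\omega) \quad \text{and} \quad T_{(-\infty,0]}^{m}(\omega_{*}) \le T_{(-\infty,0]}^{m}(\omega).$$
The first inequality is immediate from $(-\infty,0)\subset(-\infty,0]$, and the second from $\omega_{*}(s)\le\omega(s)$ for every $s$, so that $\omega_{*}$ reaches the closed half-line $(-\infty,0]$ no later than $\omega$ does. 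The two hypotheses defining the sets on which we claim semicontinuity are exactly what forces equality in these inequalities at the target point $\omega$.

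For part (1), fix $\omega$ with $T_{(-\infty,0]}^{m}(\omega) = T_{(-\infty,0)}^{m}(\omega)$ and let $\|\omega_{n}-\omega\|_{m}\to 0$. Applying the first monotonicity to each $\omega_{n}$ and then Lemma~\ref{l:hit03} to the sequence gives
$$\limsup_{n} T_{(-\infty,0]}^{m}(\omega_{n}) \le \limsup_{n} T_{(-\infty,0)}^{m}(\omega_{n}) \le T_{(-\infty,0)}^{m}(\omega) = T_{(-\infty,0]}^{m}(\omega),$$
which is the desired upper semicontinuity of $T_{(-\infty,0]}^{m}$ at $\omega$.

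For part (2), fix $\omega$ with $T_{(-\infty,0]}^{m}(\omega) = T_{(-\infty,0]}^{m}(\omega_{*})$ and let $\|\omega_{n}-\omega\|_{m}\to 0$. Applying the second monotonicity to each $\omega_{n}$ and then Lemma~\ref{l:hit04} to the sequence gives
$$\liminf_{n} T_{(-\infty,0]}^{m}(\omega_{n}) \ge \liminf_{n} T_{(-\infty,0]}^{m}((\omega_{n})_{*}) \ge T_{(-\infty,0]}^{m}(\omega_{*}) = T_{(-\infty,0]}^{m}(\omega),$$
which is the required lower semicontinuity. The heavy lifting has already been carried out in Lemmas~\ref{l:hit03} and \ref{l:hit04}, so the only substantive content is identifying the hypotheses that make the monotonicity inequalities tight at the limit point; once this is observed, no further obstacle arises.
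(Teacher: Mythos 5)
Your proof is correct and follows essentially the same route as the paper's: both parts are obtained by combining the elementary monotonicity inequalities $T_{(-\infty,0]}^{m}\le T_{(-\infty,0)}^{m}$ and $T_{(-\infty,0]}^{m}(\omega_{*})\le T_{(-\infty,0]}^{m}(\omega)$ with Lemmas~\ref{l:hit03} and \ref{l:hit04}, and then using the defining hypothesis of each set to close the chain with an equality. The paper's proof is exactly this two-line chain of inequalities, so there is nothing further to reconcile.
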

\begin{proof}
If (a) $\omega_{n} \to \omega$ w.r.t. $\|\cdot\|_{m}$; and 
(b) $T_{(-\infty, 0]}^{m}(\omega) = T_{(-\infty, 0)}^{m}(\omega) $, then
Lemma~\ref{l:hit03} implies 
$$\lim_{n}\sup T^{m}_{(-\infty, 0]}(\omega_{n}) 
\le \lim_{n}\sup T^{m}_{(-\infty, 0)}(\omega_{n})
\le T^{m}_{(-\infty, 0)}(\omega) =  T^{m}_{(-\infty, 0]}(\omega),$$ 
which asserts the upper semicontinuity.

Similarly, if (a) $\omega_{n} \to \omega$ w.r.t. $\|\cdot\|_{m}$; and 
(b) $T_{(-\infty, 0]}^{m}(\omega) = T_{(-\infty, 0]}^{m}(\omega_{*}) $, then
Lemma~\ref{l:hit04} implies 
$$\lim_{n}\inf T^{m}_{(-\infty, 0]}(\omega_{n}) 
\ge \lim_{n}\inf T^{m}_{(-\infty, 0]}(\omega_{n,*})
\ge T^{m}_{(-\infty, 0]}(\omega_{*}) =  T^{m}_{(-\infty, 0]}(\omega),$$ 
which asserts  the lower semicontinuity.
\end{proof}

\subsubsection{Proof of Theorem \ref{t:hit01}} \label{sec:pfhit01}
\hfill

\textbf{Step 1.} The proof relies on a dimension reduction.
Let us define
the signed distance function 
\begin{equation} \label{eq:rho2}
 \rho(x) = \left \{
\begin{array}
 {ll}
 dist (x, \partial O) & \hbox{ if } x\in O; \\
 - dist (x, \partial O) & \hbox{ otherwise}
\end{array} \right.
 \end{equation}
 
Note that,  if $O$ is open, then
 $$T_{O^{c}}(\omega) = \inf\{t\ge 0: \omega(t) \notin O\} = \inf\{t\ge 0: \rho \circ \omega(t) \le 0\} = 
 T_{(-\infty, 0]}(\rho\circ \omega),$$
 and 
 $$T_{\bar O^c}(\omega) = \inf\{t\ge 0: \omega(t) \notin \bar O\} = \inf\{t\ge 0: \rho \circ \omega(t) < 0\} = 
 T_{(-\infty, 0)}(\rho\circ \omega).$$
In other words, we have 
\begin{equation}\label{eq:dd}
 T_{O^{c}} = T_{(-\infty, 0]} \circ \rho, \  T_{\bar O^c} = T_{(-\infty, 0)} \circ \rho,\ 
 \forall \omega \in \mathbb D_{\infty}^{d}
 \hbox{ for all open set } O. 
\end{equation}
This simple fact enables us to generalize 1-d result of Lemma~\ref{p:hit03} to the multidimensional case.
 
\textbf{Step 2.}
 First assume $d = 1$ and $O = (0, \infty)$.  Lemma~\ref{p:hit03} and
 Lemma~\ref{l:hit01} implies $T_{(-\infty, 0]}$ is continuous on 
 $$B = \{\omega\in \mathbb D^{1}_{\infty}: T_{(-\infty, 0]}(\omega_{*}) =
  T_{(-\infty, 0]}(\omega) =  T_{(-\infty, 0)}(\omega)\}.$$
 Recall that we want to show $T_{(-\infty, 0]}$ is continuous on  
 $$\Gamma_{(0,\infty)} = \{\omega\in \mathbb D^{1}_{\infty}: T_{(-\infty, 0]}(\omega^{-}) =
  T_{(-\infty, 0]}(\omega) =  T_{(-\infty, 0)}(\omega)\}.$$
  Hence, it's enough to show $B = \Gamma_{(0,\infty)}$.  
\begin{enumerate}
 \item By an inequality of $ T_{(-\infty, 0]}(\omega_{^{-}*}) \le T_{(-\infty, 0]} (\omega^{-})
 \le T_{(-\infty, 0]} (\omega)$, we have  $B \subset \Gamma_{(0,\infty)}$.
 \item If there exists $\omega\in \Gamma_{(0,\infty)} \setminus B$, then 
 $T_{(-\infty, 0]}(\omega_{*}) < T_{(-\infty, 0]}(\omega^{-})$. 
 This yields that
 $$\omega_{*}(T_{(-\infty, 0]}(\omega_{*})) \le 0 < \omega^{-} (T_{(-\infty, 0]}(\omega_{*})) ,$$
 which again implies, with the notion of $\Delta \omega(t) = \omega(t) - \omega(t-)$
 $$\Delta \omega(T_{(-\infty, 0]}(\omega_{*})) < 0, \quad  \omega(T_{(-\infty, 0]}(\omega_{*})) = 
 \omega_{*}(T_{(-\infty, 0]}(\omega_{*})) \le 0.$$
 Hence, we have $T_{(-\infty, 0]}(\omega) = T_{(-\infty, 0]}(\omega_{*})$, which is
  a contradiction to $\omega\notin B$.
\end{enumerate}
In conclusion, we obtain $B = \Gamma_{(0,\infty)}$ and
$T_{(-\infty, 0]}$ is continuous at any $\omega\in \Gamma_{(0,\infty)}$. 

\textbf{Step 3.}
Now we turn to the general case of $d\ge 1$. If 
$ \omega_{n} \to \omega \in \Gamma_{O}$, then
$\rho\circ \omega_{n} \to \rho \circ \omega \in \Gamma_{(0,\infty)}$ by the continuity of
$\rho$. Thanks to \eqref{eq:dd} and the continuity of $T_{(-\infty, 0]}$ on 
$\Gamma_{(0,\infty)}$, we conclude,
$$T_{O^{c}}(\omega_{n}) = T_{(-\infty, 0]}(\rho(\omega_{n})) \to
T_{(-\infty, 0]}(\rho(\omega)) = T_{O^{c}}(\omega).$$
\hfill $\square$

\subsubsection{Proof of Theorem \ref{t:hit02}}\label{sec:pfhit02}

 Let $\omega^{n} \to \omega \in \hat \Gamma_{O}$ 
 in Skorohod topology, and denote for simplicity that
 $$T = T_{O^{c}}(\omega), T_{n} = T_{O^{c}} (\omega^{n}).$$
 Then, we can write $\omega(T)  = \Pi_{O}(\omega)$ and $\omega(T_{n}) = 
 \Pi_{O}(\omega^{n})$. We want to show that
 $\omega(T_{n}) \to \omega(T)$ as $n\to \infty$.
\begin{enumerate}
 \item If $\Pi_{O} (\omega^{-}) = \Pi_{O}(\omega)$, then $\Pi_{O} (\omega) \in \partial O$. Since $\omega$ is continuous at $T$, $\omega^{n} \to \omega$ 
 in Skorohod metric implies that $\omega^{n} \to \omega$ uniformly on some interval $(T-\epsilon, T+\epsilon)$ for $\epsilon>0$, i.e.
 $$\sup_{|s-T| < \epsilon} |\omega^{n}(s) - \omega(s)| \to 0, 
 \hbox{ as } n\to \infty.$$
 Sine $T_{n} \to T$ by Theorem~\ref{t:hit01}, there exists $N$ such that
 $T_{n} \in (T-\epsilon, T+ \epsilon)$ for all $n\ge N$. Together with
 the continuity of $\omega$ at $T$, we conclude that
 $$ 
\begin{array}{ll}
 |\omega^{n} (T_{n}) - \omega (T)| & \le 
 |\omega^{n}(T_{n}) - \omega(T_{n})| + |\omega(T_{n}) - \omega(T)|
 \\ & \le \sup_{|s-T| < \epsilon} |\omega^{n}(s) - \omega(s)|  + 
 |\omega(T_{n}) - \omega(T)| 
 \\ & 
 \to 0, \hbox{ as } n\to 0
\end{array}
 $$
 
\item 
If $\Pi_{O} (\omega^{-}) \neq \Pi_{O}(\omega)$, then $\omega \in \hat \Gamma_{O}$
means that $\omega^{-}(T) \in O$ and $\omega(T) \in O^{c}$. 
\begin{enumerate}
 \item \label{list:2a} If $\|\omega^{n} - \omega\|_{m} \to 0$ for some $m>T+1$, then there 
 exists $N_{1}$ such that $T_{n} < m$ for all $n\ge N_{1}$.
 Since $T_{O^{c}} (\omega^{-}) = T_{O^{c}}(\omega)$, we can also define
 $$\epsilon := \sup_{0\le s \le T} \rho(\omega^{-}(s)) >0,$$
 where $\rho$ is the signed distance to the boundary as of \eqref{eq:rho2}.
 Note that, there exists $N_{2} > N_{1}$ such that
 $$\|\omega^{n} - \omega \|_{m} < \frac 1 2 \epsilon, \ \forall n > N_{2}.$$
 Therefore, $\sup_{0\le s \le T} \rho(\omega^{n}(s)) >0$ and 
 $T_{n} \ge T$. Hence, $T_{n} \downarrow T$ as $n\to \infty$, and the
 right continuity of $\omega$ leads to 
 $$ 
\begin{array}{ll}
 |\omega^{n} (T_{n}) - \omega (T)| & \le 
 |\omega^{n}(T_{n}) - \omega(T_{n})| + |\omega(T_{n}) - \omega(T)|
 \\ & \le \sup_{|s-T| < \epsilon} |\omega^{n}(s) - \omega(s)|  + 
 |\omega(T_{n}) - \omega(T)| 
\\ & \to 0, \hbox{ as } n\to 0
\end{array}
 $$
 \item 
 If $d^{o}_{\infty} (\omega^{n}, \omega) \to 0$, then there exists $\lambda_{n}\in \Lambda_{\infty}$ such that
 $$\lim_{n} \|\lambda_{n} - 1 \| = 0$$
 and
 $$\lim_{n} \|\omega^{n}\circ \lambda_{n} - \omega\|_{m} = 0, \ \forall m\in \mathbb N.$$
 Applying Lemma~\ref{p:hit01}, we have
 $$\omega^{n} (T_{O^{c}}(\omega^{n})) =
  \omega^{n} (\lambda_{n} \circ T_{O^{c}} (\omega^{n} \lambda_{n}))
  = \hat \omega^{n} (T_{O^{c}}(\hat \omega^{n})), $$
  where $\hat \omega^{n} = \omega^{n}\circ \lambda_{n}$.
  Since $\lim_{n}\|\hat \omega^{n} - \omega \|_{m} = 0$ for all $m\in \mathbb N$, 
  we can repeat the same proof of Step~\ref{list:2a}, and obtain
  $\hat \omega^{n} (T_{O^{c}}(\hat \omega^{n})) \to  \omega(T)$, which in turn
  implies that $\omega^{n}(T_{n}) \to \omega(T)$.

\end{enumerate}
 
\end{enumerate}

\hfill $\square$

\appendix

\section{Equivalence of Definition \ref{d:01v5} and Definition 2 of \cite{BI08}} 
\label{s:def}

\subsection{Closure of the test function space}

Recall that test function spaces $J^{\pm}(u,x)$ were defined in \eqref{eq:J+} and \eqref{eq:J-}.
Next, we shall define 
the closure of test function space $J^{\pm}(u,x)$
in the sense of
non-local version of closure of semijets of \cite{CIL92}, and provide the sufficient condition for a function $\phi$ to be
 in the closure $\bar J^{\pm}(u,x)$.

\begin{definition}
 \label{d:02v5}
 A set $\bar J^{+} (u, x)$ {\rm (respectively $\bar J^{-} (u, x)$)} is given by all functions $\phi \in C_{x}$ satisfying the following conditions: There exists $x_{\epsilon} \to x$ and $\phi_{\epsilon} \in 
 J^{+} (u, x_{\epsilon})$ {\rm (respectively 
 $\phi_{\epsilon} \in J^{-} (u, x_{\epsilon})$)} satisfying
 $$(x_{\epsilon}, \phi_{\epsilon}(x_{\epsilon}), D \phi_{\epsilon}(x_{\epsilon}), D^{2} \phi_{\epsilon} (x_{\epsilon}), \mathcal I (\phi_{\epsilon}, x_{\epsilon})) \to 
 (x, \phi(x), D \phi(x), D^{2} \phi (x), \mathcal I (\phi, x)).$$
\end{definition}

For notational simplicity, we define a shifted L\'{e}vy measure 
$\nu_{x}$ by 
$\nu_{x}(dy) = \hat \nu(y-x) dy$ for any $x\in \mathbb R^{d}$.
Accordingly, we say $\phi \in L^{1}(\nu_{x}, B)$ for some Lebesgue 
measurable set
$B$ of $\mathbb R^{d}$, if $\int_{B} |\phi(y)| \nu_{x} (dy) <\infty$ is well defined.

\begin{lemma}
 \label{l:02v5}
 For a given $x\in \mathbb R^{d}$ and $\phi \in C_{x}$, 
 if there exists $\{(\phi_{\epsilon}, x_{\epsilon}): \epsilon >0\}$ and $r>0$ such that
\begin{enumerate}
 \item $\lim_{\epsilon} x_{\epsilon} = x$;
 \item $\phi_{\epsilon} \in C^{\infty}(B_{2r}(x))$ such that
 $\|\phi_{\epsilon} - \phi\|_{W^{2, \infty}(B_{r}(x))} \to 0$ as $\epsilon \to 0$;
 
 \item  $\exists \hat \phi  \in L^{1}(\nu_{x}, B_{r}^{c}(x))$ such that 
 $|\phi_{\epsilon}| \le \hat \phi$  and $\lim_{\epsilon \to 0} \|\phi_{\epsilon} - \phi\|_{L^{1}(\nu_{x}, B_{r}^{c}(x))} = 0$;
\end{enumerate}
Then, we have, 
$$\mathcal I_{ r, 1} (\phi_{\epsilon}, x_{\epsilon}) \to 
 \mathcal I_{ r, 1} (\phi, x), \hbox{ and }
 \mathcal I_{ r, 2} (\phi_{\epsilon}, x_{\epsilon}) \to 
 \mathcal I_{r, 2} (\phi, x), \ \hbox{ as }\epsilon \to 0^{+}.$$
\end{lemma}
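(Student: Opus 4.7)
The plan is to prove each of the two convergences by dominated convergence, using different hypotheses for each: hypothesis (2) controls the integrand of $\mathcal{I}_{r,1}$ (where the singularity of $\nu$ at the origin lives), while hypothesis (3) together with finiteness of $\nu$ on $B_r^c$ controls $\mathcal{I}_{r,2}$.

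For $\mathcal{I}_{r,1}(\phi_\epsilon, x_\epsilon) \to \mathcal{I}_{r,1}(\phi, x)$, I would apply the second-order Taylor formula with integral remainder to write
\[
\phi_\epsilon(x_\epsilon+y) - \phi_\epsilon(x_\epsilon) - D\phi_\epsilon(x_\epsilon)\cdot y \;=\; \int_0^1 (1-t)\, D^2\phi_\epsilon(x_\epsilon+ty)[y,y]\,dt,
\]
which is valid because $\phi_\epsilon\in C^\infty(B_{2r}(x))$ and, for small $\epsilon$, the segment $[x_\epsilon, x_\epsilon+y]$ stays inside $B_{2r}(x)$ whenever $y \in B_r$. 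The convergence in $W^{2,\infty}(B_r(x))$ supplies a uniform bound $\|D^2\phi_\epsilon\|_{L^\infty(B_r(x))} \le M$, and for $y$ satisfying $|y|+|x_\epsilon-x|\le r$ this gives that the integrand is at most $\tfrac{M}{2}|y|^2$, a valid $\nu$-dominating function on $B_{r/2}$ by the L\'evy-measure property. The remaining annular slice $B_r\setminus B_{r/2}$ carries finite $\nu$-mass (since $\hat\nu$ is continuous, hence bounded, on compact subsets of $\mathbb{R}^d \setminus \{0\}$), so I can bound the integrand there crudely by $|\phi_\epsilon(x_\epsilon+y)|+|\phi_\epsilon(x_\epsilon)|+|D\phi_\epsilon(x_\epsilon)||y|$, which is uniformly controlled by $\hat\phi$ (from hypothesis (3)) together with the $W^{2,\infty}$ bound. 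Pointwise convergence of the integrand comes from $\phi_\epsilon\to\phi$ in $W^{2,\infty}(B_r(x))$ and the smoothness of $\phi$ in a neighborhood of $x$ (since $\phi\in C_x$), so dominated convergence delivers the first claim.

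For $\mathcal{I}_{r,2}(\phi_\epsilon, x_\epsilon)\to\mathcal{I}_{r,2}(\phi, x)$, I would first use that $\nu(B_r^c)<\infty$ (a consequence of the L\'evy-measure property) to separate
\[
\mathcal{I}_{r,2}(\phi_\epsilon,x_\epsilon)= \int_{|y|>r}\phi_\epsilon(x_\epsilon+y)\,\nu(dy) \;-\; \phi_\epsilon(x_\epsilon)\,\nu(B_r^c),
\]
where the second piece converges because $\phi_\epsilon(x_\epsilon)\to \phi(x)$. For the first piece I would change variables $z=x_\epsilon+y$ to rewrite it as $\int_{|z-x_\epsilon|>r}\phi_\epsilon(z)\,\nu_{x_\epsilon}(dz)$ and then compare with $\int_{|z-x|>r}\phi(z)\,\nu_x(dz)$ through the triangle split
\[
\int [\phi_\epsilon(z)-\phi(z)]\,I_{\{|z-x_\epsilon|>r\}}\,\nu_{x_\epsilon}(dz) \;+\; \int \phi(z)\bigl[I_{\{|z-x_\epsilon|>r\}}\hat\nu(z-x_\epsilon) - I_{\{|z-x|>r\}}\hat\nu(z-x)\bigr]dz.
\]
The first summand is controlled by hypothesis (3) after a change of variables, using $|\phi_\epsilon|\le \hat\phi$ to absorb any part that leaks onto the symmetric difference of the two domains. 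The second summand is handled by another dominated-convergence application: its integrand tends pointwise to zero off $\{|z-x|=r\}$ by continuity of $\hat\nu$ on $\mathbb{R}^d\setminus\{0\}$, and is dominated uniformly in $\epsilon$ by $|\phi(z)|[\hat\nu(z-x_\epsilon)+\hat\nu(z-x)]$, which is integrable for small $\epsilon$ under hypothesis (3).

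The chief technical obstacle is this second summand, where one must simultaneously manage a moving integration region and a moving L\'evy density. The resolution is that the symmetric difference $\{|z-x_\epsilon|>r\}\triangle\{|z-x|>r\}$ is a thin shell staying bounded away from the singularity of $\hat\nu$ for small $\epsilon$, so $\hat\nu(z-x_\epsilon)$ is uniformly bounded there; combining this with $|\phi|\le \hat\phi$ on $B_r^c(x)$ and the L\'evy integrability closes the argument.
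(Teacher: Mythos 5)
Your overall strategy --- dominated convergence near the origin using the $W^{2,\infty}$ control, and a change of variables plus a triangle split into a function-difference term and a moving-domain/moving-density term for the far field --- is the same as the paper's. But your arrangement of the far-field split contains a concrete misstep. You claim that the summand
\[
\int\bigl[\phi_\epsilon(z)-\phi(z)\bigr]\,I_{\{|z-x_\epsilon|>r\}}\,\hat\nu(z-x_\epsilon)\,dz
\]
is ``controlled by hypothesis (3) after a change of variables.'' Hypothesis (3) gives $\|\phi_\epsilon-\phi\|_{L^{1}(\nu_x,B_r^{c}(x))}\to 0$, i.e.\ smallness of $\int|\phi_\epsilon-\phi|(z)\,\hat\nu(z-x)\,dz$, whereas your integral carries the \emph{shifted} density $\hat\nu(z-x_\epsilon)$. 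Since $\hat\nu$ is only assumed continuous away from the origin (it may vanish on large sets), there is no uniform comparison $\hat\nu(z-x_\epsilon)\le C\,\hat\nu(z-x)$, so convergence in $L^{1}(\nu_x)$ does not transfer to $L^{1}(\nu_{x_\epsilon})$ for free; handling the domain leak with $\hat\phi$ does not address this. To repair it you must run on this summand the same moving-density argument you reserve for the second one (e.g.\ write $\hat\nu(z-x_\epsilon)\le\hat\nu(z-x)+|\hat\nu(z-x_\epsilon)-\hat\nu(z-x)|$ and dominate $|\phi_\epsilon-\phi|$ by $2\hat\phi$). The paper's decomposition sidesteps this entirely: it pairs $\phi_\epsilon-\phi$ with the \emph{fixed} density $\hat\nu(\cdot-x)$, so that term is literally hypothesis (3), and it loads the whole domain-and-density shift onto the single function $\phi_\epsilon$, dominated by $\hat\phi$; that shift term is then split into the common region (where $|z-x_\epsilon|\wedge|z-x|\ge r$ keeps $\hat\nu$ bounded) and two thin shells of vanishing $\nu$-measure controlled by the $L^\infty$ bound near $B_r(x)$. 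Note also that your proposed dominating function $|\phi(z)|[\hat\nu(z-x_\epsilon)+\hat\nu(z-x)]$ in the second summand is $\epsilon$-dependent, so it is not literally a dominating function in the sense of the DCT; a fixed majorant must be extracted first.

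A secondary remark on $\mathcal I_{r,1}$: your Taylor-with-integral-remainder plus a $B_{r/2}$-versus-annulus split can be made to work, but the annulus domination by ``$\hat\phi$ together with the $W^{2,\infty}$ bound'' is awkward because $\hat\phi(x_\epsilon+y)$ is not a fixed function of $y$. The paper's route is shorter and avoids dominated convergence altogether: set $f_\epsilon(y)=\phi_\epsilon(x_\epsilon+y)-\phi(x+y)$, observe that the difference of the two integrals equals $\int_{B_r}(f_\epsilon(y)-f_\epsilon(0)-Df_\epsilon(0)\cdot y)\,\nu(dy)$, bound this by $\tfrac12\|D^{2}f_\epsilon\|_{L^{\infty}(\bar B_r)}\int_{B_r}|y|^{2}\,\nu(dy)$, and send $\|D^{2}f_\epsilon\|_{L^{\infty}(\bar B_r)}\to 0$ using the $W^{2,\infty}$ convergence together with the uniform continuity of $D^{2}\phi$.
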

\begin{proof}
Without loss of generality, we assume $r$ is small enough such that $\phi \in C^{\infty}(B_{2r}(x))$. 
For an arbitrary $\epsilon$ satisfying $|x_{\epsilon} - x| < r/3$, using $f_{\epsilon}$ defined by
 $$f_{\epsilon} (y) = \phi_{\epsilon}(x_{\epsilon} + y) - \phi (x+ y),$$
 we can write the following inequalities:
 $$
\begin{array}{ll}
 \Big | \mathcal I_{r, 1} (\phi_{\epsilon}, x_{\epsilon}) - 
 \mathcal I_{r, 1} (\phi, x) \Big |
& 
 = 
\Big | \int_{B_{r}} (f_{\epsilon} (y) - f_{\epsilon} (0) - D f_{\epsilon}(0) \cdot y) \nu(dy) \Big |
\\ &
 \le \frac 1 2 \|D^{2} f_{\epsilon}\|_{L^{\infty}(\bar B_{r})} 
 \int_{B_{r}} |y|^{2} \nu (dy).
\end{array}
 $$
 Note that $x_{\epsilon} + y \in \bar B_{r}(x)$ whenever $y\in B_{ r}$. 
\begin{itemize}
\item   Since $D^{2}\phi_{\epsilon} \to D^{2} \phi$ 
holds uniformly in $B_{r}(x)$, we have 
$$ \sup_{y \in B_{r}} | D^{2} \phi_{\epsilon}(x_{\epsilon} + y) - D^{2} \phi(x_{\epsilon} + y)| \to 0^{+};$$ 
\item  $\phi \in C^{\infty}(B_{2r})$ implies that $D^{2} \phi$ is uniformly 
continuous in $B_{r}$ and 
$$ \sup_{y \in B_{r}} | D^{2} \phi(x_{\epsilon} + y) - D^{2} \phi(x_{\epsilon} + y)| \to 0^{+};$$ 
\end{itemize}
 we conclude that
 $ \frac 1 2 \|D^{2} f_{\epsilon}\|_{L^{\infty}(\bar B_{r})} \to 0$ and
 $\mathcal I_{r, 1} (\phi_{\epsilon}, x_{\epsilon}) \to 
 \mathcal I_{r, 1} (\phi, x)$ as $\epsilon \to 0^{+}$.
 
 Next, we write 
 $$|\mathcal I_{ r, 2} (\phi_{\epsilon}, x_{\epsilon}) - 
 \mathcal I_{ r, 2} (\phi, x)|  \le  TERM1 + TERM2 + TERM3,$$
 where three terms are followed by
\begin{enumerate}
 \item Due to the property of L\'{e}vy measure, it yields $\nu(B_{r}^{c}) <\infty$, and uniform convergence of $\phi_{\epsilon}$ on $B_{2r}(x)$ leads to
 $$
\begin{array}
 {ll}
  TERM1 &= \Big | \int_{B_{r}^{c}}(\phi_{\epsilon}(x_{\epsilon}) - \phi(x) ) \nu(dy) \Big | 
 \\&
 = | \phi_{\epsilon}(x_{\epsilon}) - \phi(x) | 
 \nu(B_{r}^{c}) \to 0,  \hbox{ as } \epsilon \to 0^{+};
\end{array}
$$
 \item  Since $\hat \nu \in C_{b}(B_{r}^{c})$, we have
 $$TERM2 = \Big |\int_{B_{ r}^{c}}(\phi_{\epsilon} - \phi) (x + y ) \nu(dy) \Big | \le 
 \|\phi_{\epsilon} - \phi\|_{L^{1}(\nu_{x}, B_{r}^{c}(x))} \to 0, \hbox{ as } \epsilon \to 0^{+};$$
\item One can write 
$$ 
\begin{array}
 {ll}
 TERM3  
 & \displaystyle
 = \Big |\int_{B_{ r}^{c}}(\phi_{\epsilon} (x_{\epsilon} + y) - 
 \phi_{\epsilon} (x + y )) \nu(dy) \Big |
 \\   & \displaystyle
 = \Big |\int_{B_{ r}^{c}(x_{\epsilon})} \phi_{\epsilon} (z) 
 \hat \nu (z - x_{\epsilon}) dz - 
 \int_{B_{ r}^{c}(x)} \phi_{\epsilon} (z) \hat \nu(z -x) d z \Big | 
 \\ & \displaystyle
\le TERM 31 + TERM32 + TERM33
\end{array}$$
where $TERM3$ is again divided by three terms as such:
\begin{itemize}
 \item Since $|\phi_{\epsilon}| \le \hat \phi \in L^{1}(\nu_{x}, B_{r}^{c}(x))$, 
 $\hat \nu \in C_{b} (B_{r}^{c})$ and $|z - x_{\epsilon}| \wedge |z-x| \ge r$, one can use Dominated Convergence Theorem to conclude that
 $$TERM31 = \int_{B_{ r}^{c}(x_{\epsilon}) \cap B_{ r}^{c}(x)} 
 |\phi_{\epsilon} (z) (\hat \nu(z - x_{\epsilon}) - \hat \nu(z -x))| dz \to 0$$
 as $\epsilon \to 0$;
 \item Note that $x_{\epsilon} + y \in B_{r}(x)$ whenever $y \in B_{ r}^{c} \cap B_{ r}(x-x_{\epsilon})$. Together with
 $\|\phi_{\epsilon}\|_{L^{\infty}(B_{r}(x))} \to \|\phi\|_{L^{\infty}(B_{r}(x))}$
 as $\epsilon \to 0$ due to the uniform convergence on $B_{2r}(x)$,  it
 yields
 $$ 
\begin{array}
 {ll}
 TERM32  
 & \displaystyle
 = \int_{B_{ r}^{c}(x_{\epsilon}) \cap B_{ r}(x)} 
 |\phi_{\epsilon} (z)|  \hat \nu (z - x_{\epsilon}) dz 
 \\ & \displaystyle
= \int_{B_{ r}^{c} \cap B_{ r}(x-x_{\epsilon})} 
 |\phi_{\epsilon} (x_{\epsilon} + y)|  \hat \nu (y) d y
  \\ & \displaystyle
  \le \| \phi_{\epsilon}\|_{L^{\infty}(B_{r}(x))} \nu(B_{ r}^{c} \cap B_{ r}(x-x_{\epsilon})) \to 0,  \hbox{ as } \epsilon \to 0^{+};
\end{array}$$
\item Similarly, we have 
$x + y \in B_{r}(x_{\epsilon}) \subset B_{4r/3}(x)$ whenever $y \in B_{ r}^{c} \cap B_{ r}(x_{\epsilon} -x)$. Thus, we have
$$\| \phi_{\epsilon}\|_{L^{\infty}(B_{r}(x_{\epsilon}))}  \le 
\| \phi_{\epsilon}\|_{L^{\infty}(B_{2r}(x))}  \to 
\| \phi \|_{L^{\infty}(B_{2r}(x))}   \ \hbox{ as } \epsilon \to 0$$
due to the uniform convergence on $B_{2r}(x)$,
and it yields
 $$ 
\begin{array}
 {ll}
 TERM33  
 & \displaystyle
 = \int_{B_{ r}(x_{\epsilon}) \cap B^{c}_{ r}(x)} 
 |\phi_{\epsilon} (z)|  \hat \nu (z - x) dz 
 \\ & \displaystyle
= \int_{B_{ r} (x_{\epsilon} -x) \cap B_{ r}^{c}} 
 |\phi_{\epsilon} (x + y)|  \hat \nu (y) d y
  \\ & \displaystyle
  \le \| \phi_{\epsilon}\|_{L^{\infty}(B_{r}(x_{\epsilon}))} 
  \nu(B_{r} (x_{\epsilon} -x) \cap B_{ r}^{c})
\\ & \displaystyle
  \le \| \phi_{\epsilon}\|_{L^{\infty}(B_{2r}(x))} 
  \nu(B_{r} (x_{\epsilon} -x) \cap B_{ r}^{c}) \to 0  \hbox{ as } \epsilon \to 0^{+};
\end{array}$$
\end{itemize}
Therefore, $TERM3$ is also converging to zero as $\epsilon$ goes to zero.
\end{enumerate} 
This completes the proof of 
$|\mathcal I_{ r, 2} (\phi_{\epsilon}, x_{\epsilon}) - 
 \mathcal I_{ r, 2} (\phi, x)| \to 0$.
 
\end{proof}

Now we can simplify the statement of Lemma \ref{l:02v5} for the convenience of the later use.
\begin{lemma}
 \label{p:02v6}
  For a given $x\in \mathbb R^{d}$ and $\phi \in C_{x}$, 
 if there exists $\{(\phi_{\epsilon}, x_{\epsilon}): \epsilon >0\}$ and $r>0$ such that
\begin{enumerate}
 \item $\lim_{\epsilon} x_{\epsilon} = x$;
 \item $\phi_{\epsilon} \in C^{\infty}(B_{r}(x))$ such that
 $\|\phi_{\epsilon} - \phi\|_{W^{2, \infty}}(B_{r}(x)) \to 0$ as $\epsilon \to 0$;
 
 \item  $\exists \hat \phi  \in L^{1}(\nu_{x}, B_{r}^{c}(x))$ such that 
 $|\phi_{\epsilon}| \le \hat \phi$  and $\lim_{\epsilon \to 0} \|\phi_{\epsilon} - \phi\|_{L^{1}(\nu_{x}, B_{r}^{c}(x))} = 0$;
\end{enumerate}
Then, we have, for any $\hat r>0$
\begin{equation}
 \label{eq:11}
 \mathcal I_{\hat  r, 1} (\phi_{\epsilon}, x_{\epsilon}) \to 
 \mathcal I_{\hat  r, 1} (\phi, x), \hbox{ and }
 \mathcal I_{\hat  r, 2} (\phi_{\epsilon}, x_{\epsilon}) \to 
 \mathcal I_{\hat r, 2} (\phi, x), \ \hbox{ as }\epsilon \to 0^{+}.
\end{equation}
\end{lemma}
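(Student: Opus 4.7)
The plan is to deduce Lemma \ref{p:02v6} from Lemma \ref{l:02v5} by first establishing the convergences at a conveniently small auxiliary radius and then transporting them to an arbitrary $\hat r>0$ via annulus integration.

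\textbf{Reduction to a small radius.} I choose $r' \in (0, r/2)$ small enough that $\phi$ itself is $C^\infty$ on $B_{2r'}(x)$; this is permissible because $\phi \in C_x$. The three hypotheses of Lemma \ref{l:02v5} at radius $r'$ are then verified as follows: $B_{2r'}(x) \subset B_r(x)$ gives $\phi_\epsilon \in C^\infty(B_{2r'}(x))$; the $W^{2,\infty}(B_{r'}(x))$ convergence is inherited from the convergence on the larger ball $B_r(x)$; and on $B_{r'}^c(x) = B_r^c(x) \cup (B_r(x)\setminus B_{r'}(x))$, I enlarge $\hat\phi$ by a constant majorizing $\sup_\epsilon \|\phi_\epsilon\|_{L^\infty(B_r(x))}$ on the annulus, which is legitimate because $\nu_x$ has finite mass there. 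The $L^1$ convergence on $B_{r'}^c(x)$ decomposes analogously, with the annular part bounded by $\|\phi_\epsilon - \phi\|_{L^\infty(B_r(x))}\,\nu_x(B_r(x)\setminus B_{r'}(x)) \to 0$. Lemma \ref{l:02v5} then yields $\mathcal{I}_{r',i}(\phi_\epsilon, x_\epsilon) \to \mathcal{I}_{r',i}(\phi, x)$ for $i=1,2$.

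\textbf{Transport to arbitrary radius.} For a fixed $\hat r > 0$, each difference $\mathcal{I}_{\hat r, i}(\phi_\epsilon, x_\epsilon) - \mathcal{I}_{r', i}(\phi_\epsilon, x_\epsilon)$ is an integral over the annulus $A$ between $\min(r',\hat r)$ and $\max(r',\hat r)$; explicitly,
\[
\mathcal{I}_{\hat r, 1}(\phi_\epsilon, x_\epsilon) - \mathcal{I}_{r', 1}(\phi_\epsilon, x_\epsilon) = \pm\int_A \bigl[\phi_\epsilon(x_\epsilon + y) - \phi_\epsilon(x_\epsilon) - D\phi_\epsilon(x_\epsilon)\cdot y\bigr]\,\nu(dy),
\]
and the formula for $i=2$ is the analogous expression with integrand $\phi_\epsilon(x_\epsilon+y) - \phi_\epsilon(x_\epsilon)$ and opposite sign. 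Since $A$ is bounded away from the origin, $\nu$ has finite mass on any bounded portion of $A$. I split $A$ into (a) the subset where $|y|$ is small enough that $x_\epsilon + y \in B_r(x)$ for all sufficiently small $\epsilon$, where convergence follows from uniform convergence of $\phi_\epsilon, D\phi_\epsilon$ combined with bounded convergence; and (b) the subset (nonempty only when $\hat r > r$) where $x_\epsilon + y$ may leave $B_r(x)$, where dominated convergence is applied using $|\phi_\epsilon| \le \hat\phi \in L^1(\nu_x, B_r^c(x))$.

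\textbf{Main obstacle.} The principal technical point is part (b): for $|y|$ large relative to $r$, the integrand involves $\phi_\epsilon$ evaluated outside its smoothness region, so only $L^1$-information is available; moreover, the shift $x_\epsilon + y$ means the natural reference measure for the integration variable is $\nu_{x_\epsilon}$ rather than $\nu_x$, and the discrepancy $|\hat\nu(z - x_\epsilon) - \hat\nu(z - x)|$ must be controlled. This is precisely the bookkeeping already carried out in the TERM31--TERM33 estimates in the proof of Lemma \ref{l:02v5}, and the same argument (relying on $\hat\nu \in C_0(\mathbb{R}^d\setminus\{0\})$ and dominated convergence) transfers directly.
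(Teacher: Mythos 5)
Your proof is correct, and its first half coincides with the paper's: both reduce to Lemma \ref{l:02v5} at a smaller radius at which $\phi$ is smooth and the domination hypothesis can be patched on the intervening annulus (the paper takes $r/2$ and replaces $\hat\phi$ by $\hat\phi I_{B_r^c(x)}+(|\phi|+1)I_{\bar B_r(x)}$, which is essentially your construction). The two arguments part ways at the passage to arbitrary $\hat r$. You compute the annular correction $\mathcal I_{\hat r,i}-\mathcal I_{r',i}$ directly, splitting according to whether $x_\epsilon+y$ remains in the smoothness ball and re-running the $TERM3$-type bookkeeping on the outer piece. The paper avoids any new integral estimate: combining the convergence at radius $r'$ with \eqref{eq:03v5} and $D\phi_\epsilon(x_\epsilon)\to D\phi(x)$, it first obtains convergence of the full operator $\mathcal I(\phi_\epsilon,x_\epsilon)\to\mathcal I(\phi,x)$; it then observes that this conclusion holds for every admissible L\'evy measure, applies it to the truncated measure $I_{B_{\hat r}}(y)\,\nu(dy)$, and recovers the convergence of $\mathcal I_{\hat r,2}$, hence of $\mathcal I_{\hat r,1}$, by subtraction through \eqref{eq:03v5}. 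Your route is more explicit and self-contained; the paper's is shorter but leans on the (unstated) uniformity of Lemma \ref{l:02v5} in the choice of $\nu$ and glosses over the fact that the truncated density is discontinuous on the sphere $|y|=\hat r$. The dominated-convergence point you flag as the main obstacle is present in both versions and is resolved exactly as in the $TERM31$--$TERM33$ estimates of Lemma \ref{l:02v5}, so nothing is lost by your choice of route.
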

\begin{proof}
 Let $\hat r = r/2$, then $(\phi_{\epsilon}, x_{\epsilon})$ satisfies all conditions of Lemma \ref{l:02v5} by switching $r$ by $\hat r$ and 
 $\hat \phi$ by 
 $\hat \phi I_{B_{r}^{c}(x)} + (|\phi| + 1) I_{\overline B_{r}(x)}$. Therefore, the conclusion \eqref{eq:11} holds for $\hat r = r/2$. Together with \eqref{eq:03v5}, we have 
 $$\mathcal I(\phi_{\epsilon}, x_{\epsilon}) :=
 \mathcal I(\phi_{\epsilon}, x_{\epsilon}; \nu) \to  \mathcal I(\phi, x) := 
  \mathcal I(\phi, x; \nu).$$
This convergence is valid for all $\nu$, and we apply this convergence to 
$I_{B_{\hat r}}(y) \nu(dy)$, which yields 
 $$\forall \hat r>0, \ \mathcal I_{\hat  r, 2} (\phi_{\epsilon}, x_{\epsilon}) \to 
 \mathcal I_{\hat r, 2} (\phi, x), \ \hbox{ as }\epsilon \to 0^{+}.$$
 This in turn implies, due to \eqref{eq:03v5}
 $$\forall \hat r>0, \ \mathcal I_{\hat  r, 1} (\phi_{\epsilon}, x_{\epsilon}) \to 
 \mathcal I_{\hat r, 1} (\phi, x), \ \hbox{ as }\epsilon \to 0^{+}.$$
\end{proof}

We will give a sufficient condition for $\phi \in \bar J^{\pm} u(x)$ in this below. 
\begin{proposition}
 \label{p:01v5}
\begin{enumerate}
 \item For a given $x\in \mathbb R^{d}, \phi\in C_{x}$ and $u\in USC(\mathbb R^{d})$, 
 if there exists 
 $$\{(\phi_{\epsilon}, x_{\epsilon}): \phi_{\epsilon} \in J^{+} (u, x_{\epsilon}),  \ \epsilon >0\}$$ 
 satisfying all conditions in Lemma \ref{p:02v6},
then we have $\phi\in \bar J^{+} (u, x)$. 

\item For a given $x\in \mathbb R^{d}, \phi\in C_{x}$ and $u\in LSC(\mathbb R^{d})$, 
 if there exists 
 $$\{(\phi_{\epsilon}, x_{\epsilon}): \phi_{\epsilon} \in J^{-} (u, x_{\epsilon}),  \ \epsilon >0\}$$ 
 satisfying all conditions in Proposition \ref{p:02v6},
then we have $\phi\in \bar J^{-} (u, x)$. 
\end{enumerate}

\end{proposition}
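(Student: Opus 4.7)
The statement is essentially a repackaging of Lemma~\ref{p:02v6} in the language of Definition~\ref{d:02v5}. I will prove only part (1), since part (2) is symmetric (just replace USC by LSC and the inequality $\phi_\epsilon \ge u^g$ in the definition of $J^+$ by $\phi_\epsilon \le u_g$ in the definition of $J^-$). Unpacking Definition~\ref{d:02v5}, I must exhibit the given families $\{x_\epsilon\}$ and $\{\phi_\epsilon\}$, which are supplied by hypothesis with $\phi_\epsilon \in J^+(u,x_\epsilon)$ and $x_\epsilon \to x$, and verify the convergence of the $5$-tuple
\[
(x_\epsilon, \phi_\epsilon(x_\epsilon), D\phi_\epsilon(x_\epsilon), D^2\phi_\epsilon(x_\epsilon), \mathcal I(\phi_\epsilon, x_\epsilon)) \to (x, \phi(x), D\phi(x), D^2\phi(x), \mathcal I(\phi, x)).
\]

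The first coordinate is given. For the next three, I use that $\phi \in C_x$ implies $\phi \in C^\infty$ on some neighborhood of $x$, so each of $\phi$, $D\phi$, $D^2\phi$ is continuous at $x$. Combined with $\|\phi_\epsilon - \phi\|_{W^{2,\infty}(B_r(x))} \to 0$ and $x_\epsilon \to x$ (so $x_\epsilon \in B_r(x)$ eventually), the triangle inequality
\[
|D^k\phi_\epsilon(x_\epsilon) - D^k\phi(x)| \le \|D^k\phi_\epsilon - D^k\phi\|_{L^\infty(B_r(x))} + |D^k\phi(x_\epsilon) - D^k\phi(x)|
\]
for $k = 0, 1, 2$ yields the required pointwise convergence.

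For the fifth coordinate, I invoke Lemma~\ref{p:02v6}, whose hypotheses are precisely those assumed here. This gives
\[
\mathcal I_{r,1}(\phi_\epsilon, x_\epsilon) \to \mathcal I_{r,1}(\phi, x), \qquad \mathcal I_{r,2}(\phi_\epsilon, x_\epsilon) \to \mathcal I_{r,2}(\phi, x).
\]
Combining with the derivative convergence $D\phi_\epsilon(x_\epsilon) \to D\phi(x)$ and the decomposition identity~\eqref{eq:03v5},
\[
\mathcal I(\phi_\epsilon, x_\epsilon) = b_r \cdot D\phi_\epsilon(x_\epsilon) + \mathcal I_{r,1}(\phi_\epsilon, x_\epsilon) + \mathcal I_{r,2}(\phi_\epsilon, x_\epsilon) \to b_r \cdot D\phi(x) + \mathcal I_{r,1}(\phi, x) + \mathcal I_{r,2}(\phi, x) = \mathcal I(\phi, x),
\]
which closes the argument. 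Since all heavy lifting for the nonlocal term was already done in Lemma~\ref{p:02v6}, there is no real obstacle here; the only care needed is in checking that $\phi$ itself is smooth at $x$ (ensured by $\phi \in C_x$) so that the limit values $D^k\phi(x)$ are unambiguous, and in verifying that the uniform convergence on $B_r(x)$ plus continuity of $\phi$ at $x$ combine to give convergence at the moving points $x_\epsilon$.
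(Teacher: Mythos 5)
Your proof is correct and follows essentially the same route as the paper: the first four coordinates of the $5$-tuple converge by the $W^{2,\infty}(B_r(x))$ convergence together with smoothness of $\phi$ near $x$, and the nonlocal coordinate converges by Lemma~\ref{p:02v6} combined with the decomposition \eqref{eq:03v5}. The only difference is that the paper's proof additionally records the facts $\phi \ge u$ and $\phi(x)=u(x)$ (the latter via $\phi_{\epsilon}(x_{\epsilon})=u(x_{\epsilon})$ and upper semicontinuity of $u$); these are not demanded by the literal membership criterion of Definition~\ref{d:02v5}, so omitting them is not a gap here, though they are what makes $\bar J^{+}(u,x)$ usable downstream in the proof of Proposition~\ref{p:01v6}.
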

\begin{proof}
 $L^{1}$-convergence implies, with a subsequence,  
 $\phi_{\epsilon}\to \phi$ pointwisely, and so $\phi \ge u$. Uniform convergence in $B_{r}(x)$ also implies that 
 $$(x_{\epsilon}, \phi_{\epsilon}(x_{\epsilon}), D \phi_{\epsilon}(x_{\epsilon}), D^{2} \phi_{\epsilon} (x_{\epsilon})) \to 
 (x, \phi(x), D \phi(x), D^{2} \phi (x)).$$
 
 Moreover, $\phi(x) = u(x)$ holds 
 by the facts of $\phi_{\epsilon} \in J^{+} (u, x_{\epsilon})$ and
 upper semicontinuity of $u$, i.e.
 $$\phi(x) = \lim_{\epsilon} \phi_{\epsilon} (x_{\epsilon}) = 
 \lim\sup_{\epsilon} \phi_{\epsilon} (x_{\epsilon}) = 
 \lim\sup_{\epsilon} u (x_{\epsilon}) = u(x).$$
 
 In view of the relation of \eqref{eq:03v5} and Proposition \ref{p:02v6}, 
 we also have 
 $\mathcal I(\phi_{\epsilon}, x_{\epsilon}) \to 
 \mathcal I (\phi, x)$ and $\phi\in \bar J^{+} (u, x)$. 
  Similarly, we can show 
$\phi\in \bar J^{-} (u, x)$.
\end{proof}

Finally, we present the continuity of $\mathcal I (\phi, \cdot)$, which will be
later used several times.
\begin{lemma}
 \label{p:02v5}
 For a given $x\in \mathbb R^{d}$ and $\phi \in C_{x}$, the mapping
 $\mathcal I(\phi,\cdot)$ is continuous at $x$.
 \end{lemma}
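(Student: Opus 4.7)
My plan is to reduce the continuity of $\mathcal I(\phi,\cdot)$ at $x$ to an immediate invocation of Lemma \ref{p:02v6}, applied to the constant approximating sequence $\phi_\epsilon \equiv \phi$ with varying evaluation points $x_\epsilon \to x$. First, I would unpack the hypothesis $\phi \in C_x$: by \eqref{eq:phiv4} there exist $\hat r > 0$, $\phi_1 \in C^\infty(\mathbb R^d)$, and $\phi_2 \in L^1$ with $\phi = \phi_1 I_{\bar B_{\hat r}(x)} + \phi_2 (1 - I_{\bar B_{\hat r}(x)})$. For any sequence $x_n \to x$, I would note that for $n$ large enough $B_{\hat r/2}(x_n) \subset \bar B_{\hat r}(x)$, so $\phi$ is smooth on a neighborhood of $x_n$; hence $\phi \in C_{x_n}$ and $\mathcal I(\phi, x_n)$ is well-defined via the decomposition \eqref{eq:03v5}.

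Next, I would fix a single radius $r \in (0, \hat r/2)$, which ensures $B_r(x_n) \subset \bar B_{\hat r}(x)$ uniformly for large $n$, and apply Lemma \ref{p:02v6} with $\phi_\epsilon \equiv \phi$ and $x_\epsilon = x_n$. The three hypotheses become immediate: (i) $x_n \to x$; (ii) $\phi$ is smooth on $B_r(x)$ and $\|\phi_\epsilon - \phi\|_{W^{2,\infty}(B_r(x))} \equiv 0$; (iii) with dominating function $\hat\phi := |\phi|$ one has $|\phi_\epsilon| = |\phi| \le \hat\phi$ and $\|\phi_\epsilon - \phi\|_{L^1(\nu_x, B_r^c(x))} \equiv 0$. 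The only claim that needs a word of justification is that $|\phi|$ lies in $L^1(\nu_x, B_r^c(x))$; I would verify this by splitting $B_r^c(x)$ as $(\bar B_{\hat r}(x) \setminus B_r(x)) \cup \bar B_{\hat r}^c(x)$. On the annulus $\phi = \phi_1$ is bounded and integrated against the finite measure $\nu$ restricted to a bounded set away from the origin; on $\bar B_{\hat r}^c(x)$ the kernel $\hat\nu(\cdot-x)$ is bounded since $\hat\nu \in C_0(\mathbb R^d \setminus \{0\})$, and it multiplies $\phi_2 \in L^1$. Lemma \ref{p:02v6} then yields $\mathcal I_{r,1}(\phi, x_n) \to \mathcal I_{r,1}(\phi, x)$ and $\mathcal I_{r,2}(\phi, x_n) \to \mathcal I_{r,2}(\phi, x)$.

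Finally, because $\phi$ is smooth on $B_r(x)$ its gradient $D\phi$ is continuous at $x$, so $b_r \cdot D\phi(x_n) \to b_r \cdot D\phi(x)$. Summing the three contributions in the identity \eqref{eq:03v5}, applied at both $x$ and $x_n$ with the common radius $r$, gives $\mathcal I(\phi, x_n) \to \mathcal I(\phi, x)$. The only genuine obstacle here is the radius bookkeeping: one must choose $r$ so that the decomposition \eqref{eq:03v5} applies simultaneously to $\mathcal I(\phi, x)$ and to $\mathcal I(\phi, x_n)$ for all large $n$, and one must verify the mild integrability of $|\phi|$ against $\nu_x$ away from $x$. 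Once these are arranged, the conclusion is a direct consequence of Lemma \ref{p:02v6}.
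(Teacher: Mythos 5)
Your proposal is correct and follows exactly the paper's route: the paper's proof is the one-liner ``take $\phi_{\epsilon}=\phi$, apply Lemma~\ref{p:02v6} and the relation \eqref{eq:03v5},'' and you have simply filled in the routine verifications (smoothness of $\phi$ near $x$, integrability of $|\phi|$ against $\nu_{x}$ off a ball, and the radius bookkeeping) that the paper leaves implicit.
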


\begin{proof}

 If $x_{\epsilon} \to x$, then we can take $\phi_{\epsilon} = \phi$ and 
 apply
 Proposition \ref{p:02v6} and the relation of \eqref{eq:03v5}  to conclude the result.
\end{proof}

\subsection{Proof of equivalence between two definitions}
This section is devoted to the proof of Proposition~\ref{p:01v6}.
\begin{proof}
 If $u$ is a subsolution of Definition \ref{d:11}, then it automatically satisfies
 subsolution properties of  Definition \ref{d:01v5}. 
 In the reverse direction, in view of Assumption
 \ref{a:01v5} (\ref{a:011v5}), 
 we shall show that, arbitrary $\phi \in J^{+} (u, x)$ and $r>0$ implies that
 $$w := \phi I_{\bar B_{r}(x)} + 
 u^{g} I_{\bar B_{r}^{c}(x)} \in \bar J^{+} (u, x),$$
 where we recall that $u^{g}$ is defined in Definition~\ref{d:01v5}.
 In the rest of the proof, 
 we fix $x\in O$ and $r = \frac 1 2 dist (x, \partial O)$.
 According to Proposition \ref{p:01v5}, we shall construct 
 $\{\phi_{\epsilon} \in J^{+}(u,x_{\epsilon}): \epsilon>0\}$ 
 satisfying all conditions of 
 Proposition \ref{p:02v6}. 
 We establish this in the following steps with restriction on 
 $\epsilon \in (0, 1 \wedge \frac {r^{4}}{4})$.
\begin{enumerate}
 \item Set $\hat \phi (y) = \phi(y)  + \sqrt \epsilon |y - x|^{2}$.
 Note that 
\begin{equation}
 \label{eq:est2}
 \|\hat \phi - w \|_{W^{2, \infty}(B_{r}(x))} \le \sqrt \epsilon (r^{2} + 2rd + 2d).
\end{equation}
 \item  Let 
 $$w_{1} (y) = \hat \phi (y) I_{\bar B_{r}(x)} (y) + (\epsilon + u^{g}(y)) 
 I_{\bar B_{r}^{c}(x)}(y),$$ 
 then $w_{1} \in USC$ due to $\hat \phi> u^{g}$ on $\partial B_{r}(x)$. Also, we have 
\begin{equation}
 \label{eq:est03}
 w_{1} = \hat \phi \ \hbox{ on } B_{r}; \quad 
 \|w_{1} - w\|_{L^{1}(\nu_{x}, B_{r}^{c}(x))} \le \epsilon \ \nu(B_{r}^{c}).
\end{equation}
 \item Next, $w_{2}$ is chosen from the continuous functions dominating 
 $w_{1}$ from its above, and 
 sufficiently close to $w_{1}$ in the following sense. Let $\mathcal C_{2}$ be 
$$\mathcal C_{2} = \{\bar w: \bar w - \epsilon \in C_{0} (\mathbb R^{d}); \ 
\bar w \ge w_{1} \hbox{ on } \mathbb R^{d}; \ 
\bar w = w_{1} \hbox{ on } \bar B_{r} (x)\}.$$
Since $w_{1}\in USC(\mathbb R^{d})$, $w_{1} (y) = g(y) + \epsilon$ 
for $y \notin O$, and $g\in C_{0}$, the set $\mathcal C_{2}$ is not empty.
If we let $\bar w$ run over all  such functions, 
then 
$\inf_{\bar w \in \mathcal C_{2}} (\bar w - w) (x) = 0$ 
for all $x\in B_{r}^{c}(x)$.
Then, we can apply  the monotone convergence theorem to have
 $$\inf_{\bar w \in \mathcal C_{2}} \| \bar w - w_{1}\|_{L^{1}(\nu_{x}, B_{r}^{c}(x))} = 0.$$
Therefore, we can take  $w_{2} \in \mathcal C_{2}$  
\begin{equation}
 \label{eq:est04}
 w_{2} = w_{1} \hbox{ on } B_{r}(x), \quad 
  \|w_{2} - w_{1}\|_{L^{1}(\nu_{x}, B_{r}^{c}(x))} \le \epsilon.
\end{equation}

 \item $w_{3} = \eta_{\epsilon'} * w_{2}$ is the convolution
  with a mollifier (see Appendix C.4 of \cite{Eva98}) 
  of radius $\epsilon' = \epsilon'(\epsilon)$, satisfying 
\begin{equation}
 \label{eq:est1}
 w_{3} \in C_{b}^{\infty} (\mathbb R^{d}); \ 
  \|w_{3} - w_{2}\|_{\infty} \le \frac 1 4 \epsilon; \ \hbox{ and }
  \|w_{3} - w_{2}\|_{W^{2,\infty}(B_{r/2}(x))} \le \sqrt \epsilon.
\end{equation}
Indeed, $w_{2} - \epsilon \in C_{0} (\mathbb R^{d})$ ensures that 
$$\hbox{ As } 
\epsilon' \to 0, 
\ 
w_{3} = \eta_{\epsilon'} * w_{2} =  \eta_{\epsilon'} *  (w_{2} - \epsilon) + \epsilon
\to w_{2} \ \hbox{ uniformly  on } \mathbb R^{d}.$$
Moreover, due to $w_{2} \in C^{\infty} (B_{r}(x))$,
 for any $\epsilon' < r/2$ and $y \in B_{r/2}(x)$, we have 
 $\partial_{x_{i}} w_{3} =  \eta_{\epsilon'} * \partial_{x_{i}} w_{2}$ and 
 $\partial_{x_{i}x_{j}} w_{3} =  \eta_{\epsilon'} * \partial_{x_{i}x_{j}} w_{2}$.
 This implies that
 $$\hbox{ As } 
\epsilon' \to 0, 
\ 
(Dw_{3}, D^{2}w_{3}) \to (Dw_{2}, D^{2}w_{2}),  \ \hbox{ uniformly  on } 
B_{r/2}(x).$$
This explains the existence of $\epsilon'$ satisfying \eqref{eq:est1}.
In addition, it also implies that
 \begin{equation}
 \label{eq:est05}
 \|w_{3} - w_{2}\|_{L^{1}(\nu_{x}, B_{r}^{c}(x))} 
 \le \frac 1 4 \epsilon \  \nu(B_{r}^{c}(x)).
\end{equation}
Moreover, we have, for any $y \in \mathbb R^{d}$
\begin{equation}
 \label{eq:01v5}
\begin{array}
 {ll}
  w_{3} (y) 
  & \ge w_{2}(y) - \frac 1 4 \epsilon \ge w_{1} (y) - \frac 1 4 \epsilon 
  \\ 
  & \displaystyle 
  \ge (\phi(y) + \sqrt \epsilon |y - x|^{2} - \frac 1 4 \epsilon) I_{\bar B_{r}(x)}(y) 
  + (\frac 3 4 \epsilon + u^{g}) I_{\bar B_{r}^{c}(x)}(y).
\end{array}
\end{equation}

\item Since $u^{g}$ is USC, there exists $x_{\epsilon}$ at which $u^{g} - w_{3}$ attains maximum over $\bar B_{r}(x)$. We denote
 $$x_{\epsilon} \in \arg\max_{\bar B_{r} (x) } (u^{g} - w_{3}), 
\hbox{ and }
\phi_{\epsilon} = w_{3} + (u^{g} - w_{3})(x_{\epsilon}).$$
We observe the following two useful estimations:
\begin{equation} \label{eq:est01}
(u_{g} - w_{3}) (x_{\epsilon})  \ge (u_{g} -w_{3}) (x) 
\ge (u_{g} -w_{2}) (x) - \frac 1 4 \epsilon
=  (u_{g} - \hat \phi) (x)  - \frac 1 4 \epsilon =  - \frac 1 4 \epsilon, 
\end{equation}
and 
\begin{equation}
 \label{eq:est02}
(u_{g} - w_{3}) (x_{\epsilon})  \le (u_{g} -w_{2}) (x_{\epsilon}) +  
\frac 1 4 \epsilon 
\le   (u_{g} - \hat \phi) (x_{\epsilon})  + \frac 1 4 \epsilon \le 
 -\sqrt \epsilon |x_{\epsilon} -x|^{2} 
+ \frac 1 4 \epsilon. 
\end{equation}
\end{enumerate}

Next, we shall verify that $\phi_{\epsilon}$ belongs to $J^{+}(u,x)$ and also
satisfies all conditions of Lemma \ref{p:02v6} as well. 
\begin{enumerate}
 \item $\phi_{\epsilon}$ is a constant shift of the smooth mollification $w_{3}$, and hence $\phi_{\epsilon} \in C^{\infty}(\mathbb R^{d})$ holds. Moreover, $\phi_{\epsilon} (x_{\epsilon}) = u_{g}(x_{\epsilon})$ is valid by its definition. In addition, we conclude 
 $\phi_{\epsilon} \in J^{+}(u, x_{\epsilon})$, since
\begin{itemize}
 \item  if
 $y\in \bar B_{r}(x)$, then 
 $(\phi_{\epsilon} -u_{g}) (y) = (u^{g} -w_{3})(x_{\epsilon}) - 
 (u^{g} - w_{3})(y) \ge 0$ since $x_{\epsilon}$ is maximum point of 
 $u^{g} -w_{3}$ on $B_{r}(x)$.
\item if
 $y\in B_{r}^{c}(x)$, then we have, by \eqref{eq:est01} and \eqref{eq:01v5}
 $$ 
\begin{array}
 {ll}
 (\phi_{\epsilon} -u_{g}) (y) &= (u^{g} -w_{3})(x_{\epsilon}) +
 ( - u^{g} + w_{3})(y) 
 \\&
 \ge (u^{g} -w_{3})(x_{\epsilon}) + \frac 3 4 \epsilon 
 \\&
 \ge \frac 1 2 \epsilon >0.
\end{array}
 $$
\end{itemize}
\item From \eqref{eq:est01} and \eqref{eq:est02}, we immediately write
$ -\sqrt \epsilon |x_{\epsilon} -x|^{2} 
+ \frac 1 4 \epsilon \le - \frac 1 4 \epsilon$ or equivalently
$
 |x_{\epsilon} - x|^{2} \le \frac 1 2 \sqrt \epsilon.$ This implies $\lim_{\epsilon \to 0} x_{\epsilon} = x$.
 \item If $y\in B_{r}(x)$, then \eqref{eq:est01} and \eqref{eq:est02} again implies that $\phi_{\epsilon}$ is a constant shift from $w_{3}$ with
 $$|\phi_{\epsilon}(y) - w_{3}(y)| < \frac 1 4 \epsilon.$$
 Together with \eqref{eq:est2}, \eqref{eq:est03}, \eqref{eq:est04}, and 
 \eqref{eq:est1}, we obtain
 $$\|\phi_{\epsilon} - w\|_{W^{2,\infty}(B_{r/2}(x))} \le \sqrt \epsilon (r^{2} + 
 2rd + 2d + 1) + \frac 1 4 \epsilon \to 0, \hbox{ as } \epsilon \to 0.$$
 \item Finally, we shall check 
 $\|\phi_{\epsilon} - w\|_{L^{1}(\nu_{x}, B_{r}^{c}(x))}  \to 0$.  First,
 we write from definition of $\phi_{\epsilon}$ that
 $$\|\phi_{\epsilon} - w\|_{L^{1}(\nu_{x}, B_{r}^{c}(x))} 
 \le \|w_{3} - w \|_{L^{1}(\nu_{x}, B_{r}^{c}(x))}  + 
 | (u^{g} - w_{3})(x_{\epsilon})| 
 \cdot \nu(B_{r}^{c}).
$$
The first term $\|w_{3} - w \|_{L^{1}(\nu_{x}, B_{r}^{c}(x))} \to 0$ holds 
due to \eqref{eq:est03}, \eqref{eq:est04}, and \eqref{eq:est05}.
The second term $| (u^{g} - w_{3})(x_{\epsilon})| 
 \cdot \nu(B_{r}^{c})\to 0$ holds due to \eqref{eq:est01} and \eqref{eq:est02}.
\end{enumerate}
We finish the proof by applying Proposition \ref{p:01v5}.
\end{proof}

\section{A proof of Perron's method} \label{sec:per}
In this section, we prove Lemma~\ref{l:perron1}, and
Proposition~\ref{p:pm} is the direct consequence of Lemma~\ref{l:perron1}.

\begin{proposition}
 \label{p:perron1}
 If $u$ and $v$ are both 
 subsolutions of \eqref{eq:pde11} - \eqref{eq:bd11},
 then the new function $\max\{u, v\}$ is also a subsolution 
 of \eqref{eq:pde11} - \eqref{eq:bd11}.
\end{proposition}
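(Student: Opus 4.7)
The plan is to verify the two requirements of Definition~\ref{d:03} directly for $w := \max\{u, v\}$, working with the equivalent Definition~\ref{d:01v5} (which is simpler since its test functions lie in $C_b^\infty$). First, $w \in USC(\bar O)$ because the pointwise maximum of two USC functions is USC, and on $O^c$ we have $u = v = g$, hence $w = g$ there. Thus the boundary portion of the subsolution definition is automatic, and only the interior viscosity inequality needs attention.

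The main step is to reduce testing against $w$ to testing against whichever of $u$ or $v$ realizes the maximum at the test point. The key auxiliary observation is the monotonicity of the USC extension: since
$$w I_{\bar O} + g I_{\bar O^c} \ge u I_{\bar O} + g I_{\bar O^c}$$
pointwise on $\mathbb R^d$, and the USC envelope operation $f \mapsto f^*$ preserves inequalities (as $f^*(x) = \limsup_{y\to x} f(y)$), we get $w^g \ge u^g$, and similarly $w^g \ge v^g$.

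Now fix $x \in O$ and $\phi \in J^+(w, x)$, and without loss of generality assume $u(x) \ge v(x)$ so that $w(x) = u(x)$. Then $\phi(x) = w(x) = u(x)$ together with $\phi \ge w^g \ge u^g$ shows $\phi \in J^+(u, x)$. Applying the subsolution property of $u$ at $x$ yields $F(\phi, x) + u(x) - \ell(x) \le 0$, and substituting $w(x) = u(x)$ produces exactly the inequality required of $w$. The case $v(x) > u(x)$ is symmetric.

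I do not anticipate any serious obstacle: the only point demanding attention is the envelope comparison $w^g \ge u^g$, which reduces to the monotonicity of $\limsup$. In particular, no appeal to Definition~\ref{d:11}, Proposition~\ref{p:01v6}, or to the continuity of $\mathcal I$ from Lemma~\ref{p:02v5} is needed, because Definition~\ref{d:01v5} already permits the same $C_b^\infty$ test function $\phi$ to serve simultaneously as a supertest function for $u$ at $x$.
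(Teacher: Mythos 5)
Your proof is correct. Note, however, that the paper does not actually prove this proposition: it simply defers to Theorem~2 of \cite{BI08}, so your argument is a genuinely self-contained alternative. Your route works precisely because Definition~\ref{d:01v5} places the \emph{entire} operator, including the nonlocal part $\mathcal I(\phi,x)$, on the globally defined test function $\phi\in C_b^\infty$; once one observes that the envelope operation is monotone, so that $w^g\ge u^g$ for $w=\max\{u,v\}$, the inclusion $J^+(w,x)\subset J^+(u,x)$ (when $w(x)=u(x)$) is immediate and the inequality for $w$ is literally the inequality for $u$. Had you instead worked with the BI08-style Definition~\ref{d:11}, an extra step would be required: there the term $\mathcal I_{r,2}(u^g,x)$ is evaluated on the solution's own extension rather than on $\phi$, and one must additionally check that $\mathcal I_{r,2}(w^g,x)\ge\mathcal I_{r,2}(u^g,x)$ when $w^g$ and $u^g$ agree at $x$ (which follows from $w^g\ge u^g$ and the sign of the integrand); that monotonicity is essentially what the cited Theorem~2 of \cite{BI08} exploits. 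So your proof is the more elementary of the two, at the mild cost of relying on the equivalence of the two definitions (Proposition~\ref{p:01v6}) if one insists that ``subsolution'' ultimately means Definition~\ref{d:11}. The only point I would make explicit is that $F(\phi,\cdot)$ is well defined for $\phi\in C_b^\infty(\mathbb R^d)$, which holds since the integrand of $\mathcal I(\phi,x)$ is $O(|y|^2)$ near the origin and bounded elsewhere.
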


The proof of Proposition  \ref{p:perron1} is referred to Theorem 2 of \cite{BI08}. Next, 
Proposition \ref{p:cp} and \ref{p:perron1} enables us to follow 
the same {\it bump construction} to as of Lemma 4.4 of \cite{CIL92},
which eventually
leads to Perron's method via Lemma \ref{l:perron1} in this below.

\begin{lemma}
 \label{l:perron1}
 Let $u$ be a subsolution of \eqref{eq:pde11} - \eqref{eq:bd11}, 
 and $u_{*}$ fail to be a supersolution at some $\hat x \in O$.
 Then, for any small enough $\kappa >0$, there exists a subsolution 
 $u_{\kappa}$ such that
 $$u_{\kappa}  \ge u(x); \ \sup_{O} (u_{\kappa} - u) >0; \  \hbox{ and }
 u_{\kappa}  = u \hbox{ on } B_{\kappa}(\hat x).$$
\end{lemma}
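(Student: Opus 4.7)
The strategy is the classical bump construction adapted to the integro-differential setting. Since $u_{*}$ fails to be a supersolution at $\hat x \in O$, Definition~\ref{d:01v5} furnishes a test function $\phi \in J^{-}(u_{*},\hat x)$ and $\epsilon>0$ with
$$F(\phi,\hat x) + u_{*}(\hat x) - \ell(\hat x) \le -\epsilon.$$
Lemma~\ref{p:02v5} gives continuity of $\mathcal I(\phi,\cdot)$ at $\hat x$; combined with the continuity of $H(\phi,\cdot,a)$ (uniform in $a\in[\underline a,\overline a]$) and of $\ell$, one obtains $r>0$ with $\bar B_{2r}(\hat x)\subset O$ such that
$$F(\phi,x) + \phi(x) - \ell(x) \le -\tfrac{\epsilon}{2} \qquad \text{for all } x\in \bar B_{2r}(\hat x).$$

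For parameters $\gamma,\delta>0$ to be tuned, I would introduce
$$\phi_{\delta,\gamma}(x) := \phi(x) + \delta - \gamma|x-\hat x|^{2},$$
trimmed by a smooth cutoff outside $B_{2r}(\hat x)$ so that $\phi_{\delta,\gamma}\in C^{\infty}_{b}(\mathbb R^{d})$. The quadratic perturbation changes $D\phi$, $D^{2}\phi$ and the non-local $\mathcal I(\phi,\cdot)$ only by amounts of order $\gamma$, uniformly on $B_{r}(\hat x)$, so for $\gamma$ sufficiently small the strict inequality is preserved,
$$F(\phi_{\delta,\gamma},x) + \phi_{\delta,\gamma}(x) - \ell(x) \le -\tfrac{\epsilon}{4} \qquad \text{for all } x\in B_{r}(\hat x).$$
Calibrating $\kappa<r$ and $\delta := \gamma\kappa^{2}/4$, one has $\phi_{\delta,\gamma} \le \phi \le u$ whenever $|x-\hat x|\ge\kappa/2$, whereas $\phi_{\delta,\gamma}(\hat x) = u_{*}(\hat x) + \delta > u_{*}(\hat x)$.

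Then I would define
$$u_{\kappa}(x) := \max\{u(x),\phi_{\delta,\gamma}(x)\} \text{ for } x\in \bar O\cap \bar B_{\kappa}(\hat x), \quad u_{\kappa}(x) := u(x) \text{ otherwise.}$$
Because the two prescriptions agree on the annulus $\{\kappa/2\le|x-\hat x|\le\kappa\}$, $u_{\kappa}\in \mathrm{USC}(\bar O)$ with $u_{\kappa}\ge u$, $u_{\kappa}=u$ outside $B_{\kappa}(\hat x)$, and in particular $u_{\kappa}=g$ on $O^{c}$. By the definition $u_{*}(\hat x) = \liminf_{y\to\hat x} u(y)$, there exist $x_{n}\to \hat x$ with $u(x_{n}) \to u_{*}(\hat x) = \phi(\hat x)$, hence $u_{\kappa}(x_{n}) - u(x_{n}) \ge \phi_{\delta,\gamma}(x_{n}) - u(x_{n}) \to \delta > 0$, so $\sup_{O}(u_{\kappa}-u)>0$. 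The viscosity subsolution property of $u_{\kappa}$ transfers via Proposition~\ref{p:perron1}: inside $B_{\kappa}(\hat x)$ it is the pointwise maximum of the subsolutions $u$ and $\phi_{\delta,\gamma}$, outside it coincides with $u$.

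The main obstacle is specific to the nonlocal term: when Definition~\ref{d:11} is tested on $u_{\kappa}$ at an interior point where the bump is active, the integral $\mathcal I_{r,2}(u_{\kappa}^{g},x)$ sees the entire extension $u_{\kappa}^{g}$ on $\mathbb R^{d}$, not just the smooth $\phi_{\delta,\gamma}$, so one cannot naively substitute $\phi_{\delta,\gamma}$ into the original inequality. This is resolved by the equivalence Proposition~\ref{p:01v6} combined with Proposition~\ref{p:perron1} (Theorem~2 of \cite{BI08}), which handles exactly the non-local overlap of extensions when taking the pointwise maximum of subsolutions. A secondary technical point is the clean decoupling of $\gamma$ (which controls the perturbation of $F$) from $\delta$ (which controls the size of the upward bump): the choice $\delta=\gamma\kappa^{2}/4$ keeps $\phi_{\delta,\gamma}$ below $u$ on $\{|x-\hat x|=\kappa\}$ while ensuring the strict subsolution inequality survives the perturbation.
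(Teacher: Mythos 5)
Your proposal is correct and follows essentially the same route as the paper: take $\phi\in J^{-}(u_{*},\hat x)$ witnessing the failure of the supersolution property, add a small quadratic bump whose effect on $H$ and on $\mathcal I$ is of order $\gamma$ (controlled via Lemma~\ref{p:02v5}), use continuity of $F(\phi,\cdot)+\phi-\ell$ to preserve the strict inequality on a small ball, and glue by taking the maximum with $u$ via Proposition~\ref{p:perron1}. The only differences are cosmetic (a smooth cutoff and the calibration $\delta=\gamma\kappa^{2}/4$ in place of the paper's $\gamma(\kappa^{2}-|x|^{2})I_{B_{2\kappa}}$), and your reading of the conclusion as $u_{\kappa}=u$ \emph{outside} $B_{\kappa}(\hat x)$ matches what the paper's own construction actually proves.
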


\begin{proof}
 For simplicity $\hat x = 0$ and there exists $\phi \in J^{-} (u_{*}, 0)$ such that 
 $$\hat F(\phi, 0) := F(\phi, 0) + \phi(0) - \ell(0) = - \epsilon <0.$$
 Since $\hat F(\phi, \cdot)$ is continuous, there exists $\kappa_{0}>0$ such that 
 $$\sup_{x \in B_{\kappa_{0}}} \hat F(\phi, x) < - \frac{\epsilon}{2}.$$ 
 We fix arbitrary $\kappa <\kappa_{0}$. Let $u_{\gamma}$ be a function of
 $$
 u_{\gamma} (x) = \phi(x) + 
 \gamma(\kappa^{2} - |x|^{2}) I_{B_{2\kappa}}(x) := \phi(x) + \psi_{\kappa}(x).$$
 If $x\in B_{\kappa}$, then we have
\begin{enumerate}
 \item  $$H(u_{\gamma}, x, a) = 
 H(u_{\gamma}, x, a)  - \gamma(tr(A(a)) + b(a) \cdot x)
 \ge 
 H(u_{\gamma}, x, a)  - \gamma c_{\kappa, 1},$$
 where $c_{\kappa, 1}$ is a number defined by
 $c_{\kappa, 1} :=  \sup_{x\in B_{\kappa}, a\in [\underline a, \overline a]}
 | tr(A(a)) + b(a) \cdot x | <\infty$. This means
 $$- \inf_{a\in [\underline a, \overline a]} H(u_{\gamma}, x, a) 
 \le - \inf_{a\in [\underline a, \overline a]} H(\phi, x, a)  + \gamma c_{\kappa, 1}.$$
\item  On the other hand, we also have
 $$- \mathcal I(u_{\gamma}, x) 
 = -  \mathcal I(\phi, x) + \gamma \mathcal I(\psi_{\kappa}, x)
 \le -  \mathcal I(\phi, x) + \gamma c_{\kappa, 2},$$
 where 
 $c_{\kappa, 2} := \sup_{x\in B_{\kappa}} |\mathcal I(\psi_{k}, x) |
 < \infty$ holds due to the continuity of $\mathcal I(\psi_{\kappa}, \cdot)$,
 see Proposition \ref{p:02v5}.
\end{enumerate}
 Therefore, we conclude that, with $c_{\kappa} := c_{\kappa, 1} + c_{\kappa, 2}$
 $$\hat F(u_{\gamma}, x) 
 \le F(\phi, x) + \gamma c_{\kappa} + \phi(x) - \ell(x)
 = \hat F(\phi, x) + \gamma c_{\kappa}.$$
 Now we take $\gamma = \frac{\epsilon}{2 c_{\kappa}}$ and we have $u_{\gamma}$ be a subsolution on $B_{\kappa}$. Then,  we have
\begin{enumerate}
 \item  if $
 x\in B_{\kappa},$
 then 
 $$u_{\gamma}(x)  = \phi(x) + \gamma (\kappa^{2} - |x|^{2}) I_{B_{2\kappa}} (x) \le \phi(x) \le u_{*}(x) \le u(x),$$
 \item and $u_{\gamma}(0) = \phi(0) + \gamma \kappa^{2} >\phi(0) = u_{*}(0)$ implies that there exists $x_{n} \to 0$ such that
 $u_{\gamma}(x_{n}) > u(x_{n})$.
\end{enumerate}
Finally, we take $u_{\kappa} = \max\{u_{\gamma}, u\}$ to finish the proof
by Proposition \ref{p:perron1}.
\end{proof}

\section{Skorohod metric in c\`adl\`ag space} \label{sec:skorohod}
We denote by $\mathbb D^{d}_{t}$ the collection of c\`adl\`ag functions on $[0, t)$ taking values in $\mathbb R^{d}$. In particular, $\mathbb D^{d}_{\infty}$ is the collection of  c\`adl\`ag functions on $[0, \infty)$.
According to \cite{Bil99}, one can impose Skorohod metric $d^{o}_{t}$ in the space $\mathbb D^{d}_{t}$ as of below to make the space complete. It is proven in \cite{Bil99} that, 
$\mathbb D_{t}^{d}$ (resp. $\mathbb D_{\infty}^{d}$) is complete under the metric $d_{t}^{o}$ (resp. $d_{\infty}^{o}$), which is equivalent to J1 Skorohod metric.
\begin{enumerate}
 \item For  $t\in [0, \infty)$, we define 
 the sup norm 
\begin{equation}
 \label{eq:supnorm}
\|x\| = \sup_{0\le s < t} |x(t)|.
\end{equation}

 \item For  $t\in [0, \infty)$, we
  denote by $\Lambda_{t}$
 by the class of strictly increasing continuous mappings of $[0, t]$ onto
 itself. In particular, $\lambda(0) = 0$ and $\lambda(t) = t$ for all
 $\lambda \in \Lambda$. The identity $I$ on $[0,t]$ also 
 belongs to $\Lambda_{t}$. We can define a functional in 
 $\Lambda_{t}$ by
 $$\|\lambda\|^{o} = \sup_{0\le s< r\le t} \Big | \log \frac{\lambda\circ r - 
 \lambda \circ s}{r - s} \Big|, \ \forall \lambda \in \Lambda_{t}.$$
 Note that $\|\lambda\|^{o}$ may not be necessarily finite in $\Lambda_{t}$.
 \item  For  $t\in [0, \infty)$, define the distance function $d_{t}^{o}(x,y)$ in $\mathbb D^{d}_{t}$ by
 $$d_{t}^{o}(x,y) = \inf_{\lambda\in \Lambda_{t}}
 \{\|\lambda \|^{o} 
 \vee \|x - y \circ\lambda\|\},
 \ \forall x,y \in \mathbb D^{d}_{t}.$$
 \item We define the distance function $d_{\infty}^{o}(x,y)$ in
  $\mathbb D^{d}_{\infty}$ by
  $$d^{o}_{\infty} (x,y) = \sum_{m=1}^{\infty} 2^{-m} (1 \wedge d^{o}_{m}(x^{m}, y^{m}))
   \ \forall x,y \in \mathbb D^{d}_{\infty},$$
   where $x^{m}(t) = g_{m}(t) x(t)$ for all $t\ge 0$ with a continuous function
   $g_{m}$ given by
   $$g_{m}(t) = \left\{ 
\begin{array}
 {ll}
 1, & \hbox{ if } t\le m-1, \\
 m-t, & \hbox{ if } m-1 \le t \le m, \\
 0, & \hbox{ otherwise. }
\end{array}
\right.
$$
\end{enumerate}

Define a projector $\Pi: \mathbb D^{d}_{\infty} \times [0, \infty) \mapsto \mathbb R^{d}$ by
\begin{equation}
 \label{eq:pi02}
 \Pi(\omega, t) = \omega(t).
\end{equation}

\begin{proposition}
 \label{p:pi01}
 $\omega \mapsto \Pi(\omega, t)$ is continuous at $\omega_{0}$ 
 if $t\mapsto \omega_{0}(t)$ is continuous at $t$.
\end{proposition}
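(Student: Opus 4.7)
The plan is to unwind the definition of $d^o_\infty$ via its characterization in terms of time changes (Theorem 16.1 of \cite{Bil99}, used earlier in the paper) and then exploit the continuity of $\omega_0$ at $t$ to pass a limit through a small time-reparametrization error. In more detail, suppose $d^o_\infty(\omega_n,\omega_0)\to 0$. Then there exist $\lambda_n\in\Lambda_\infty$ with
\[
\|\lambda_n - I\|\to 0 \quad\text{and}\quad \|\omega_n\circ\lambda_n-\omega_0\|_m\to 0\ \text{ for every }m\in\mathbb{N}.
\]
From $\|\lambda_n-I\|\to 0$ it follows that $\|\lambda_n^{-1}-I\|\to 0$ as well, so the sequence $s_n:=\lambda_n^{-1}(t)$ satisfies $s_n\to t$.

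Next I would use the decomposition
\[
|\omega_n(t)-\omega_0(t)|
= |\omega_n(\lambda_n(s_n))-\omega_0(t)|
\le \|\omega_n\circ\lambda_n-\omega_0\|_m + |\omega_0(s_n)-\omega_0(t)|,
\]
valid for any $m$ with $s_n\le m$ for all large $n$ (which is available since $s_n\to t$). The first summand tends to $0$ by the second bullet of the Skorohod characterization. The second summand tends to $0$ by the assumed continuity of $\omega_0$ at $t$, since $s_n\to t$. Therefore $\omega_n(t)\to\omega_0(t)$, which is the claimed continuity of $\Pi(\cdot,t)$ at $\omega_0$.

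There is no real obstacle here; the only subtle point is that without continuity of $\omega_0$ at $t$ the argument fails, because the time change $\lambda_n$ shifts the evaluation point to $s_n\neq t$, and a jump of $\omega_0$ at $t$ would leave an uncontrolled residual in $|\omega_0(s_n)-\omega_0(t)|$. This is exactly the reason why Skorohod convergence only implies pointwise convergence at continuity points of the limit, and the proof above makes that mechanism explicit.
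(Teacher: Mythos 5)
Your proof is correct. The paper itself does not argue this at all: its ``proof'' is a one-line citation of Theorem 12.5 of Billingsley, which records precisely the fact that Skorohod convergence implies pointwise convergence at continuity points of the limit. What you have written is, in effect, a self-contained proof of that cited fact, built on the time-change characterization of $d^o_\infty$-convergence (Theorem 16.1 of Billingsley) that the paper already invokes in the proofs of Lemma~\ref{l:hit01} and Theorem~\ref{t:hit02}, so your argument is stylistically consistent with the rest of the paper. The decomposition
\[
|\omega_n(t)-\omega_0(t)|\le \|\omega_n\circ\lambda_n-\omega_0\|_m+|\omega_0(s_n)-\omega_0(t)|,\qquad s_n=\lambda_n^{-1}(t),
\]
is exactly the standard mechanism, and your observation that $\|\lambda_n^{-1}-I\|\to 0$ follows from $\|\lambda_n-I\|\to 0$ (so that $s_n\to t$ and one may take $m>t+1$) closes the only point that needs care. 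Your closing remark about why the statement fails at jump times of $\omega_0$ is accurate and is the content behind the restriction in the proposition. The only difference from the paper is that you prove what it cites; each approach is fine, yours being longer but self-contained.
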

\begin{proof}
 It's a consequence of Theorem 12.5 of \cite{Bil99}.
\end{proof}

Finally, we give two useful examples.

\begin{example}\label{e:01} 
 For simplicity, consider $O = (0, 1) \subset \mathbb R$. 
 \begin{itemize}
 \item $T_{O^{c}}$ is not upper semicontinuous at $\omega$ given by $$\omega(t) = |t - 1/ 2 |,$$
 which is illustrated in Figure~\ref{fig:1}
 since $ \lim_{n}T_{O^{c}}(\omega_{n}) = 3/2 > 1/2 = T_{O^{c}}(\omega)$ where $\omega_{n} = \omega+ 1/n$.
 \item $T_{O^{c}}$  is not   lower semicontinuous  at $\omega$ given by
 $$\omega(t) = (-t +  1 /3) I(t < 1 /3) + (-t +  2/ 3) I(t \ge 1/ 3),$$
 which is illustrated in Figure~\ref{fig:2}.
 In fact, setting $\omega_{n} = \omega - 1/n$, we have
 $\lim_{n} T_{O^{c}}(\omega_{n}) = 1/3 < 2/3 = T_{O^{c}}(\omega).$ 
 \end{itemize}
 
 \begin{figure}
\centering
\begin{minipage}{.5\textwidth}
  \centering
  \includegraphics[width=.7\linewidth]{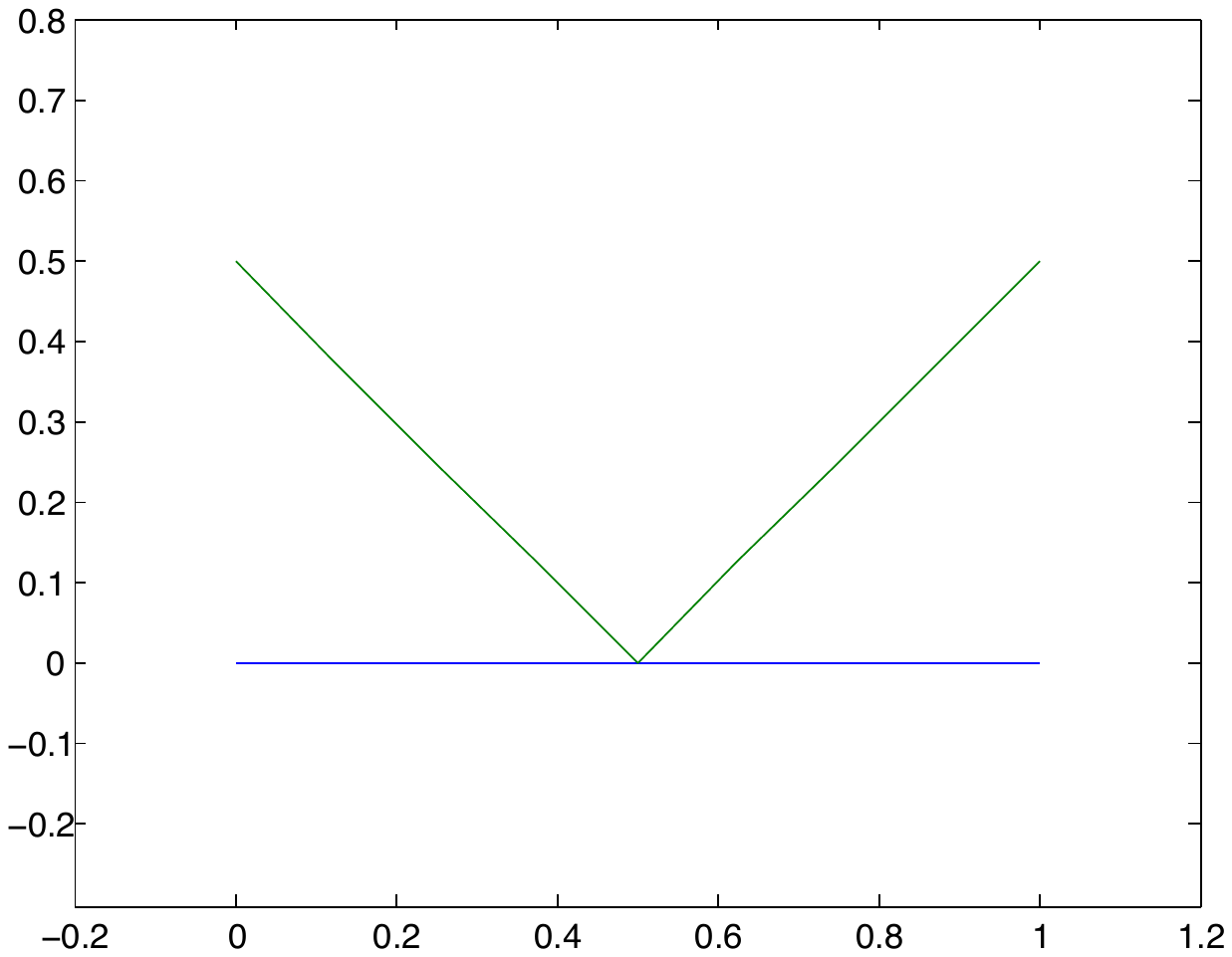}
  \caption{Shift up}
  \label{fig:1}
\end{minipage}%
\begin{minipage}{.5\textwidth}
  \centering
  \includegraphics[width=.8\linewidth]{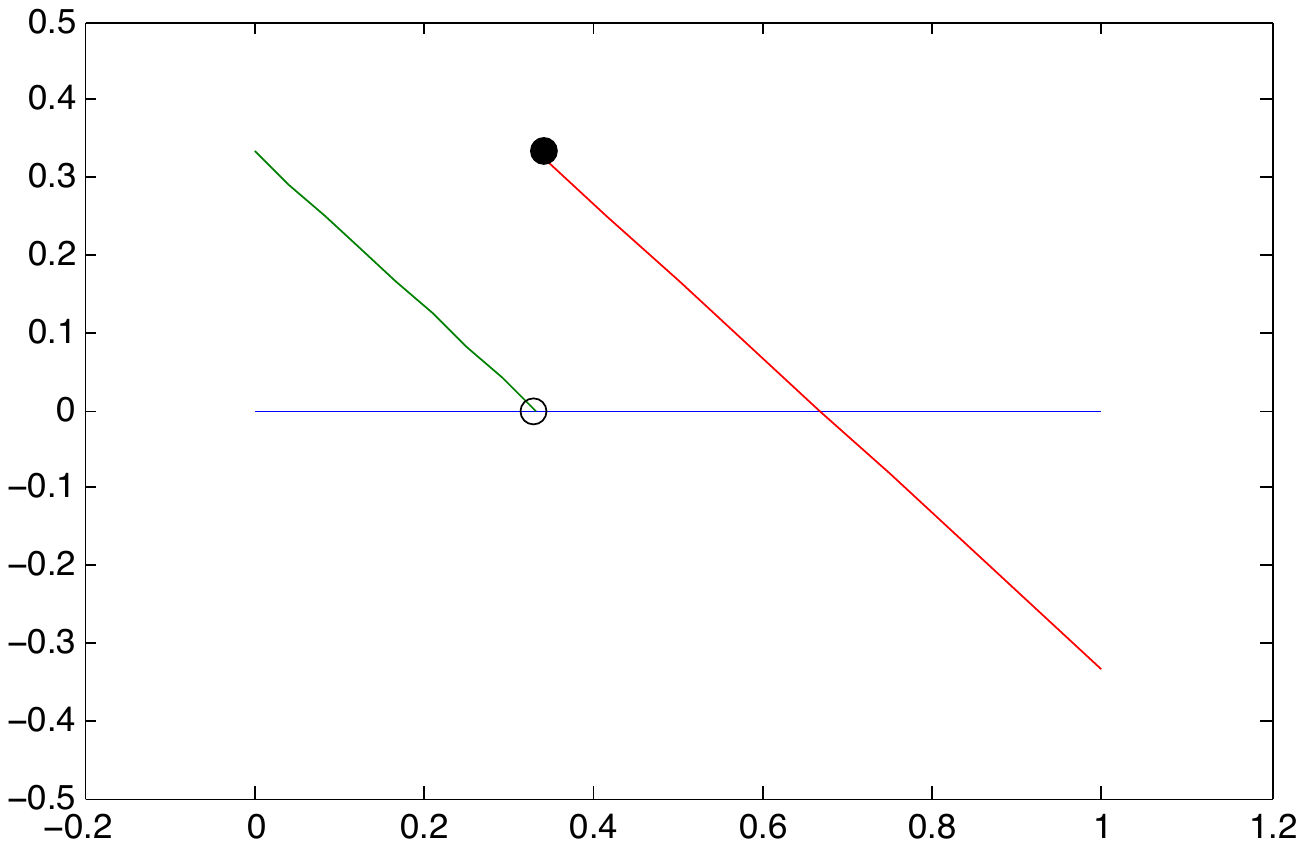}
  \caption{Shift down}
  \label{fig:2}
\end{minipage}
\end{figure}

\end{example}

\begin{example}
 \label{e:02v5}
 Let  $O = (0, 1)$ and
 $$\omega (t) = 1 - t - I(t\ge 1),$$
 which is illustrated in Figure~\ref{fig:3}.
 Since $\omega \in \Gamma_{O}$, 
 we have the continuity of $T_{O^{c}}$ at $\omega$ 
 by Theorem~\ref{t:hit01}.
 If we take $\omega_{n} = \omega - 1/n$ for all $n\in \mathbb N$, we have
 $\omega_{n} \to \omega$ in uniform topology, hence in Skorohod topology.
 Therefore, $T_{O^{c}}(\omega_{n})= 1- 1/n \to 1 = T_{O^{c}}(\omega)$, which
 supports Theorem~\ref{t:hit01}. However, we have 
 $$\Pi_{O}(\omega_{n}) = 0 \not\to -1 = \Pi_{O}(\omega).$$
 
  \begin{figure}
\centering
\includegraphics[scale = .5]{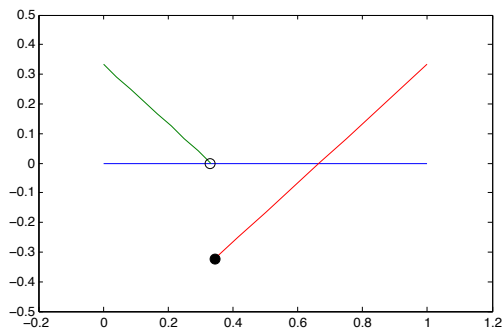} 
\caption{A small down shift makes a big change in the state at the 
first exit time}\label{fig:3}
\end{figure}

\end{example}

\section*{Acknowledgments}
Q. Song is grateful to Guy Barles and Pierre-Louis Lions for helpful comments.

\bibliographystyle{siamplain}
\bibliography{refs}
\end{document}